\numberwithin{equation}{section}
\newtheorem{thm}{Theorem}[section]
\newtheorem{hypA}{Hypothesis}
\newtheorem*{thm*}{Theorem}
\newtheorem{lm}[thm]{Lemma}
\newtheorem*{cor*}{Corollary}
\newtheorem{prop}[thm]{Proposition}
\newtheorem*{conj*}{Conjecture}
\newtheorem{conj}{Conjecture}
\theoremstyle{remark}
\newtheorem*{remark}{Remark}
\theoremstyle{definition}
\newtheorem*{defn*}{Definition}
\newtheorem{I_Remark*}{Remark}
\newcommand{\nc}{\newcommand}
\newcommand{\beq}{\begin{equation}}
\newcommand{\eeq}{\end{equation}}
\newcommand{\bpmx}{\begin{pmatrix}}
\newcommand{\epmx}{\end{pmatrix}}
\newcommand{\bbmx}{\begin{bmatrix}}
\newcommand{\ebmx}{\end{bmatrix}}
\def\parref#1{\ref{#1}}
\def\thmref#1{Theorem~\parref{#1}}
\def\propref#1{Proposition~\parref{#1}}
\def\lmref#1{Lemma~\parref{#1}}
\def\makeop#1{\expandafter\def\csname#1\endcsname
  {\mathop{\rm #1}\nolimits}\ignorespaces}
\def\makebb#1{\expandafter\def
  \csname bb#1\endcsname{{\mathbb{#1}}}\ignorespaces}
\def\makebf#1{\expandafter\def\csname bf#1\endcsname{{\bf
      #1}}\ignorespaces}
\def\makegr#1{\expandafter\def
  \csname gr#1\endcsname{{\mathfrak{#1}}}\ignorespaces}
\def\makescr#1{\expandafter\def
  \csname scr#1\endcsname{{\EuScript{#1}}}\ignorespaces}
\def\makecal#1{\expandafter\def\csname cal#1\endcsname{{\mathcal
      #1}}\ignorespaces}
\def\doLetters#1{#1A #1B #1C #1D #1E #1F #1G #1H #1I #1J #1K #1L #1M
                 #1N #1O #1P #1Q #1R #1S #1T #1U #1V #1W #1X #1Y #1Z}
\def\doletters#1{#1a #1b #1c #1d #1e #1f #1g #1h #1i #1j #1k #1l #1m
                 #1n #1o #1p #1q #1r #1s #1t #1u #1v #1w #1x #1y #1z}
    \def\setminus{\smallsetminus}
\def\cB{\EuScript B}
\def\cH{{\mathcal H}}
\def\cS{{\mathcal S}}
\def\cV{{\mathcal V}}
\def\sS{\mathscr S}
\def\bbI{\mathbb I}
\def\bbL{\mathbb L}
\def\ot{\otimes}
\def\hookto{\hookrightarrow}
\def\longto{\longrightarrow}
\def\ol{\overline}  \nc{\opp}{\mathrm{opp}} \nc{\ul}{\underline}
\def\XYmatrix{\xymatrix@M=8pt} 
\def\ncmd{\newcommand}
\ncmd{\xysubset}[1][r]{\ar@<-2.5pt>@{^(-}[#1]\ar@<2.5pt>@{_(-}[#1]}
\ncmd{\XYmatrixc}[1]{\vcenter{\XYmatrix{#1}}}
\ncmd{\xyto}[1][r]{\ar@{->}[#1]}
\ncmd{\xyinj}[1][r]{\ar@{^(->}[#1]}
\ncmd{\xysurj}[1][r]{\ar@{->>}[#1]}
\ncmd{\xyline}[1][r]{\ar@{-}[#1]}
\ncmd{\xydotsto}[1][r]{\ar@{.>}[#1]}
\ncmd{\xydots}[1][r]{\ar@{.}[#1]}
\ncmd{\xyleadsto}[1][r]{\ar@{~>}[#1]}
\ncmd{\xyeq}[1][r]{\ar@{=}[#1]} \ncmd{\xyequal}[1][r]{\ar@{=}[#1]}
\ncmd{\xyequals}[1][r]{\ar@{=}[#1]}
\ncmd{\xymapsto}[1][r]{l\ar@{|->}[#1]}\ncmd{\xyimplies}[1][r]{\ar@{=>}[#1]}
\ncmd{\xyiso}{\ar[r]_-{\sim}}
\def\injxy{\ar@{^(->}}
\newcommand{\pMX}[4]{\begin{pmatrix}
{#1}& {#2}\\
{#3}&{#4}\end{pmatrix} }
\newcommand{\seesaw}[4]{{#1}\ar@{-}[rd]\ar@{-}[d]&{#2}\ar@{-}[d]\\
{#3}\ar@{-}[ru]&{#4}}
\def\x{{\times}}
\def\e{\varepsilon} 
\newcommand\stt[1]{\left\{#1\right\}}
\renewcommand\pmod[1]{\,(\mbox{mod }{#1})}
\renewcommand\Re{\text{Re}\,}
\newcommand{\BigWedge}{\mathord{\adjustbox{valign=B,totalheight=.6\baselineskip}{$\bigwedge$}}}
\def\SO{{\rm SO}}
\def\GL{{\rm GL}}
\def\Mat{{\rm Mat}}
\def\b{\bar}
\def\t{\tilde}
\def\h{\hat}
\def\a{\alpha}
\def\la{\lambda}
\def\Wedge{\BigWedge}
\title{Rankin-Selberg integrals for $\SO_{2n+1}\x\GL_r$ attached to Newforms and Oldforms}
\author{Yao Cheng}
\subjclass[2020]{11F70, 22E50}
\keywords{Newfrom, Oldforms, Rankin-Selberg integrals, Odd special orthogonal groups.}
\address{No. 151, Yingzhuan Road, Tamsui District, New Taipei City 251, Taiwan (R.O.C),  Lui-Hsien Memorial 
Science Hall.}
\email{briancheng@mail.tku.edu.tw}
\begin{document}
\maketitle

\begin{abstract}
The conjectural newform theory for generic representations of $p$-adic $\SO_{2n+1}$ was formulated by P.-Y. Tsai in 
her thesis in which Tsai also verified the conjecture when the representations are supercuspidal. 
The main purpose of this work is to compute the Rankin-Selberg integrals for $\SO_{2n+1}\x\GL_r$ with $1\le r\le n$ 
attached to newforms and also oldforms under the validity of the conjecture.
\end{abstract}

\section{Introduction}
The theory of newforms was first developed by Atkin-Lehner (\cite{AtkinLehner1970}) in the context of modular forms, 
and then by Casselman (\cite{Casselman1973}) in the context of generic representations of $p$-adic $\GL_2$. 
In the modular form setting, newforms are cusp forms of level $\Gamma_0(N)$ which are simultaneously 
eigenfunctions of all Hecke operators. Consequently, their Fourier coefficients satisfy strong recurrence relations and 
hence the associated $L$-functions admit an Euler product expansion. 
One also has the notion of oldforms. These are cusp forms obtained from newforms of lower level via certain level 
raising procedures.\\

In the local $p$-adic setting, we are given an irreducible smooth generic (complex) representation $(\pi,\cV_\pi)$ 
of $\GL_2$ over a $p$-adic field $F$, and a descending family of open compact 
subgroups $\Gamma_0(\frak{p}^m)$ of $\GL_2(F)$ indexed by integers $m\ge 0$ with 
$\Gamma_0(\frak{o})=\GL_2(\frak{o})$. Here $\frak{o}$ is the valuation ring of $F$ and $\frak{p}$ is the maximal ideal
of $\frak{o}$. Then Casselman proved that there exists $a_\pi\ge 0$ such that $\cV_\pi^{\Gamma_0(\frak{p}^m)}=0$ if
 $0\le m<a_\pi$ and ${\rm dim}_\bbC\cV_\pi^{\Gamma_0(\frak{p}^{a_\pi})}=1$. Nonzero elements in 
$\cV_\pi^{\Gamma_0(\frak{p}^{a_\pi})}$ are called newforms of $\pi$. Oldforms are elements in 
$\cV_\pi^{\Gamma_0(\frak{p}^m)}$ with $m>a_\pi$, and can be obtained from newforms via certain 
level raising operators. Casselman's results were subsequently extended to generic representations of $\GL_r$ by 
Jacquet, Piatetski-Shapiro and Shalika (\cite{JPSS1981}, \cite{Jacquet2012}, see also \cite{Matringe2013}) and 
Reeder (\cite{Reeder1991}). Since $\Gamma_0(\frak{o})=\GL_2(\frak{o})$ (which also holds for $r>2$), newforms can 
be viewed as a generalization of spherical elements in unramified representations to arbitrary generic representations. 
In addition to this, they also possess the following important features which lead to their applications in 
number theory and representation theory. First, the integer $a_\pi$ is encoded in the exponent of the $\epsilon$-factor of 
$\pi$. Second, if one computes the zeta integrals in \cite{JLbook}, \cite{JPSS1983} attached to newforms, then one 
obtains the $L$-factor of $\pi$ (\cite{Matringe2013}, \cite{Miyauchi2014}). In particular, the Whittaker functional 
of $\pi$ is nontrivial on the space of newforms. And third, explicit formulae for Whittaker functions associated to newforms 
on the diagonal matrices in $\GL_r(F)$ can be obtained (\cite{Shintani1976}, \cite{Schmidt2002}, \cite{Miyauchi2014}).\\

In \cite{RobertsSchmidt2007}, Roberts-Schmidt generalized Casselman's results to generic representations of 
${\rm GSp}_4$ in the same spirit. Their results are based on what they called the $paramodular$ $subgroups$ 
$K(\frak{p}^m)$. These are open compact subgroups of ${\rm GSp}_4(F)$ indexed by integers $m\ge 0$ with 
$K(\frak{o})={\rm GSp}_4(\frak{o})$. Note that; however, paramodular subgroups do not form a descending chain. 
Now let $(\pi,\cV_\pi)$ be an irreducible smooth $generic$ representation of ${\rm GSp}_4(F)$ with $trivial$ 
central character. Roberts-Schmidt proved that there exists an integer $a_\pi\ge 0$ such that the spaces 
$\cV_\pi^{K(\frak{p}^{m})}=0$ if $0\le m<a_\pi$ and ${\rm dim}_\bbC\cV_\pi^{K(\frak{p}^{a_\pi})}=1$. 
Again, nonzero elements in $\cV_\pi^{K(\frak{p}^{a_\pi})}$ are called newforms of $\pi$, and 
elements in $\cV_\pi^{K(\frak{p}^m)}$ with $m>a_\pi$ are called oldforms. They also showed that the spaces
$\cV_{\pi}^{K(\frak{p}^m)}$ (with $m>a_\pi$) admit the following bases
\begin{equation}\label{E:oldform}
\theta'^i\theta^j\eta^k (v_\pi)\quad\text{for integers $i,j,k\geq 0$ with $i+j+2k=m-a_\pi$}.
\end{equation} 	
Here $v_\pi$ is a newform of $\pi$ and $\theta, \theta':\cV_\pi^{K(\frak{p}^m)}\to\cV_\pi^{K(\frak{p}^{m+1})}$ and 
$\eta:\cV_\pi^{K(\frak{p}^m)}\to\cV_\pi^{K(\frak{p}^{m+2})}$ are the level raising operators defined in their book,
see also \S\ref{SSS:level raising}. In particular, the dimension of $\cV_\pi^{K(\frak{p}^m)}$ can be easily computed.\\

Roberts-Schmidt in addition computed Novodvorsky's zeta integrals for ${\rm GSp}_4\x\GL_1$ (\cite{Novodvorsky1979}) 
attached to newforms and also oldforms. They showed that 
\begin{equation}\label{E:zeta newform n=2}
Z(s,v_\pi)=\Lambda_{\pi,\psi}(v_\pi)L(s,\pi)\neq 0
\end{equation}
and 
\begin{equation}\label{E:zeta oldform n=2}
Z(s,\theta(v))=q^{-s+\frac{3}{2}}Z(s,v),
\quad
Z(s,\theta'(v))=qZ(s,v),
\quad
Z(s,\eta(v))=0
\end{equation}
for $v\in\cV_\pi^{K(\frak{p}^m)}$ (any $m\ge 0$). Here $q$ is the cardinality of the residue field of $F$,
$\Lambda_{\pi,\psi}$ is the Whittaker functional on $\pi$ (depending on an additive character $\psi$ of $F$), 
$Z(s,v)$ is the Novodvorsky's zeta integral attached to $v$, and $L(s,\pi)$ is the $L$-factor of $\pi$ defined by 
Novodovorsky's local integrals. In particular, \eqref{E:zeta newform n=2} implies that the Whittaker functional is 
nontrivial on the space of newforms, and one can use \eqref{E:oldform}, \eqref{E:zeta newform n=2} and 
\eqref{E:zeta oldform n=2} to compute $Z(s,v)$ for every $paramodular$ $vector$ $v$ of $\pi$, i.e. $v$ is 
contained in $\cV_\pi^{K(\frak{p}^m)}$ for some $m$. The integer $a_\pi$ also has meaning. It appears in the 
exponent of the $\epsilon$-factor $\epsilon(s,\pi,\psi)$ of $\pi$ defined by Novodvorsky's zeta integrals.\\

By the accidental isomorphisms $\SO_3\cong{\rm PGL_2}$ and $\SO_5\cong{\rm PGSp}_4$, results of
Casselman and Roberts-Schmidt can be viewed as the theory of newforms for $\SO_3$ and $\SO_5$ respectively. 
In her Harvard thesis (\cite{Tsai2013}), Tsai defined a family of open compact subgroups $K_{n,m}$ 
(cf. \S\ref{SS:paramodular subgroup}) of (split) $\SO_{2n+1}(F)$ such that $K_{1,,m}\cong\Gamma_0(\frak{p}^m)$ 
and $K_{2,m}\cong K(\frak{p}^m)$, and extended the theory of newforms to irreducible smooth generic $supercuspidal$ 
representations of $\SO_{2n+1}(F)$ (for all $n$). Based on this, Tsai then proposed a conjectural newform theory for 
generic representations of $\SO_{2n+1}(F)$ in \cite{Tsai2013}, \cite{Tsai2016}. We will review the conjecture in more 
details in the next subsection (modulo the definition of $K_{n,m}$). The local conjecture has a global counterpart, 
which was formulated by Gross (\cite{Gross2015}) with an eye toward giving a refinement of the global Langlands 
correspondence for discrete symplectic motives of rank $2n$ over $\bbQ$.\\ 

The aim of this work is to investigate the newform conjecture and compute the (local) Rankin-Selberg integrals 
for $\SO_{2n+1}\x\GL_r$ with $1\le r\le n$ attached to (conjectural) newforms and oldforms in generic 
representations of $\SO_{2n+1}(F)$ under the Hypothesis \ref{H} stated in \S\ref{SS:main}. 
These Rankin-Selberg integrals were developed by Gelbart and Piatetski-Shapiro (\cite{GPSR1987}), Ginzburg 
(\cite{Ginzburg1990}) and Soudry (\cite{Soudry1993}), and had already played an important role in Tsai's work. 
Now let us describe our results in more details.

\subsection{Newform conjecture}
Let $G_n=\SO_{2n+1}$ be the split odd special orthogonal of rank 
$n\ge 1$ defined over $F$ and $U_n\subset G_n$ be the upper triangular maximal unipotent subgroup.
Let $\psi$ be an additive character of $F$ with $\ker(\psi)=\frak{o}$. Then we have a non-degenerate character 
$\psi_{U_n}: U_n(F)\to\bbC^\x$ (cf. \S\ref{SSS:SO_{2n+1}}). Let $(\pi,\cV_\pi)$ be an irreducible smooth generic 
representation of $G_n(F)$. We fix a nonzero Whittaker functional $\Lambda_{\pi,\psi}\in 
{\rm Hom}_{U_n(F)}(\pi,\psi_{U_n})$. By the results of Jiang and Soudry (\cite{JiangSoudry2003}, 
\cite{JiangSoudry2004}), we can attach to $\pi$ an $L$-parameter $\phi_\pi$. Then after composing $\phi_\pi$ with
the inclusion $^LG_n^0(\bbC)={\rm Sp}_{2n}(\bbC)\hookto\GL_{2n}(\bbC)$, we get the $\epsilon$-factor 
$\epsilon(s,\phi_\pi,\psi)$ (\cite{Tate1979}) associated to $\phi_\pi$ and $\psi$, which can be written as
\begin{equation}\label{E:epsilon for pi}
\epsilon(s,\phi_\pi,\psi)=\varepsilon_\pi q^{-a_\pi(s-\frac{1}{2})}
\end{equation} 
for some $\varepsilon_\pi\in\stt{\pm 1}$ and an integer $a_\pi\ge 0$.
Since $\pi$ is generic, one has $\cV_\pi^{K_{n,m}}\neq 0$ for some $m\ge 0$ (cf. \lmref{L:existence}). 
Then we have the following conjecture due to Tsai (\cite[Conjecture 1.2.8]{Tsai2013}, \cite[Conjecture 7.8]{Tsai2016}).

\begin{conj}[Tsai]\label{C1}
\noindent
\begin{itemize}
\item[(1)]
The dimension of $\cV_\pi^{K_{n,m}}$ is given by 
\begin{equation}\label{E:dim formula}
{\rm dim}_\bbC\cV_\pi^{K_{n,m}}
=
\begin{pmatrix}
n+\lfloor\frac{m-a_\pi}{2}\rfloor\\n
\end{pmatrix}
+
\begin{pmatrix}
n+\lfloor\frac{m-a_\pi+1}{2}\rfloor-1\\n
\end{pmatrix}.
\end{equation}
In particular, $\cV_\pi^{K_{n,m}}=0$ if $0\le m<a_\pi$ and $\cV_\pi^{K_{n,a_\pi}}$ is one-dimensional.
\item[(2)]
The action of the quotient group $J_{n,a_\pi}/K_{n,a_\pi}$ on $\cV_\pi^{K_{n,a_\pi}}$ gives $\varepsilon_\pi$.
\item[(3)]
The Whittaker functional $\Lambda_{\pi,\psi}$ is nontrivial on $\cV^{K_{n,a_\pi}}_\pi$.
\end{itemize}
\end{conj}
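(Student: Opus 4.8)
The plan is to prove the three assertions in the order (1), (3), (2), using Tsai's theorem for generic supercuspidal $\pi$ as the base of an induction on the number of constituents in the cuspidal support of $\pi$, and otherwise following the template of Roberts--Schmidt for $\SO_5\cong{\rm PGSp}_4$ recalled above. First I would construct level-raising operators $\cV_\pi^{K_{n,m}}\to\cV_\pi^{K_{n,m+1}}$ and $\cV_\pi^{K_{n,m}}\to\cV_\pi^{K_{n,m+2}}$ generalizing the operators $\theta,\theta',\eta$ of Roberts--Schmidt --- each a finite average of translates over cosets of $K_{n,m}$ in a suitable larger group --- establish the commutation relations among them, and check the purely combinatorial fact that the number of monomials in these operators of the relevant weight matches the right-hand side of \eqref{E:dim formula}. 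Part (1) then splits into two halves: (i) such monomials applied to a generator $v_\pi$ of $\cV_\pi^{K_{n,a_\pi}}$ span $\cV_\pi^{K_{n,m}}$, and (ii) these monomials are linearly independent.

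Half (ii), which also yields the lower bound ${\rm dim}_\bbC\cV_\pi^{K_{n,a_\pi}}\ge1$ once \lmref{L:existence} is in hand, I would obtain by feeding the monomial vectors through the Rankin--Selberg integrals for $\SO_{2n+1}\times\GL_r$ of Gelbart--Piatetski-Shapiro, Ginzburg and Soudry: relations of the form $Z(s,\theta_i v)=c_i(s)Z(s,v)$ with $c_i(s)\neq0$, together with the filtration by the spaces $\cV_\pi^{K_{n,m}}$ and a pairing against the contragredient to handle the $\eta$-tower (on which $Z$ vanishes, cf. \eqref{E:zeta oldform n=2}), force independence. Half (i), the upper bound on ${\rm dim}_\bbC\cV_\pi^{K_{n,m}}$, is the serious obstacle. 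For supercuspidal $\pi$ it is Tsai's theorem. In general one would realize $\pi$ as a constituent of $\Ind_P^{G_n}\sigma$ for a proper parabolic $P$ with Levi $M$ and a generic representation $\sigma$ of $M(F)$ with shorter cuspidal support, and bound $\cV_\pi^{K_{n,m}}$ piece by piece along the double-coset space $K_{n,m}\backslash G_n(F)/P(F)$, using an Iwahori-type factorization of $K_{n,m}$ and the additivity of the conductor over the constituents of $\phi_\pi$. The difficulty is that the paramodular-type subgroups $K_{n,m}$ are not in good position relative to any parabolic --- there is no Iwasawa-style factorization --- so this double-coset geometry must be worked out explicitly; controlling it is precisely the role of \hypref{H} in the present paper, and proving such a structural input unconditionally is where the bulk of the effort would go.

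For part (3), that $\Lambda_{\pi,\psi}$ is nonzero on the line $\cV_\pi^{K_{n,a_\pi}}$, I would emphasize that this is a genuine test-vector statement, strictly stronger than the mere nonvanishing of the newform's Whittaker function: it asserts $W_{v_\pi}(1)\neq0$. I would prove it either by establishing an explicit formula for $W_{v_\pi}$ on the diagonal torus of $G_n(F)$, in the spirit of the Shintani--Miyauchi formulas for $\GL_r$, and reading off nonvanishing at the identity, or by showing that Whittaker-nontriviality on the newform propagates under parabolic induction from Tsai's supercuspidal case. Only once (3) is known does one get that the Rankin--Selberg integral attached to the newform is a \emph{nonzero} multiple of $L(s,\pi)$; I note that the zeta-integral computations carried out later in this paper presuppose the conjecture, so for a self-contained proof these identities for the newform would need to be re-derived without it.

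Finally, part (2) should follow formally once (1) and (3) are in place. Since $\cV_\pi^{K_{n,a_\pi}}$ is then one-dimensional, the normalizing element $J_{n,a_\pi}$ acts on it by a scalar $\epsilon_0\in\{\pm1\}$; substituting the newform into the local functional equation of the $\SO_{2n+1}\times\GL_1$ (or $\times\GL_n$) Rankin--Selberg integral, and using that the newform produces the complete $L$-factor on both sides while the $\epsilon$-factor equals $\varepsilon_\pi q^{-a_\pi(s-\frac12)}$ by \eqref{E:epsilon for pi}, comparison of the two sides forces $\epsilon_0=\varepsilon_\pi$. I expect this step to be the easiest; the crux of the whole program is the upper bound in part (1), i.e. the structural analysis underlying \hypref{H}.
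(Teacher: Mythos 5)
The statement you are attempting to prove is a conjecture, and the paper does not prove it either: one-dimensionality of $\cV_\pi^{K_{n,a_\pi}}$ (the key case of (1)) together with (3) are assumed wholesale as Hypothesis~\ref{H}, and what the paper establishes in Theorem~\ref{T:main}, conditional on that hypothesis, is only the lower bound \eqref{E:dim bd} in (1) and part (2). Your sketch correctly mirrors that architecture --- the lower bound via linear independence of level-raised vectors tested against Rankin--Selberg integrals, part (2) via the local functional equation applied to the newform --- and you correctly identify the upper bound in (1) and the test-vector statement (3) as the genuine obstacles; but you do not close them, and the passages in your proposal meant to address them (inducting on cuspidal support and bounding along $K_{n,m}\backslash G_n(F)/P(F)$ double cosets; deriving a Shintani-type formula for $W_{v_\pi}$) are headings rather than arguments. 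So this is a research program, not a proof of the conjecture.

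Two smaller corrections of framing. For general $n$ the level-raising structure is not literally monomials in three operators $\theta,\theta',\eta$: the paper uses the full spherical Hecke algebra $\cH(H_n(F)//R_{n,m})$ of $\SO_{2n}$, whose basis elements $\varphi_{\lambda,m}$ (indexed by dominant coweights $\lambda\in P^+_{H_n}$) act along the filtration \eqref{E:filtration}, supplemented by two parity-changing operators $\theta_m,\theta'_m$; the combinatorial count matching the right side of \eqref{E:dim formula} is a count of lattice points in $P^+_{H_n}$ and $P^+_{G_n}$, not of monomials in a handful of operators. And you cannot disentangle (1), (3) and (2) in the order you propose: the linear-independence argument for the lower bound in (1) already requires a choice of $v_\pi$ generating the (assumed one-dimensional) newform line with $\Lambda_{\pi,\psi}(v_\pi)=1$, so (3) and the $m=a_\pi$ case of (1) are used simultaneously from the start --- which is exactly why the paper packages them together as a single hypothesis rather than proving them in sequence.
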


Here $J_{n,m}\supset K_{n,m}$ is another family of open compact subgroups of $G_n(F)$ with
$J_{n,0}=K_{n,0}=G_n(\frak{o})$ and such that $[J_{n,m}:K_{n,m}]=2$ when $m>0$ 
(cf. \S\ref{SS:paramodular subgroup}).

\begin{remark}
Conjecture \ref{C1} holds when $n=1,2$ by the results of Casselman (\cite{Casselman1973}) and Roberts-Schmidt 
(\cite{RobertsSchmidt2007}), see also \lmref{L:conj for n=2}. On the other hand, suppose that $\pi$ is supercuspidal 
(any $n\ge 1$). Then in \cite{Tsai2013}, Tsai proved that \eqref{E:dim formula} holds for $0\le m\le a_\pi$. 
For $m>a_\pi$, she showed that ${\rm dim}_\bbC\cV_\pi^{K_{n,m}}$ is greater than or equal to the RHS of 
\eqref{E:dim formula}, and she conjectured that they should be equal. In addition, Conjecture \ref{C1} $(2)$ and $(3)$ 
for $\pi$ were also verified by Tsai.
\end{remark}

\begin{remark}
As far as we know, Tsai did not formulate the conjecture in this form (in the literature). In fact, she only gave partial 
conjectures in \cite[Conjecture 1.2.8]{Tsai2013} and \cite[Conjecture 7.8]{Tsai2016}. However, she did point out that all 
these are expected to be true for all $\pi$ in the same references. 
\end{remark}

\subsection{Rankin-Selberg integrals}
Let $Z_r\subset\GL_r$ be the upper triangular maximal unipotent subgroup and $\b{\psi}_{Z_r}:Z_r(F)\to\bbC^\x$ be a 
non-degenerate character (cf. \eqref{E:psi_Z}). Let $(\tau,\cV_\tau)$ be a smooth representation of $\GL_r(F)$ with 
finite length. We assume that the space ${\rm Hom}_{Z_r(F)}(\tau,\b{\psi}_{Z_r})$ is one-dimensional in which we fix a 
nonzero element $\Lambda_{\tau,\b{\psi}}$. Let $H_r=\SO_{2r}$ be the split even special orthogonal group defined over 
$F$ and $Q_r\subset H_r$ be a Siegel parabolic subgroup whose Levi subgroup is isomorphic to $\GL_r$. 
Given a complex number $s$, we have a normalized induced representation $\rho_{\tau,s}$ of 
$H_r(F)$ with the underlying space $I_r(\tau,s)$, which consists of smooth functions $\xi_s: H_r(F)\to\cV_\tau$ satisfying 
the usual rule (cf. \S\ref{SS:ind rep}). In this work, we always assume that $1\le r\le n$. 
Then $H_r(F)$ can be regarded as a subgroup of $G_n(F)$ via a natural embedding (cf. \eqref{E:embedding}) and we 
have the Rankin-Selberg integrals $\Psi_{n,r}(v\ot\xi_s)$ attached to $v\in\cV_\pi$ and $\xi_s\in I_r(\tau,s)$. 
(cf. \S\ref{SSS:RS integral and FE}). 

\subsection{Main results}\label{SS:main}
Let $\tau$ be an unramified representation of $\GL_r(F)$ that is induced 
of Langlands' type (cf. \S\ref{SS:Langlands type}). This includes all (classes of) irreducible smooth generic unramified 
representations of $\GL_r(F)$ and the space $\cV^{\GL_r(\frak{o})}_\tau$ is one-dimensional. 
Moreover, $\tau$ admits a unique (up to scalars) nonzero Whittaker functional $\Lambda_{\tau,\b{\psi}}$ 
which is nontrivial on $\cV_{\tau}^{\GL_r(\frak{o})}$ (\cite[Section 1]{Jacquet2012}). We fix an element 
$v_\tau\in\cV_\tau^{\GL_r(\frak{o})}$ with
\[
\Lambda_{\tau,\b{\psi}}(v_\tau)=1.
\]
Let $J(\tau)$ be the unique irreducible unramified quotient of $\tau$ (\cite[Corollary 1.2]{Matringe2013}) and 
$\phi_{J(\tau)}$ be the $L$-parameter of $J(\tau)$ under the local Langlands correspondence for $\GL_r$ 
(\cite{HarrisTaylor2001}, \cite{Henniart2000}, \cite{Scholze2013}).
Put $R_{r,m}=K_{n,m}\cap H_r(F)$ for $m\ge 0$. Then the properties of $R_{r,m}$ (cf. \S\ref{SS:R_r,m}) imply that 
the space $I_r(\tau,s)^{R_{r,m}}$ is one-dimensional which admits a generator $\xi^m_{\tau,s}$ with 
$\xi^m_{\tau,s}(I_{2r})=v_\tau$.\\

We impose the following hypothesis on $\pi$, which holds when (i) $n=1,2$ (any $\pi$) and (ii) $\pi$ is supercuspidal or 
unramified (any $n$).

\begin{hypA}\label{H}
The space $\cV_\pi^{K_{n,a_\pi}}$ is one-dimensional and $\Lambda_{\pi,\psi}$ is nontrivial on $\cV_{\pi}^{K_{n,a_\pi}}$.
\end{hypA}

Now our first result can be stated as follows.

\begin{thm}\label{T:main}
Under the Hypothesis \ref{H}, we have 
\begin{equation}\label{E:dim bd}
{\rm dim}_\bbC\cV_\pi^{K_{n,m}}
\ge
\begin{pmatrix}
n+\lfloor\frac{m-a_\pi}{2}\rfloor\\n
\end{pmatrix}
+
\begin{pmatrix}
n+\lfloor\frac{m-a_\pi+1}{2}\rfloor-1\\n
\end{pmatrix}
\end{equation}
for $m>a_\pi$ and the action of the quotient group $J_{n,a_\pi}/K_{n,a_\pi}$ on $\cV_\pi^{K_{n,a_\pi}}$ gives $\e_\pi$.
Moreover, if $v_\pi$ is an element in $\cV_\pi^{K_{n,a_\pi}}$ with $\Lambda_{\pi,\psi}(v_\pi)=1$, then we have
\footnote{After this work is completed, David Loeffler informed the author that he had obtained a similar identity (in 
his unpublished note) by computing the Novodvorsky's local integrals (\cite{Novodvorsky1979}) for 
${\rm GSp}_4\x\GL_2$ attached to newforms (of ${\rm GSp}_4$).}
\begin{equation}\label{E:main eqn}
\Psi_{n,r}(v_\pi\ot\xi^{a_\pi}_{\tau,s})
=
\frac{L(s,\phi_\pi\ot\phi_{J(\tau)})}{L(2s,\phi_{J(\tau)},\bigwedge^2)}
\end{equation}
provided that the Haar measures are properly chosen (cf. \S\ref{SSS:Haar}).
\end{thm}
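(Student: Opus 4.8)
The plan is to establish the three assertions in order of increasing difficulty, reusing the $n=1,2$ cases only as motivation. First, the claim that $J_{n,a_\pi}/K_{n,a_\pi}$ acts on the one-dimensional space $\cV_\pi^{K_{n,a_\pi}}$ by the sign $\e_\pi$ should follow from matching the action of a suitable Atkin-Lehner type element against the functional equation for the relevant Rankin-Selberg (or doubling) integral: since $\cV_\pi^{K_{n,a_\pi}}$ is one-dimensional by Hypothesis \ref{H}, the nontrivial coset of $J_{n,a_\pi}/K_{n,a_\pi}$ acts by a scalar that squares to $1$, and one pins down which sign by applying the global-to-local functional equation of $\epsilon(s,\phi_\pi,\psi)$ in \eqref{E:epsilon for pi} to the newform vector, exactly as Roberts-Schmidt do for $\GSp_4$. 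Here I would lean on the properties of $K_{n,m}$ and $J_{n,m}$ recorded in \S\ref{SS:paramodular subgroup} and the compatibility of $\Lambda_{\pi,\psi}$ with the Atkin-Lehner involution.

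The heart of the argument is the explicit evaluation \eqref{E:main eqn}. The strategy is the standard unfolding-plus-local-computation for the Gelbart--Piatetski-Shapiro--Ginzburg--Soudry integral: unfold $\Psi_{n,r}(v_\pi\ot\xi^{a_\pi}_{\tau,s})$ against the Whittaker model of $\pi$ to express it as an integral over $U_{H_r}(F)\backslash H_r(F)$ (modulo the appropriate unipotent radical) of the product of a Whittaker function $W_{v_\pi}$ of $\pi$ and the section $\xi^{a_\pi}_{\tau,s}$ evaluated in the Whittaker model of $\tau$. The section $\xi^{a_\pi}_{\tau,s}$ is the $R_{r,a_\pi}$-fixed vector normalized by $\xi^{a_\pi}_{\tau,s}(I_{2r})=v_\tau$, so its restriction to $H_r(\frak{o})\cdot(\text{torus})$ is governed by the Casselman-Shalika-type formula for the unramified $\tau$ of Langlands type, while $W_{v_\pi}$ restricted to the diagonal torus of $H_r(F)$ (which sits inside the diagonal of $G_n(F)$ via \eqref{E:embedding}) is controlled by the newform Whittaker function. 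The key input I would need is an explicit formula — or at least a generating-function identity — for $W_{v_\pi}$ on the torus of $H_r(F)$, analogous to the Shintani--Schmidt--Miyauchi formulae in the $\GL_r$ case and to Tsai's computations; the nonvanishing at $I_{2r}$ is guaranteed by Hypothesis \ref{H}(3). Carrying out the resulting sum over dominant coweights and recognizing it as the ratio $L(s,\phi_\pi\ot\phi_{J(\tau)})/L(2s,\phi_{J(\tau)},\bigwedge^2)$ is a Cauchy-identity / Casselman-Shalika bookkeeping exercise, where the $\bigwedge^2$ in the denominator is exactly the normalizing $L$-factor of the degenerate principal series $I_r(\tau,s)$ on $\SO_{2r}$.

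From \eqref{E:main eqn} the dimension bound \eqref{E:dim bd} for $m>a_\pi$ follows formally: apply the level-raising operators (the $\SO_{2n+1}$ analogues of $\theta,\theta',\eta$ from \S\ref{SSS:level raising}) to $v_\pi$ to produce vectors in $\cV_\pi^{K_{n,m}}$, compute the effect of each operator on $\Psi_{n,r}(\,\cdot\,\ot\xi^{m}_{\tau,s})$ — I expect clean monomial factors in $q^{\pm s}$ and $q^{\pm 1}$ as in \eqref{E:zeta oldform n=2} — and observe that the resulting family of Rankin-Selberg integrals, for varying $r$ and varying choices of $\tau$, has rank equal to the number of monomials $\prod$ (level-raising words) of total weight $m-a_\pi$, which is precisely the right-hand side of \eqref{E:dim formula}. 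Linear independence of the corresponding vectors in $\cV_\pi^{K_{n,m}}$ then gives the lower bound. The main obstacle is the middle step: obtaining a sufficiently explicit description of the newform Whittaker function $W_{v_\pi}$ on the torus of the embedded $H_r(F)$ without assuming the full newform conjecture — only Hypothesis \ref{H} is available, so one must extract just enough ($K_{n,a_\pi}$-invariance plus nonvanishing of $\Lambda_{\pi,\psi}$) to run the unfolding, and control the support of $W_{v_\pi}$ using the geometry of $K_{n,a_\pi}\cap (\text{torus})$ and the Iwasawa decomposition for $H_r(F)$.
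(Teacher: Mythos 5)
Your overall scaffolding is on the right track — unfold the Rankin--Selberg integral by Iwasawa decomposition, use the Casselman--Shalika formula for the unramified section $\xi^{a_\pi}_{\tau,s}$, get the Atkin--Lehner sign from a functional equation, and derive the dimension bound from level-raising operators — but the step you yourself flag as ``the main obstacle'' is in fact where your argument fails, and the paper resolves it in a way your plan does not anticipate.

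You want an explicit formula, or a generating-function identity, for the newform Whittaker function $W_{v_\pi}$ restricted to the torus of the embedded $H_r(F)$, analogous to Shintani--Schmidt--Miyauchi. No such formula is available under only Hypothesis~\ref{H}: proving one is essentially as hard as the full newform conjecture, and the paper does \emph{not} derive one. Instead it packages the unfolded integral as a formal Laurent series $\Xi^{a_\pi}_{n,n}(v_\pi;X_1,\ldots,X_n;Y)$ with coefficients in the symmetric polynomial ring (Proposition~\ref{P:main prop}), proves it is a genuine Laurent polynomial by invoking the functional equation \eqref{E:FE} together with the computation of the intertwining operator on $\xi^m_{\tau,s}$ (Lemma~\ref{L:GK method for m}) and the Soudry--Jiang-Soudry--Kaplan $\gamma$-factor comparison, and then pins it down entirely without knowing $W_{v_\pi}$ explicitly: the functional equation forces invariance under $X_i\mapsto X_i^{-1}$ up to the sign $\e_\pi^n$, the $K_{n,a_\pi}$-invariance of $v_\pi$ forces the $Y_n$-degree to be nonnegative (Lemma~\ref{L:supp for para vec} plus \eqref{E:degree in Y_n}), and these two facts together force $\Xi^{a_\pi}_{n,n}(v_\pi)$ to be a \emph{constant}. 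The constant is then fixed to be $1$ by specializing to $r=1$ (Proposition~\ref{P:main prop}(4), Ginzburg's ``remove a Satake parameter'' trick) and reading off the constant term of $Z(s,v_\pi)$, which is $\Lambda_{\pi,\psi}(v_\pi)=1$. So the rigidity comes from symmetry plus positivity of support, not from an explicit Whittaker formula; your ``Cauchy-identity bookkeeping exercise'' has no input to run on.

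Two further points. Your Atkin--Lehner sign argument is in the right spirit, but the paper extracts $\e_1=\e_\pi$ from the already-established identity $\Xi^{a_\pi}_{n,1}(v_\pi;X_1)=1$ together with the functional equation of Proposition~\ref{P:main prop}(2), rather than by a separate global-to-local comparison; you should be aware these are logically intertwined, with \eqref{E:main eqn} proved first. And for the dimension bound, your picture of ``clean monomial factors in $q^{\pm s}$'' is the $r=1$ ($n=2$) picture from \eqref{E:zeta oldform n=2}; for general $n$ the level-raising operators include the full spherical Hecke algebra of $\SO_{2n}$ acting through its Satake transform (Proposition~\ref{P:main prop}(5)), not just $\theta,\theta',\eta$, and the linear independence is proved inside $\cS_n$ using the explicit values of $\Xi^m_{n,n}$ on the basis vectors (Lemmas~\ref{L:eta action} and~\ref{L:level a+1}) together with a parity-symmetrization $\eta^\square_\lambda$ that the paper introduces precisely to fix a subtle linear-dependence issue in Tsai's original basis. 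A ``rank count over varying $\tau$'' does not substitute for this and would leave the opposite-parity case unresolved.
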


\begin{remark}
In \cite{Tsai2013}, Tsai proved that \eqref{E:main eqn} holds when $\pi$ is supercuspidal and $r=1,n$. Note that 
in these cases, $L(s,\phi_\pi\ot\phi_{J(\tau)})=1$. On the other hand, when $\pi$ is unramifed, the identity
\eqref{E:main eqn} was first obtained by Gelbart and Piatetski-Shapiro (\cite[Proposition A.1]{GPSR1987}) for 
$r=n$, and then by Ginzburg (\cite[Theorem B]{Ginzburg1990}) for $r<n$.
\end{remark}

To prove \eqref{E:dim bd}, we define subsets $\cB_{\pi,m}\subset\cV_\pi^{K_{n,m}}$ (similar to Tsai's) in 
\S\ref{SS:conj basis} and show that under the Hypothesis \ref{H}, $\cB_{\pi,m}$ are linearly independent and have the 
cardinality given by the RHS of \eqref{E:dim formula} (and hence verifies \eqref{E:dim bd}). 
In particular, if Conjecture \ref{C1} $(1)$ holds, then $\cB_{\pi,m}$ define bases of 
$\cV_\pi^{K_{n,m}}$ for $m>a_\pi$. In this work, we also compute the integrals $\Psi_{n,r}(v\ot\xi_{\tau,s}^m)$ for 
$v\in\cB_{\pi,m}$ for $1\le r\le n$ (cf. \thmref{T:main for oldform}). It should be indicated that although we only compute the 
Rankin-Selberg integrals when $\tau$ is unramified, it turns out to be enough. In fact, we will show (without Hypothesis 
\ref{H}) that if $v$ is a nonzero element in $\cV_\pi^{K_{n, m}}$ (for some $m$) and $\tau$ is ramified, then 
$\Psi_{n,r}(v\ot\xi_s)=0$ for all $\xi_s\in I_r(\tau,s)$ for $1\le r\le n$ (cf. \lmref{L:vanish of RS integral}).\\ 

When $n=2$, we have other bases of oldforms defined by \eqref{E:oldform}. Alternatively, one can also compute 
the Rankin-Selberg integrals attached to these bases. When $r=1$, these are given by \eqref{E:zeta oldform n=2}.
Here we have the following result when $r=2$ (and $\tau$ unramified).

\begin{thm}\label{T:main'}
Let $v\in\cV_\pi^{K_{2,m}}$. We have
\begin{equation}\label{E:raising theta}
\Psi_{2,2}(\theta(v)\ot\xi^{m+1}_{\tau,s})=q^{-s+\frac{3}{2}}(\alpha_1+\alpha_2)\Psi_{2,2}(v\ot\xi^m_{\tau,s})
\end{equation}
and
\begin{equation}\label{E:raising theta'}
\Psi_{2,2}(\theta'(v)\ot\xi^{m+1}_{\tau,s})=q(1+q^{-2s+1}\alpha_1\alpha_2)\Psi_{2,2}(v\ot\xi^m_{\tau,s})
\end{equation}
and
\begin{equation}\label{E:raising eta}
\Psi_{2,2}(\eta(v)\ot\xi^{m+2}_{\tau,s})=q^{-2s+2}\alpha_1\alpha_2\Psi_{2,2}(v\ot\xi^m_{\tau,s})
\end{equation}
where $\alpha_1$ and $\alpha_2$ are the Satake parameters of $J(\tau)$.
\end{thm}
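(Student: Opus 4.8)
The proof will be a direct computation: expand the definitions of the level-raising operators $\theta,\theta',\eta$ recorded in \S\ref{SSS:level raising} and feed them into the integral representation of $\Psi_{2,2}$ from \S\ref{SSS:RS integral and FE}. Recall that $\eta\colon\cV_\pi^{K_{2,m}}\to\cV_\pi^{K_{2,m+2}}$ is translation by a single explicit element $\eta_0\in G_2(F)$, whereas $\theta,\theta'\colon\cV_\pi^{K_{2,m}}\to\cV_\pi^{K_{2,m+1}}$ are explicit finite linear combinations $v\mapsto\sum_x c_x\,\pi(g_x)v$ of $G_2(F)$-translates with roughly $q$ terms each. Since $W_{\pi(g)v}(\cdot)=W_v(\cdot\,g)$ for the Whittaker function $W_v=\Lambda_{\pi,\psi}(\pi(\cdot)v)$, in each case $\Psi_{2,2}$ of the raised vector becomes a finite sum of integrals of the shape $\int W_v(\iota(h)g_x)\,\Lambda_{\tau,\b\psi}\!\bigl(\xi^{m'}_{\tau,s}(h)\bigr)\,dh$ over the domain occurring in $\Psi_{2,2}$, where $\iota\colon H_2(F)\hookrightarrow G_2(F)$ is the embedding of \eqref{E:embedding}. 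The conceptual point is that $\Psi_{2,2}$ is an $H_2(F)$-equivariant pairing, so the whole task is to understand how these particular translates $g_x$, which do not lie in $\iota(H_2(F))$, interact with that equivariance.

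I would first handle $\eta$. Writing $\iota(h)\eta_0$ in terms of its decomposition against the Siegel parabolic $Q_2\subset H_2$ and using the left $U_2(F)$-equivariance of $W_v$, one moves $\eta_0$ past the integration variable at the cost of a modulus factor and of replacing the section $\xi^{m+2}_{\tau,s}$ by a torus-translate of $\xi^{m}_{\tau,s}$. Because $\tau$ is unramified and $\xi^{m}_{\tau,s}$ is the normalized $R_{2,m}$-fixed section with $\xi^{m}_{\tau,s}(I_4)=v_\tau$, the function $h\mapsto\Lambda_{\tau,\b\psi}(\xi^{m}_{\tau,s}(h))$ restricted to the Siegel torus is a constant multiple of $|\det|^{s+c_0}$ times the Casselman--Shalika/Shintani unramified $\GL_2$-Whittaker function of $\tau$; translating it one lattice step multiplies it by the Satake determinant $\alpha_1\alpha_2$ together with the power $q^{-2s+2}$, which gives \eqref{E:raising eta}.

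Next, for $\theta$ and $\theta'$ I would expand the coset sum and, using left $U_2(F)$-equivariance of $W_v$ together with a change of variables in the integration over the unipotent radical of $Q_2$, pull the sum over $x$ off of $W_v$ and onto the $\tau$-data, so that $\sum_x c_x\,W_v(\iota(h)g_x)$ becomes $W_v(\iota(h_0))$ paired against a finite combination of lattice-translates of $\Lambda_{\tau,\b\psi}(\xi^m_{\tau,s}(\cdot))$. That finite combination is exactly the evaluation of the unramified $\GL_2$-Whittaker function of $\tau$ at the $\varpi$-neighbours of a given torus point, so the Casselman--Shalika recursion collapses it: for $\theta$ it produces $\alpha_1+\alpha_2$, while for $\theta'$ the terms reorganize into $1+q^{-2s+1}\alpha_1\alpha_2$. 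The accompanying powers of $q$ and $q^{-s}$, namely $q^{-s+\frac32}$ for $\theta$ and the leading $q$ for $\theta'$, come from the modulus character of $Q_2$ evaluated on the torus parts of the $g_x$ and from keeping track of the normalization $\xi^m_{\tau,s}(I_4)=v_\tau$. This yields \eqref{E:raising theta} and \eqref{E:raising theta'}.

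The main obstacle will be the bookkeeping: one must write the representatives $g_x$ and the element $\eta_0$ in coordinates compatible with the embedding $\iota$ and with the explicit domain of $\Psi_{2,2}$, verify that each change of variables stays inside that domain, and carry every modulus factor and every power of $q^{-s}$ so that the emergent Hecke-type sums match the advertised polynomials in $\alpha_1,\alpha_2,q^{-s}$. As a consistency check at the end, one can compare the values produced on the $\theta^i\theta'^j\eta^k v_\pi$-basis of $\cV_\pi^{K_{2,m}}$ (a genuine basis for $n=2$ by Roberts--Schmidt, cf. \lmref{L:conj for n=2}) with those on the $\cB_{\pi,m}$-basis coming from \thmref{T:main for oldform}, and, in the rank-one direction, with \eqref{E:zeta oldform n=2}.
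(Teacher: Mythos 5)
Your plan is essentially the paper's proof: one expands $\theta,\theta',\eta$ as explicit (finite) translates coming from Roberts--Schmidt via the accidental isomorphism $\SO_5\cong\mathrm{PGSp}_4$ (this is where the representatives $g_x$ actually come from, cf.\ \eqref{E:exp theta}--\eqref{E:exp theta'}), feeds them into the Iwasawa/sum-over-dominant-coweights expansion of $\Psi_{2,2}$ from \lmref{L:exp of RS int}, and collapses the resulting shifts with the Casselman--Shalika formula \eqref{E:Whittaker for GL_2}; for $\eta$ the paper packages the same lattice-shift as \lmref{L:eta action} combined with \propref{P:main prop}(1). One small inaccuracy in your framing: for $n=2$ all of the translates $\eta_0=\varpi^{-\mu_2}$, $m_2(\varpi^{-\epsilon_1})$, $m_2(\varpi^{-\epsilon_2})z_2(c\varpi^{-1})$, $m_2(\varpi^{-\epsilon_1-\epsilon_2})$, $n_2(c\varpi^{-m-1})$ \emph{do} lie in $\iota(H_2(F))$ (they are torus, Levi, and Siegel-unipotent elements of $\SO_4$), which is precisely what lets the equivariance argument you invoke run without friction.
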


\begin{remark}
Combining \eqref{E:main eqn} with \thmref{T:main'}, one can explicitly compute the integrals 
$\Psi_{2,2}(v\ot\xi^m_{\tau,s})$ for every $v\in\cV_\pi^{K_{2,m}}$ and $m\ge 0$. 
\end{remark}

\subsection{An outline of the paper}
We end this introduction by a brief outline of the paper. In \S\ref{S:newform}, we will introduce the paramodular 
subgroups of $\SO_{2n+1}$ defined by Gross and Tsai. In \S\ref{S:oldform}, we will describe conjectural bases for 
oldforms given implicitly by Tsai in her thesis. In \S\ref{S:RS integral}, we will introduce the Rankin-Selberg integrals 
attached to generic representations of $\SO_{2n+1}\x\GL_r$ (for $1\le r\le n$). In \S\ref{S:unramified rep}, we will collect 
some facts for unramified representations including formulae of the spherical Whittaker functions of $\GL_r$ and various 
Satake isomorphisms. The goal of \S\ref{S:key} is to prove \propref{P:main prop}, which is the core of this work. 
The idea is to apply the standard techniques developed by Jacquet et al. to our setting. We also invoke an idea of 
Ginzburg, which allows us to obtain the results for $r<n$ from that for $r=n$. In \S\ref{S:prove}, we will prove our main 
results. We end with a comparison between the bases of oldforms given by Roberts-Schmidt, Casselman and Tsai.

\subsection{Notation and conventions}
Let $F$ be a finite extension of $\bbQ_p$, $\frak{o}$ be the valuation ring of $F$, $\frak{p}$ be the maximal ideal of 
$\frak{o}$, $\varpi\in\frak{p}$ be a prime element and $q$ be the cardinality of the residue field $\frak{f}=\frak{o}/\frak{p}$. 
We fix an additive character $\psi$ of $F$ with $\ker(\psi)=\frak{o}$. Let $\nu_1=|\cdot|_F$ be the absolute value on $F$ 
normalized so that $|\varpi|_F=q^{-1}$. In general, if $r\ge 1$ is an integer, we let $\nu_r$ be the character on $\GL_r(F)$ 
defined by $\nu_r(a)=|\det(a)|_F$. Suppose that $G$ is an $\ell$-group in the sense of \cite[Section 1]{BZ1976}. 
We denote by $\delta_G$ the modular function of $G$. If $K\subset G$ is an open compact subgroup, then $\cH(G//K)$ 
denotes the algebra of locally constant, compact supported $\bbC$-valued functions on $G$ which are bi-$K$-invariant. 
In this work, by a representation of $G$ we mean a smooth representation with coefficients in $\bbC$. 
If $\pi$ is a representation of $G$, then its underlying (abstract) space is usually denoted by $\cV_\pi$. Finally, we define  
elements $\jmath_r\in\GL_r(F)$ inductively by 
\begin{equation}\label{E:J_m}
\jmath_1=1\quad\text{and}\quad \jmath_r=\pMX{0}{1}{\jmath_{r-1}}{0}
\end{equation}
and write $I_r$ for the identity matrix in $\GL_r(F)$.

\section{Paramodular subgroups of $\SO_{2n+1}$}\label{S:newform}
In this section, we introduce the open compact subgroups $K_{n,m}$ of $\SO_{2n+1}$ defined by Gross 
(\cite[Section 5]{Gross2015}) and Tsai (\cite[Chapter 7 ]{Tsai2013}). Following Roberts-Schmidt, we will call $K_{n,m}$
the $paramodular$ $subgroups$. 
 
\subsection{Odd special orthogonal groups}\label{SSS:SO_{2n+1}}
The group $G_n=\SO_{2n+1}$ is the special orthogonal group of the quadratic space $(V_n,(,))$ with
\[
V_n=F e_1\oplus\cdots\oplus Fe_n\oplus Fv_0\oplus Ff_n\oplus\cdots\oplus Ff_1
\] 
and the symmetric bilinear form $(,):V_n\x V_n\to F$ given by $(v_0,v_0)=2$, $(e_i,f_i)=1$ for $1\leq i\leq n$ and all other
inner products are zero. Thus the Gram matrix of the ordered basis $\stt{e_1,\ldots, e_n, v_0, f_n,\ldots, f_1}$ is 
\begin{equation}\label{E:S}
S
=
\begin{pmatrix}
&&\jmath_n\\
&2&\\
\jmath_n
\end{pmatrix}\in\GL_{2n+1}(F).
\end{equation}
We have
\[
G_n(F)=\stt{g\in{\rm SL}_{2n+1}(F)\mid \,^{t}gSg=S}
\]
where $^{t}g$ denotes the transpose of $g$. Let $U_n\subset G_n$ be the upper triangular maximal unipotent subgroup
and define the non-degenerate character $\psi_{U_n}: U_n(F)\to\bbC^\x$ by 
\begin{equation}\label{E:psi_U}
\psi_{U_n}(u)=\psi(u_{12}+u_{23}+\cdots+u_{n-1, n}+ 2^{-1}u_{n,n+1})
\end{equation}
for $u=(u_{ij})\in U_n(F)$. 

\subsection{Paramodular subgroups}\label{SS:paramodular subgroup}
Let $\bbL_m\subset V_n$ be the $\frak{o}$-lattice defined by
\[
\bbL_{m}
=
\frak{o}e_1\oplus\cdots\oplus\frak{o}e_n\oplus\frak{p}^m v_0\oplus\frak{p}^mf_n\oplus\cdots\oplus\frak{p}^mf_1
\]
for $m\ge 0$. Let $J_{n,m}\subset G_n(F)$ be the open compact subgroup stabilizing the lattice $\bbL_m$,
so that $J_{n,0}=G_n(\frak{o})$ is the hyperspecial maximal compact subgroup of $G_n(F)$. 
We indicate that the explicit shape of $J_{n,m}$ can be found in \cite[Section 3.4]{Shahabi2018}. For $m\geq 1$,  
$J_{n,m}$ is isomorphic to the group of $\frak{o}$ points of a group scheme over $\frak{o}$. More precisely, if we equip 
$\bbL_{m}$ with the symmetry bilinear form $(,)_m:=\varpi^{-m}(,)$, then the Gram matrix of $(\bbL_{m}, (,)_m)$ 
associated to the ordered basis $\stt{e_1,\ldots, e_n, \varpi^m v_0, \varpi^m f_n,\ldots, \varpi^mf_1}$ is 
\[
S_m
=
\begin{pmatrix}
&&\jmath_n\\
&2\varpi^m&\\
\jmath_n&&
\end{pmatrix}
\]
and 
\[
\tilde{J}_{n,m}:=
\stt{g=(g_{ij})\in{\rm SL}_{2n+1}(\frak{o})\mid
\text{$^tgS_m g=S_m$ and $g_{j,n+1}\in\frak{p}^m$ for $1\leq j\neq n+1\leq 2n+1$}}
\]
is the group of $\frak{o}$ point of a group scheme $\tilde{\bf{J}}_{n,m}$ over $\frak{o}$, i.e. 
$\tilde{J}_{n,m}=\tilde{\bf{J}}_{n,m}(\frak{o})$ (cf. \cite[Theorem 3.6]{Shahabi2018}). Now we have 
$J_{n,m}=t_m^{-1}\tilde{J}_{n,m}t_{m}$ (in $\GL_{2n+1}(F)$), where
\[
t_m
=
\begin{pmatrix}
\varpi^m I_n&&\\
&1&\\
&&I_n
\end{pmatrix}.
\]
We define $K_{n,0}=J_{n,0}=G_n(\frak{o})$. For $m\geq 1$, on the other hand, the reduction modulo $\frak{p}$ map 
gives rise to a surjective homomorphism
\[
\tilde{J}_{n,m}=\tilde{\bf{J}}_{n,m}(\frak{o})
\relbar\joinrel\twoheadrightarrow
{\rm S}({\rm O}_{2n}(\frak{f})\x{\rm O}_1(\frak{f}))
\relbar\joinrel\twoheadrightarrow
{\rm O}_{2n}(\frak{f})
\overset{{\rm det}}{\relbar\joinrel\twoheadrightarrow}
\stt{\pm 1}.
\]
Then $K_{n,m}\subset J_{n,m}$ is defined to be the index $2$ normal subgroup such that 
$t_m K_{n,m} t_m^{-1}\subset \tilde{J}_{n,m}$ is the kernel of the above homomorphism. 
Again, the explicit shape of $K_{n,m}$ can be found in \cite[Section 3.5]{Shahabi2018}. 

\begin{remark}\label{R:paramodular subgroup}
As pointed out in \cite{Gross2015}, the definition of $K_{n,m}$ was suggested by Brumer, based on his extensive 
computations in \cite{BrumerKenneth2014}. 
\end{remark}

After defining the paramodular subgroups, we now prove:

\begin{lm}\label{L:conj for n=2}
Conjecture \ref{C1} holds when $n=1,2$.
\end{lm}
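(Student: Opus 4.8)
The plan is to derive both cases from the classical newform theories for $\GL_2$ and $\GSp_4$ via the exceptional isomorphisms $\SO_3\cong{\rm PGL}_2$ and $\SO_5\cong{\rm PGSp}_4$. For $n=1$, I would first fix an isomorphism $\iota\colon\SO_3\isoto{\rm PGL}_2$ compatible with the quadratic space realization of \S\ref{S:newform} and carrying the Whittaker datum $(U_1,\psi_{U_1})$ to the standard one of ${\rm PGL}_2$; under $\iota$ a generic $(\pi,\cV_\pi)$ of $G_1(F)$ corresponds to a generic representation $\wtd\pi$ of $\GL_2(F)$ with trivial central character. Next I would check that $\iota$ sends the lattice $\bbL_m$, and hence $K_{1,m}$ and $J_{1,m}$, to $\Gamma_0(\mathfrak{p}^m)$ and to its Atkin--Lehner normalizer, as already recorded in the introduction. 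Casselman's theorem \cite{Casselman1973}, together with the standard oldform theory for $\GL_2$, then gives $\dim_\bbC\cV_{\wtd\pi}^{\Gamma_0(\mathfrak{p}^m)}=\max(0,\,m-c+1)$, where $c$ is the conductor of $\wtd\pi$; this is exactly the right-hand side of \eqref{E:dim formula} with $a_\pi$ replaced by $c$. Since the $L$- and $\epsilon$-factors of $\wtd\pi$ under local Langlands for $\GL_2$ coincide with those of the standard $2$-dimensional parameter obtained by composing $\phi_\pi$ with ${}^LG_1^0(\bbC)=\Sp_2(\bbC)\hookto\GL_2(\bbC)$ in \eqref{E:epsilon for pi}, one gets $c=a_\pi$, and $\varepsilon_\pi$ equals the Atkin--Lehner eigenvalue on the newform. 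Part (2) of Conjecture \ref{C1} then follows because $J_{1,a_\pi}/K_{1,a_\pi}$ is generated by the image of the Atkin--Lehner element, and part (3) is the classical fact that the Whittaker functional is nonzero on the $\GL_2$-newform.

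For $n=2$ I would proceed in the same way, fixing an isomorphism $G_2=\SO_5\isoto{\rm PGSp}_4$ compatible with $(U_2,\psi_{U_2})$ and the standard Whittaker datum of ${\rm PGSp}_4$, so that $\pi$ corresponds to a generic $\wtd\pi$ of ${\rm GSp}_4(F)$ with trivial central character, and checking that it carries $K_{2,m}$ and $J_{2,m}$ to Roberts--Schmidt's paramodular group $K(\mathfrak{p}^m)$ and to $\langle K(\mathfrak{p}^m),u_m\rangle$ for an Atkin--Lehner element $u_m$ (again as recorded in the introduction). The main theorem of \cite{RobertsSchmidt2007} gives $\dim_\bbC\cV_{\wtd\pi}^{K(\mathfrak{p}^m)}$ as the number of triples $(i,j,k)$ of nonnegative integers with $i+j+2k=m-c$ (and $0$ if $m<c$), where $c$ is the paramodular conductor; a short count shows this equals the right-hand side of \eqref{E:dim formula} with $a_\pi$ replaced by $c$. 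It then remains to identify $c$ with $a_\pi$, and the Atkin--Lehner eigenvalue on the newform with $\varepsilon_\pi$. This follows from Roberts--Schmidt's identification of $c$ and of the Atkin--Lehner sign with the conductor exponent and the $\epsilon$-factor attached to $\wtd\pi$ by Novodvorsky's ${\rm GSp}_4\times\GL_1$ integrals, together with the (by now standard) fact that these agree with the $\epsilon$-factor of the $4$-dimensional spin parameter of $\wtd\pi$, which matches the standard parameter obtained from $\phi_\pi$ via ${}^LG_2^0(\bbC)=\Sp_4(\bbC)\hookto\GL_4(\bbC)$ in \eqref{E:epsilon for pi}. Granting this, parts (1), (2), (3) of Conjecture \ref{C1} follow from the matched dimension formula, from the Atkin--Lehner computation, and from \eqref{E:zeta newform n=2} (which forces $\Lambda_{\pi,\psi}(v_\pi)\neq 0$), respectively.

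The bookkeeping of pinning down the two exceptional isomorphisms at the level of the matrix models, lattices and unipotent characters fixed in \S\ref{S:newform} is routine but must be carried out with care; the one substantive point, and the step I expect to be the real obstacle, is the compatibility between the $\epsilon$-factors coming from Casselman's and Novodvorsky's theory and Tate's $\epsilon$-factor of the $\Sp_{2n}(\bbC)$-valued parameter $\phi_\pi$. For $n=1$ this is classical, and for $n=2$ it is well known, so in practice I expect simply to invoke \cite{Casselman1973} and \cite{RobertsSchmidt2007} together with these $\epsilon$-factor identities; this suffices because everything else in the paper uses Conjecture \ref{C1} only through these two cases (and then only via Hypothesis \ref{H}).
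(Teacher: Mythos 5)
Your approach coincides with the paper's: both cases are reduced to Casselman and Roberts--Schmidt via the exceptional isomorphisms, and you correctly identify the $\epsilon$-factor compatibility as the single substantive point. For $n=1$ your sketch is complete. For $n=2$, however, your remark that the agreement of Novodvorsky's $\epsilon$-factor with $\epsilon(s,\phi_\pi,\psi)$ ``is well known'' glosses over the one step where the paper actually has to work: Roberts--Schmidt verified $\epsilon(s,\pi,\psi)=\epsilon(s,\phi_\pi,\psi)$ only for \emph{non-supercuspidal} $\pi$, and the supercuspidal case is not in their book. The paper fills this in by (i) observing that Novodvorsky's zeta integral coincides with the $\SO_5\times\GL_1$ Rankin--Selberg integral under the accidental isomorphism, (ii) invoking the $\gamma$-factor identity of Soudry, Jiang--Soudry and Kaplan (\thmref{T:RS=Gal gamma}) to obtain $\gamma(s,\pi,\psi)=\gamma(s,\phi_\pi,\psi)$, and (iii) using that $L(s,\pi)=L(s,\phi_\pi)=1$ for supercuspidal $\pi$ (by results of Jiang--Soudry and Takloo-Bighash) to upgrade the $\gamma$-identity to the needed $\epsilon$-identity. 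Your outline is otherwise sound, but this chain of citations is required; a bare appeal to ``well known'' leaves the supercuspidal case unjustified.
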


\begin{proof}
Since $K_{1,m}\cong \Gamma_0(\frak{p}^m)$ and $K_{2,m}\cong K(\frak{p}^m)$ under the accidental isomorphisms 
$\SO_3\cong{\rm PGL}_2$ and $\SO_5\cong{\rm PGSp}_4$, we can apply relevant results for generic representations 
of ${\rm PGL}_2(F)$ and ${\rm PGSp}_4(F)$. When $n=1$, lemma follows from the results in \cite{Casselman1973}
and \cite{Schmidt2002}. When $n=2$, we can apply the results of Roberts-Schmidt in \cite{RobertsSchmidt2007}. 
However, one thing needs to be clarified. Let $\pi$ be an irreducible generic representation of 
$\SO_5(F)\cong {\rm PGSp}_4(F)$. We denote by $L(s,\pi)$, $\epsilon(s,\pi,\psi)$ and $\gamma(s,\pi,\psi)$ the $L$-, 
$\epsilon$- and $\gamma$-factor attached to $\pi$ and $\psi$ defined by Novodvosky's zeta integrals for 
${\rm GSp}_4\x\GL_1$ (\cite[Section 2.6]{RobertsSchmidt2007}). Then as mentioned in the introduction, Roberts-Schmidt 
proved that there exists an integer $N_\pi\ge 0$ such that $\cV_\pi^{K_{2,m}}=0$ if $0\le m< N_\pi$ and 
${\rm dim}_\bbC\cV_\pi^{K_{2,N_\pi}}=1$. They also defined Atkin-Lehner elements $u_m\in{\rm GSp}_4(F)$ for 
$m\ge 0$ (\cite[Page 5]{RobertsSchmidt2007}) and showed that if the action of $\pi(u_{N_\pi})$ on the line 
$\cV_\pi^{K_{2,N_\pi}}$ gives $\e'_\pi\in\stt{\pm 1}$, then 
\[
\epsilon(s,\pi,\psi)=\e_\pi' q^{-N_\pi(s-\frac{1}{2})}.
\]
Under the isomorphism $\SO_5\cong{\rm GSp}_4$, it's not hard to see that $u_0\in J_{2,0}=K_{2,0}$ and
$u_m\in J_{2,m}\setminus K_{2,m}$ if $m>0$ (\cite[Section 6.2]{Tsai2013}).  To apply their results (comparing with 
Conjecture \ref{C1}), we have to show
\[
\epsilon(s,\pi,\psi)=\epsilon(s,\phi_\pi,\psi).
\]
When $\pi$ is not supercuspidal, this was already verified by Roberts-Schmidt. So let's assume that $\pi$ is supercuspidal.
Then we note that Novodvorsky's zeta integrals (in \cite{RobertsSchmidt2007}) are equal to the Rankin-Selberg 
integrals for $\SO_5\x\GL_1$ (with $\tau$ being trivial ) under the accidental isomorphism (cf. Remark 
\ref{R:zeta integral}). Hence we can apply the results of Soudry, Jiang-Soudry and Kaplan
(cf. \thmref{T:RS=Gal gamma}) to obtain
\[
\gamma(s,\pi,\psi)=\gamma(s,\phi_\pi,\psi).
\]
Since $\pi$ is supercuspidal, we have $L(s,\pi)=L(s,\phi_\pi)=1$ by \cite[Theorem 1.4]{JiangSoudry2003} and 
\cite[Proposition 3.9]{Takloo-Bighash2000}. Now the desired identity between $\epsilon$-factors follows.  
\end{proof}

\section{Conjectural Basis for Oldforms}\label{S:oldform}
Let $\pi$ be an irreducible generic representation of $G_n(F)$.
In this section, we will define the subsets $\cB_{\pi,m}$ of $\cV_\pi^{K_{n,m}}$ for $m>a_\pi$ mentioned in the 
introduction. They are built on (conjectural) newforms and certain level raising operators coming from elements in the
spherical Hecke algebras of $\SO_{2n}$. These subsets were already appeared implicitly in Tsai's thesis
(\cite[Propositions 8.1.5, 9.1.6, 9.1.7]{Tsai2013}). However, there are some inaccuracies in her formulation (when $\pi$ is 
supercuspidal) and will be fixed in this paper.

\subsection{Even special orthogonal groups}\label{SS:SO even}
The group $H_r(F)=\SO_{2r}(F)$ can be realized as the subgroup of ${\rm SL}_{2r}(F)$ via 
\[
H_r(F)
=
\stt{h\in{\rm SL}_{2r}(F)\mid {}^th\jmath_{2r}h=\jmath_{2r}}
\]
and can be embedded into $G_n(F)$ as a closed subgroup fixing the vectors
$e_{r+1}, \cdots, e_n, v_0, f_n,\cdots, f_{r+1}$ pointwisely. In coordinates, we have
\begin{equation}\label{E:embedding}
H_r(F)\ni\pMX{a}{b}{c}{d}\longmapsto
\begin{pmatrix}
a&&b\\
&I_{2(n-r)+1}\\
c&&d
\end{pmatrix}\in G_n(F)
\end{equation}
for some $a,b,c,d\in{\rm Mat}_{r\x r}(F)$. 
In the followings, we do not distinguish $H_r(F)$ with its image in $G_n(F)$ under this embedding. 
This shall not cause serious confusions.

\subsection{Dominant weights}
Let $T_n\subset G_n$ be the maximal split diagonal torus and put $T_r=T_n\cap H_r$ for $1\le r\le n$.
Let $\epsilon_1, \epsilon_2,\ldots, \epsilon_n$ be the standard basis of $X_\bullet(T_n)={\rm Hom}(\bbG_m, T_n)$ such 
that $\epsilon_j(y)\in T_n(F)$ is the diagonal matrix whose $(j,j)$-entry and $(2n+2-j,2n+2-j)$-entry is $y$ and $y^{-1}$
respectively, and all other diagonal entries are $1$, for $1\le j\le n$. We also write $y^\la=\la(y)$ for $y\in F^\x$ and 
$\la\in X_\bullet(T_n)$. Let $\|\cdot\|:X_\bullet(T_n)\to\bbZ_{\geq 0}$ be the sup-norm with respect 
to the standard basis and ${\rm tr}:X_\bullet(T_n)\to\bbZ$ be the trace map defined by
\begin{equation}\label{E:trace}
{\rm tr}(\lambda)=\la_1+\la_2+\cdots+\la_n
\quad\text{where}\quad
\lambda=\la_1\epsilon_1+\la_2\epsilon_2+\cdots+\la_n\epsilon_n.
\end{equation}
Let $P^+_{G_n}\subset P^+_{H_n}\subset X_\bullet(T_n)$ be the subsets given by
\[
P^+_{G_n}
=
\stt{\la_1\epsilon_1+\la_2\epsilon_2+\cdots+\la_n\epsilon_n\mid \la_1\ge \la_2\ge \cdots\ge \la_n\ge 0}
\subset
P^+_{H_n}
=
\stt{\la_1\epsilon_1+\la_2\epsilon_2+\cdots+\la_n\epsilon_n\mid \la_1\ge \la_2\ge \cdots\ge |\la_n|}.
\]
We put
\[
\mu_r=\epsilon_1+\epsilon_2+\cdots+\epsilon_r\in X_\bullet(T_n).
\]

\subsection{Filtrations of twisted paramodular subgroups}\label{SS:filtration of K}
As observed in \cite{Tsai2013}, there are filtrations between paramodular subgroups $K_{n,m}$ according to
the parity of $m$ after we conjugate $K_{n,m}$ by certain torus elements. More precisely, let 
\begin{equation}\label{E:K*}
K^{(m)}_{n,m'}
=
\varpi^{(\frac{m'-m}{2})\mu_n}\cdot K_{n,m'}\cdot\varpi^{-(\frac{m'-m}{2})\mu_n}
\end{equation}
for $m'\geq m$ with the same parity, then we have 
\begin{equation}\label{E:filtration}
K_{n,m}=K^{(m)}_{n,m}\supset K^{(m)}_{n,m+2}\supset\cdots\supset K^{(m)}_{n,m+2\ell}\supset\cdots\supset R_{n,m}
=\bigcap_{m'\geq m,\,m'\equiv m\pmod 2} K^{(m)}_{n,m'}.
\end{equation}
When $m=0,1$, \eqref{E:filtration} was verified in \cite[Section 7.1]{Tsai2013}. The general case is a consequence of that. 
From \eqref{E:filtration} we immediately get
\begin{equation}\label{E:R* fix space}
\cV_\pi^{R_{n,m}}=\bigcup_{m'\geq m,\,m'\equiv m\pmod 2}\cV_{\pi}^{K^{(m)}_{n,m'}}.
\end{equation}

\subsection{Properties of $R_{r,m}$}\label{SS:R_r,m}
We record some properties of $R_{r,m}$ in this subsection (cf. \cite[Section 2.5]{Tsai2013}). 
\begin{itemize}
\item[(i)]
$R_{r, 0}=H_r(\frak{o})$ and $R_{r, 1}$ are two non-conjugate hyperspecial maximal compact subgroups of $H_r(F)$ and 
$R_{r,m}\cap M_r(F)=M_r(\frak{o})$ for $m\ge 0$.
\item[(ii)]
The relation
\begin{equation}\label{E:R*}
R_{r,m'}=\varpi^{-(\frac{m'-m}{2})\mu_r}\cdot R_{r,m}\cdot\varpi^{(\frac{m'-m}{2})\mu_r}
\end{equation}
holds for $m,m'\geq 0$ with the same parity. This follows from \eqref{E:K*}, \eqref{E:filtration} and the fact that 
$R_{r,m}=R_{n,m}\cap H_r(F)$.
\item[(iii)]
By (i) and (ii), we have the Iwasawa decomposition
\begin{equation}\label{E:Iwasawa decomp}
H_r(F)=Q_r(F)R_{r,m}=B_{H_r}(F)R_{r,m}
\end{equation}
for $m\ge 0$, where $B_{H_r}$ denotes the upper triangular Borel subgroup of $H_r$.
\item[(iv)]
As subgroups in $\GL_{2r}(F)$, we have
\begin{equation}\label{E:R conj}
t_{r,m}^{-1}R_{r,m}t_{r,m}=R_{r,0}
\quad\text{and}\quad
w_{r,m}^{-1}R_{r,m}w_{r,m}=R_{r,m}
\end{equation}
where
\begin{equation}\label{E:t and w}
t_{r,m}=\pMX{\varpi^{-m} I_r}{}{}{I_r}
\quad\text{and}\quad
w_{r,m}
=
\begin{pmatrix}
&\varpi^{-m}I_r\\\varpi^mI_r
\end{pmatrix}.
\end{equation}
Note that $w_{r,m}\in R_{r,m}$ when $r$ is even and $w_{r,m}\in{\rm O}_{2r}(F)\setminus H_r(F)$ when $r$ is odd.
\end{itemize}

We then have the following consequence.

\begin{lm}\label{L:same vol}
Let $dh$ be an arbitrary Haar measure on $H_r(H)$. Then we have ${\rm vol}(R_{r,m}, dh)={\rm vol}(H_r(\frak{o}),dh)$ 
for all $m\geq 0$.
\end{lm}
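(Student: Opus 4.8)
The plan is to exhibit, for each integer $m\ge 0$, a Haar-measure-preserving topological automorphism of $H_r(F)$ that carries $R_{r,m}$ onto $R_{r,0}=H_r(\mathfrak{o})$; the asserted equality of volumes then follows immediately. The automorphism I would use is conjugation by the element $t_{r,m}=\pMX{\varpi^{-m}I_r}{}{}{I_r}$ from \eqref{E:t and w}, since \eqref{E:R conj} already records that $t_{r,m}^{-1}R_{r,m}t_{r,m}=R_{r,0}$; the point is to check that this conjugation is an automorphism of $H_r(F)$ and that it preserves $dh$.

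First I would verify that $t_{r,m}$ normalizes $H_r$. Using the block description $\jmath_{2r}=\pMX{0}{\jmath_r}{\jmath_r}{0}$, a one-line computation gives ${}^{t}t_{r,m}\,\jmath_{2r}\,t_{r,m}=\varpi^{-m}\jmath_{2r}$, so $t_{r,m}$ lies in the orthogonal similitude group $\GO_{2r}(F)$. As $\GO_{2r}$ normalizes the isometry group $\mathrm O_{2r}$ and conjugation preserves determinants, $\GO_{2r}$ normalizes $H_r=\SO_{2r}$; hence $\alpha_m:=\Int(t_{r,m}^{-1})$ restricts to a topological automorphism of $H_r(F)$ with $\alpha_m(R_{r,m})=R_{r,0}$. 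Next I would show that $\alpha_m$ preserves $dh$: the group $H_r(F)$ is unimodular, being the points of a reductive group, and the module of $\alpha_m$ equals $\lvert\det(\Ad(t_{r,m})\mid\Lie H_r)\rvert_F$. Since $t_{r,m}$ normalizes the diagonal torus, $\Ad(t_{r,m})$ fixes the Cartan subalgebra pointwise and scales each root space $\mathfrak g_\beta$ by $\beta(t_{r,m})$; pairing $\mathfrak g_\beta$ with $\mathfrak g_{-\beta}$ shows this determinant equals $1$ (equivalently, $\SO_{2r}$ is semisimple when $r\ge 2$, so any Lie-algebra automorphism preserves the Killing form, while for $r=1$ the torus element $t_{r,1}$ centralizes $H_1(F)$). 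Therefore ${\rm vol}(R_{r,m},dh)={\rm vol}(\alpha_m(R_{r,m}),dh)={\rm vol}(R_{r,0},dh)={\rm vol}(H_r(\mathfrak{o}),dh)$.

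I do not anticipate a real obstacle; the one point deserving care is the measure-preservation of $\alpha_m$, i.e.\ that conjugation by an element of $\GO_{2r}(F)$ not lying in $H_r(F)$ still preserves the Haar measure of $H_r(F)$ --- precisely the phenomenon that reconciles the equal volumes of $R_{r,0}$ and $R_{r,1}$ with their non-conjugacy in $H_r(F)$ recorded in item (i) of \S\ref{SS:R_r,m}. If one prefers to minimize the use of $\GO_{2r}$, an alternative is to first invoke \eqref{E:R*} --- conjugation by $\varpi^{k\mu_r}\in H_r(F)$ is inner, hence preserves $dh$ --- to reduce to the two cases $m=0$ and $m=1$, and then to dispatch the single comparison ${\rm vol}(R_{r,1},dh)={\rm vol}(R_{r,0},dh)$ either by the $t_{r,1}$ argument above or by citing that all hyperspecial maximal compact subgroups of $H_r(F)$ share the same volume.
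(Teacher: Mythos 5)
Your proof is correct, and it reaches the conclusion by a genuinely different mechanism than the paper's, though both start from the same observation that $t_{r,m}$ lies in the orthogonal similitude group and therefore normalizes $H_r$. The paper passes to the larger group $\t{H}_r=\GSO_{2r}$: it extends the Haar measure $dh$ first to $Z(F)H_r(F)$ (with $Z$ the center of $\t{H}_r$) and then to the finite-index overgroup $\t{H}_r(F)$, and invokes the unimodularity of the reductive group $\t{H}_r(F)$ to see that conjugation by $t_{r,m}\in\t{H}_r(F)$ preserves $d\t{h}$; comparing ${\rm vol}(Z(\frak{o})R_{r,m},d\t{h})$ with ${\rm vol}(Z(\frak{o})R_{r,0},d\t{h})$ and cancelling ${\rm vol}(Z(\frak{o}),dz)$ then gives the claim. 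You instead stay inside $H_r(F)$ and compute the modulus of the automorphism $\Int(t_{r,m}^{-1})$ directly, via $\lvert\det(\Ad(t_{r,m})\mid\Lie H_r)\rvert_F=1$, established by the root-space/Killing-form argument (with the abelian case $r=1$ noted separately). Both are sound; the paper's route is shorter once one grants that reductive $p$-adic groups are unimodular and that Haar measure extends along finite-index inclusions, while yours is more self-contained and makes the non-conjugacy of $R_{r,0}$ and $R_{r,1}$ in $H_r(F)$ explicitly a non-issue, since you never need the conjugating element to be inner. Your fallback via \eqref{E:R*}, reducing to the single comparison $m=0$ versus $m=1$, is also a clean way to isolate the only genuinely new case the lemma is asserting.
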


\begin{proof}
Consider the similitude group $\t{H}_r={\rm GSO}_{2r}\subset\GL_{2r}$ defined by same matrix $\jmath_{2r}$ 
with the center $Z$. Then $\t{H}_r(F)$ contains $H_r(F)$ as a subgroup and $Z(F)\cong F^\x$. Moreover, one
has $Z(F)\cap H_r(F)=\stt{I_{2r}}$ and $Z(F)H_r(F)$ is a closed subgroup in $\t{H}_r(F)$ with finite index, and hence 
also open. Fix a Haar measure $dz$ on $Z(F)$. Then the Haar measure $dzdh$ on $Z(F)H_r(F)$ induces a unique 
Haar measure $d\t{h}$ on $\t{H}_r(F)$, whose restriction to $Z(F)H_r(F)$ is $dzdh$.
Now we note that $t_m\in\t{H}_r(F)$, so that the first conjugation in \eqref{E:R conj} is actually in 
$\t{H}_r(F)$. It then follows that
\[
{\rm vol}(Z(\frak{o}),dz){\rm vol}(R_{r,m},dh)
=
{\rm vol}(Z(\frak{o})R_{r,m}, d\t{h})
=
{\rm vol}(Z(\frak{o})R_{r,0}, d\t{h})
=
{\rm vol}(Z(\frak{o}),dz){\rm vol}(R_{r,0},dh).
\]
Since $H_r(\frak{o})=R_{r,0}$, the proof follows.
\end{proof}

\subsection{Level raising operators}\label{SSS:level raising}
To define the level raising operators, we use elements in $\cH(H_n(F)//R_{n,m})$, whose standard basis can 
be described as follows. For $\lambda\in P^+_{H_n}$, let
\[
\varphi_{\lambda, m}=\bbI_{R_{n,m}\varpi^{\lambda}R_{n,m}}
\]
be the characteristic function of the double coset $R_{n,m}\varpi^\lambda R_{n,m}$. Then we have
\[
\cH(H_n(F)//R_{n,m})=\bigoplus_{\lambda\in P^+_{H_n}}\bbC\cdot\varphi_{\lambda, m}
\]
as $\bbC$-linear spaces. This follows from the cases $m=0, 1$ (cf. \cite[Page 51]{Tits1979}) together with \eqref{E:R*}.\\

Given $\varphi\in\cH(H_n(F))$ and $v\in\cV_\pi$, define $\varphi\star v\in\cV_\pi$ by
\begin{equation}\label{E:Hecke action for H}
\varphi\star v
=
\int_{H_n(F)}\varphi(h)\pi(h^{-1})vdh
\quad
\text{(${\rm vol}(H_n(\frak{o}),dh)=1$).}
\end{equation}
Now if $v\in\cV_\pi^{K_{n,m}}$, then we have $\varphi_{\lambda, m}\star v\in\cV_\pi^{R_{n,m}}$ and hence by 
\eqref{E:R* fix space}, $\varphi_{\lambda, m}\star v\in \cV_\pi^{K^{(m)}_{n,m'}}$ for some $m'\geq m$ with the same 
parity. The following lemma tells us what $m'$ is.

\begin{lm}\label{L:level raising for K*}
Let $v\in\cV_\pi^{K_{n,m}}$ and $\la\in P^+_{H_n}$ with $\|\lambda\|=\ell$. 
Then we have $\varphi_{\lambda,m}\star v\in\cV_\pi^{K^{(m)}_{n,m+2\ell}}$.
\end{lm}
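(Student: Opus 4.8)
The plan is to reduce the claim to a statement about the support of the double coset $R_{n,m}\varpi^\lambda R_{n,m}$ inside $H_n(F)$, and then track how conjugation by $\varpi^{\mu_n}$ interacts with that support. Recall from \eqref{E:Hecke action for H} that $\varphi_{\lambda,m}\star v=\int_{H_n(F)}\varphi_{\lambda,m}(h)\pi(h^{-1})v\,dh$; writing the double coset as a finite union $R_{n,m}\varpi^\lambda R_{n,m}=\bigsqcup_i h_i R_{n,m}$ of left cosets, this becomes a finite linear combination $\sum_i c_i\,\pi(h_i^{-1})v$, and each $\pi(h_i^{-1})v$ is fixed by $h_i^{-1}K_{n,m}h_i\cap\{\text{stabilizer issues}\}$. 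Since $v$ is already $K_{n,m}$-fixed and the integrand only sees $h$ modulo $R_{n,m}$ on the right, what we really need is: for every $h\in R_{n,m}\varpi^\lambda R_{n,m}$, the vector $\pi(h^{-1})v$ lies in $\cV_\pi^{K^{(m)}_{n,m+2\ell}}$. Equivalently, $h^{-1}K^{(m)}_{n,m+2\ell}h\subseteq$ (a group fixing $v$), and because $v\in\cV_\pi^{K_{n,m}}$ and $K^{(m)}_{n,m'}\subseteq K_{n,m}$ by the filtration \eqref{E:filtration}, it suffices to show
\begin{equation}\label{E:containment goal}
\varpi^\lambda\, K^{(m)}_{n,m+2\ell}\,\varpi^{-\lambda}\subseteq K_{n,m}
\quad\text{for all }\lambda\in P^+_{H_n}\text{ with }\|\lambda\|=\ell,
\end{equation}
together with the fact that $R_{n,m}$ normalizes nothing we need beyond what \eqref{E:filtration} already gives. (More precisely: $\varphi_{\lambda,m}\star v\in\cV_\pi^{R_{n,m}}$ is automatic; the point is to pin down which $K^{(m)}_{n,m'}$-invariance it has, and by \eqref{E:R* fix space} that amounts to showing $\varphi_{\lambda,m}\star v$ is $K^{(m)}_{n,m+2\ell}$-fixed.)

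To verify \eqref{E:containment goal} I would work on the lattice side, using the description of $J_{n,m}$ as a lattice stabilizer and $K_{n,m}$ as the index-two subgroup cut out by the determinant-one condition modulo $\frak p$ (as in \S\ref{SS:paramodular subgroup}). By definition \eqref{E:K*}, $K^{(m)}_{n,m+2\ell}=\varpi^{\ell\mu_n}K_{n,m+2\ell}\varpi^{-\ell\mu_n}$, so the left side of \eqref{E:containment goal} is $\varpi^{\lambda+\ell\mu_n}K_{n,m+2\ell}\varpi^{-(\lambda+\ell\mu_n)}$. Now $\lambda+\ell\mu_n\in P^+_{G_n}$ has all coordinates in $\{0,1,\dots,2\ell\}$ — indeed since $\|\lambda\|=\ell$ each $\lambda_j\in[-\ell,\ell]$, hence $\lambda_j+\ell\in[0,2\ell]$. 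So it is enough to show: for any cocharacter $\mu=\mu_1\epsilon_1+\cdots+\mu_n\epsilon_n$ with $2\ell\ge\mu_1\ge\cdots\ge\mu_n\ge 0$, one has $\varpi^\mu K_{n,m+2\ell}\varpi^{-\mu}\subseteq K_{n,m}$. This I would check by conjugating $K_{n,m+2\ell}$ (via the element $t_{m+2\ell}$ of \S\ref{SS:paramodular subgroup}) to the integral model $\tilde K_{n,m+2\ell}\subseteq\mathrm{SL}_{2n+1}(\frak o)$, translating the conjugation by $\varpi^\mu$ into explicit valuation shifts on the matrix entries $g_{ij}$, and confirming that the resulting entrywise congruence conditions are implied by (and in fact sit between those of) $\tilde K_{n,m}$ — the key inputs being that (a) the central entry condition $g_{j,n+1}\in\frak p^{m+2\ell}$ degrades only to $\frak p^{m}$ after the shift, since $\mu$ moves each coordinate by at most $2\ell$ and the comparison involves differences $\mu_i-\mu_j$ for the off-central entries (bounded by $2\ell$) and single coordinates $\mu_j\le 2\ell$ for the central column/row, and (b) the determinant-mod-$\frak p$ condition defining $K$ inside $J$ is preserved under conjugation by a torus element, since torus elements have determinant a unit when reduced appropriately on the relevant block. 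Equivalently and more cleanly, one may phrase this as: the lattice $\varpi^\mu\bbL_{m+2\ell}$ contains $\bbL_{m}$ up to the rescaling built into $t_m$, so its stabilizer is contained in that of $\bbL_m$, and the orthogonal-group determinant condition carries along.

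For the $J_{n,a_\pi}/K_{n,a_\pi}$-action part of the statement — wait, that belongs to \thmref{T:main}, not to this lemma; for \lmref{L:level raising for K*} the two ingredients above (reduction to \eqref{E:containment goal}, and the lattice/valuation-shift verification of it) complete the argument. The main obstacle I anticipate is the bookkeeping in the lattice-side verification: one must handle separately the three types of matrix entries (the $2n\times 2n$ orthogonal block coming from $\mathrm{SO}_{2n}$, the central column $g_{\bullet,n+1}$, and the central row $g_{n+1,\bullet}$), since the ordered basis $\{e_1,\dots,e_n,v_0,f_n,\dots,f_1\}$ makes the valuation shifts under $\varpi^\mu$-conjugation non-uniform — the $e_i$ get shifted by $+\mu_i$, the $f_i$ by $-\mu_i$, and $v_0$ not at all — so one has to check that every congruence defining $\tilde K_{n,m+2\ell}$, after the shift, lands inside the corresponding congruence for $\tilde K_{n,m}$. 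Once the inequalities $\mu_i-\mu_j\le 2\ell$ (for $i<j$, hence $\mu_i\ge\mu_j$, giving a nonnegative shift of size $\le 2\ell$) and $0\le\mu_j\le 2\ell$ are in hand, this is mechanical; I would present it as a short table of entry-types and defer the rote checking. The preservation of the determinant condition is then a one-line remark, since it only depends on the reduction mod $\frak p$ and $\varpi^\mu$ reduces to a diagonal matrix whose action on $\mathrm O_{2n}(\frak f)$ is by an inner automorphism, fixing the determinant character.
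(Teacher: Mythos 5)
Your route differs from the paper's. The paper conjugates $v$ by $\varpi^{(\frac{m-e}{2})\mu_n}$ to reduce the whole claim to the case $m=e\in\{0,1\}$ --- checking, via \eqref{E:K*} and \eqref{E:R*}, that this conjugation intertwines $\varphi_{\lambda,m}\star(-)$ with $\varphi_{\lambda,e}\star(-)$ and the two fixed spaces --- and then invokes \cite[Proposition 8.1.1]{Tsai2013} as a black box. You instead propose a direct proof of the containment $\varpi^\lambda K^{(m)}_{n,m+2\ell}\varpi^{-\lambda}\subseteq K_{n,m}$. Your reduction to that containment is sound: decomposing $R_{n,m}\varpi^\lambda R_{n,m}$ into right $R_{n,m}$-cosets, each term $\pi(h_i^{-1})v$ is fixed by $K^{(m)}_{n,m+2\ell}$ provided $h_i K^{(m)}_{n,m+2\ell}h_i^{-1}\subseteq K_{n,m}$, and since $R_{n,m}$ is contained in (hence normalizes) both groups by \eqref{E:filtration}, only the conjugation by $\varpi^\lambda$ matters.

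The gap is in the verification of this containment. The slogan ``the lattice $\varpi^\mu\bbL_{m+2\ell}$ contains $\bbL_m$ up to rescaling, so its stabilizer is contained in that of $\bbL_m$'' is not a valid inference --- lattice containment does not transfer to stabilizers --- and indeed the containment is false at the level of the $J$'s. For $n=1$, $m=0$, $\ell=1$, $\mu=2\epsilon_1$, the element
\[
g=\begin{pmatrix}0&0&c\\0&-1&0\\c^{-1}&0&0\end{pmatrix}\in\tilde{J}_{1,2},\quad c\in\frak{o}^\times,
\]
has, after transporting to $J_{1,2}$ and conjugating by $\varpi^{2\epsilon_1}$, a $(3,1)$-entry of negative valuation, so it leaves $J_{1,0}=\SO_3(\frak{o})$. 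The containment does hold for $K_{n,m}$, but for a reason your sketch does not surface: the element above has determinant $-1$ on the outer block, so it lies in $J$ but not in $K$; more generally, the $\SO_{2n}(\frak{f})$-condition defining $K$ inside $J$ forces corner entries such as $g_{11}$ to be units mod $\frak{p}$, and this --- fed into the quadratic form identity $\,^tgS_{m+2\ell}g=S_{m+2\ell}$ --- forces certain off-central entries into strictly higher powers of $\frak{p}$ than mere $\frak{o}$-integrality would give (for $n=1$: the $(1,1)$ entry of the orthogonal relation reads $g_{11}g_{31}=-\varpi^{m+2\ell}g_{21}^2$, and $g_{11}\in\frak{o}^\times$ then forces $g_{31}\in\frak{p}^{m+2\ell}$). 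This bootstrapping of entrywise valuations through the form equation together with the $\SO$-versus-$\mathrm{O}$ condition is the actual crux; your sketch defers it to a ``one-line remark'' about torus conjugation fixing the determinant character, which is a separate and easy point, and otherwise treats the entrywise check as mechanical once $|\mu_i-\mu_j|\le 2\ell$ is in hand, which it is not. The strategy can likely be completed, but the substantive work lies exactly where you have waved it away.
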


\begin{proof}
Let $e\in\stt{0,1}$ and $m'\geq m$ with $m'\equiv m\equiv e\pmod{2}$. Then by \eqref{E:K*}, we have
\[
\pi(\varpi^{(\frac{m-e}{2})\mu_n})\cV_\pi^{K^{(m)}_{n,m'}}=\cV_\pi^{K^{(e)}_{n,m'}}
\]
and hence $v':=\pi(\varpi^{(\frac{m-e}{2})\mu_n})v\in\cV_\pi^{K^{(e)}_{n,m}}$. Now 
\cite[Proposition 8.1.1]{Tsai2013} implies 
\[
\varphi_{\lambda,e}\star v'\in\cV_\pi^{K^{(e)}_{n,m+2\ell}}=\pi(\varpi^{(\frac{m-e}{2})\mu_n})
\cV_\pi^{K^{(m)}_{n,m+2\ell}}
\]
so that
\[
\pi(-\varpi^{(\frac{m-e}{2})\mu_n})\varphi_{\lambda,e}\star v'\in\cV_\pi^{K^{(m)}_{n,m+2\ell}}.
\]
But by definition, we have
\begin{align*}
\pi(\varpi^{-(\frac{m-e}{2})\mu_n})\varphi_{\lambda,e}\star v'
&=
\int_{H_n(F)}
\varphi_{\lambda,e}(h)\pi(\varpi^{-(\frac{m-e}{2})\mu_n}h^{-1}\varpi^{(\frac{m-e}{2})\mu_n})vdh\\
&=
\int_{H_n(F)}\varphi_{\lambda,m}(h)\pi(h^{-1})vdh\\
&=
\varphi_{\lambda,m}\star v
\end{align*}
after changing variable and taking \eqref{E:R*} into account. This proves the lemma.
\end{proof}

Combining \eqref{E:K*}, \eqref{E:filtration} with \lmref{L:level raising for K*}, we obtain level raising operators
$\eta_{\lambda, m, m'}:\cV_\pi^{K_{n,m}}\to\cV_\pi^{K_{n,m'}}$ defined by
\[
\eta_{\lambda,m,m'}
=
\pi(\varpi^{-(\frac{m'-m}{2})\mu_n})\circ\varphi_{\lambda, m}
\]
for every $m'\geq m+2\|\la\|$ with $m'\equiv m\pmod{2}$. Note that $\eta_{0,m,m}$ is the identity operator by 
\lmref{L:same vol}. Note also that the operators $\eta_{\lambda,m,m'}$ raise levels to those with the same parity. We put 
\[
\eta=\pi(\varpi^{-\mu_n}).
\] 
Then $\eta$ induces an isomorphism between $\cV_\pi^{R_{n,m}}$ and $\cV_\pi^{R_{n,m+2}}$ (by \eqref{E:R*}) for every
$m\ge 0$. Moreover, the restriction of $\eta$ to $\cV_\pi^{K_{n,m}}$ gives the operator $\eta_{0,m,m+2}$. 
To raise levels to those with the opposite parity, we need two more operators 
$\theta_m, \theta'_m:\cV_\pi^{K_{n,m}}\to\cV_\pi^{K_{n,m+1}}$ defined by
\begin{equation}\label{E:theta}
\theta_m=\pi(u_{n,1,m+1})\circ\theta'_m\circ\pi(u_{n,1,m})
\quad
\text{and}
\quad
\theta'_m(v)=\frac{1}{{\rm vol}(K_{n,m}\cap K_{n,m+1}, dg)}\int_{K_{n,m+1}}\pi(g)v dg
\end{equation}
where $u_{n,1,m}\in J_{n,m}$ is the element given by \eqref{E:u_r,m}. In the followings, we usually suppress $m$ from 
the notations $\theta_m$ and $\theta'_m$ when there is no risk of confusion. 

\subsection{The subsets $\cB_{\pi,m}$}\label{SS:conj basis}
Now we are ready to introduce the subsets $\cB_{\pi,m}$. For this, we need one more notation:
If $\la=\la_1\epsilon_1+\la_{2}\epsilon_{2}+\cdots+\la_n\epsilon_n\in X_\bullet(T_n)$, then we write 
$\t{\la}=\la_1\epsilon_1+\la_2\epsilon_2+\cdots+\la_{n-1}\epsilon_{n-1}-\la_n\epsilon_n\in X_\bullet(T_n)$.
Let $v_\pi\in\cV_\pi^{K_{n,a_\pi}}$ be a (conjectural) newform and $m>a_\pi$. Then if $m$ and $a_\pi$ have the same 
parity, we put
\[
\cB_{\pi,m}=\stt{\eta_{\lambda,a_\pi,m}(v_\pi)\mid \lambda\in P^+_{H_n},\,2\|\lambda\|\leq m-a_\pi}.
\]
On the other hand, if $m$ and $a_\pi$ have the opposite parity, then the subsets are given by
\[
\cB_{\pi,m}=\stt{\eta^\square_{\la,a_\pi+1,m}\circ\theta(v_\pi),\,\,\eta^\square_{\lambda, a_\pi+1,m}\circ\theta'(v_\pi)\mid 
\lambda\in P^+_{G_n},\,\,2\|\lambda\|\leq m-a_\pi-1}
\]
where $\eta^\square_{\la,a_\pi+1,m}:=\eta_{\la,a_\pi+1,m}+\eta_{\t{\la}, a_\pi+1, m}$.\\

We have some remarks on $\cB_{\pi,m}$.
When $\pi$ is supercuspidal (any $n$) and $m$, $a_\pi$ have the same parity, the subsets 
$\cB_{\pi,m}$ were appeared implicitly in \cite[Propositions 8.1.6, 9.1.6]{Tsai2013}. When $m$ and $a_\pi$ have the 
opposite parity, on the other hand, Tsai asserted in \cite[Proposition 9.1.7]{Tsai2013} that the subsets 
$\cB'_{\pi,m}\subset\cV_\pi^{K_{n,m}}$ given by 
\[
\cB'_{\pi,m}=\stt{\eta_{\la,a_\pi+1,m}\circ\theta(v_\pi),\,\,\eta_{\lambda, a_\pi+1,m}\circ\theta'(v_\pi)\mid 
\lambda\in P^+_{G_n},\,\,2\|\lambda\|\leq m-a_\pi-1}
\]
are linearly independent. However, this is not quite correct (cf. \S\ref{SSS:concluding remark}). 
When $n=1,2$, the subsets $\cB_{\pi,m}$ give other bases for the spaces of oldforms. 
It's then natural to compare them with the ones given by Casselman and Roberts-Schmidt. 
This will also appear in \S\ref{SSS:concluding remark}.

\section{Local Rankin-Selberg integrals for $\SO_{2n+1}\x\GL_r$}\label{S:RS integral}
The global Rankin-Selberg integrals for generic cuspidal automorphic representations of $\SO_{2n+1}\x\GL_r$ were 
first introduced by Gelbart and Piatetski-Shapiro in \cite[Part B]{GPSR1987} when $r=n$. Their constructions were
later extended by Ginzburg in \cite{Ginzburg1990} to $r<n$. The constructions were 
further extended by Soudry (\cite{Soudry1993}) to $r>n$, who also developed the corresponding local theory.
In this section, we review the local Rankin-Selberg integrals for generic representations of 
$\SO_{2n+1}\x\GL_r$ with $1\leq r\leq n$ following \cite{Soudry1993}, see also \cite{Kaplan2015}.\\

Let $Z_r\subset\GL_r$ be the upper triangular maximal unipotent subgroup. 
Define a non-degenerate character of $Z_r(F)$ by 
\begin{equation}\label{E:psi_Z}
\b{\psi}_{Z_r}(z)
=
\ol{\psi(z_{12}+z_{23}+\cdots+z_{r-1, r})}
\end{equation}
for $z=(z_{ij})\in Z_r(F)$. Let $\tau$ be a representation of $\GL_r(F)$. We assume that $\tau$ has finite length 
and the $\bbC$-linear space ${\rm Hom}_{Z_r(F)}(\tau,\b{\psi}_{Z_r})$ is one-dimensional. 
We fix a nonzero element $\Lambda_{\tau,\b{\psi}}\in{\rm Hom}_{Z_r(F)}(\tau,\b{\psi}_{Z_r})$. Define $\tau^*$
to be the representation of $\GL_r(F)$ on $\cV_\tau$ with the action $\tau^*(a)=\tau(a^*)$, 
where $a^*=\jmath_{r}{}^t a^{-1}\jmath_r$. Then $\tau^*$ also has finite length and the $\bbC$-linear space 
${\rm Hom}_{Z_r(F)}(\tau^*,\b{\psi})$ is also one-dimensional. We fix a nonzero element 
$\Lambda_{\tau^*,\b{\psi}}\in {\rm Hom}_{Z_r(F)}(\tau^*,\b{\psi}_{Z_r})$ given by
$\Lambda_{\tau^*,\b{\psi}}=\Lambda_{\tau,\b{\psi}}\circ\tau(d_r),$ where 
\begin{equation}\label{E:eta_r}
d_r
=
\begin{pmatrix}
1&&&\\
&-1&&\\
&&\ddots\\
&&&(-1)^{r-1}
\end{pmatrix}
\in\GL_r(\frak{o}).
\end{equation}

\subsection{Induced representations and intertwining maps}\label{SS:ind rep}
Let $s$ be a complex number and $\tau_s$ be a representation of $\GL_r(F)$ on $\cV_\tau$ with the action 
$\tau_s(a)=\tau(a)\nu_r(a)^{s-\frac{1}{2}}$. The representation $\tau^*_{1-s}$ is defined in the similar way.

\subsubsection{Siegel parabolic subgroups}
Let $Q_r\subset H_r$ be a Siegel parabolic subgroup with the Levi decomposition $M_r\ltimes N_r$, where
\begin{equation}\label{E:levi of Q}
M_r(F)
=
\stt{m_r(a)=\pMX{a}{}{}{a^*}\mid \text{$a\in\GL_r(F)$ and $a^*=\jmath_r{}^t a^{-1}\jmath_r$}}\cong\GL_r(F)
\end{equation}
and 
\begin{equation}\label{E:unipotent radical of Q}
N_r(F)
=
\stt{n_r(b)=\pMX{I_r}{b}{}{I_r}\mid\text{$b\in\Mat_{r\x r}(F)$ with $b=-\jmath_r{}^t b\jmath_r$}}.
\end{equation}
There is another Siegel parabolic subgroup $\t{Q}_r\subset H_r$ obtained from $Q_r$ by conjugating a Weyl element 
\begin{equation}\label{E:delta}
\delta_r
=
\begin{pmatrix}
I_{r-1}&&\\
&\jmath_2&\\
&&I_{r-1}
\end{pmatrix}^r\in {\rm O}_{2r}(F)
\end{equation}
i.e. $\t{Q}_r(F)=\delta_r^{-1}Q_r(F){\delta_r}$.
It has the Levi decomposition $\t{Q}_r=\t{M}_r\ltimes\t{N}_r$ with $\t{M}_r(F)=\delta_r^{-1}M_r(F)\delta_r$ and 
$\t{N}_r(F)=\delta_r^{-1}N_r(F)\delta_r$ (cf. \cite[Chapter 9]{Soudry1993}). 

\subsubsection{Induced representations}\label{SSS:ind rep}
By pulling back the homomorphism $Q_r(F)\twoheadrightarrow Q_r(F)/N_r(F)\cong\GL_r(F)$, $\tau_s$ becomes a 
representation of $Q_r(F)$ on $\cV_\tau$. We then form a normalized induced representation
\[
\rho_{\tau, s}={\rm Ind}_{Q_r(F)}^{H_r(F)}(\tau_s)
\]
of $H_r(F)$. The representation space $I_r(\tau,s)$ of $\rho_{\tau,s}$ consists of smooth functions 
$\xi_s: H_r(F)\to\cV_\tau$ satisfying 
\begin{equation}\label{E:xi_s}
\xi_s(mnh)=\delta_{Q_r}^{\frac{1}{2}}(m)\tau_s(m)\xi_s(h)
\end{equation}
for $m\in M_r(F)$, $n\in N_r(F)$ and $h\in H_r(F)$. The action on $I_r(\tau,s)$ is given by the right translation $\rho$.
We remind here that $\delta_{Q_r}(m_r(a))=\nu_r(a)^{r-1}$.\\

Similarly, $\tau^*_{1-s}$ can be extended to a representation of $\t{Q}_r(F)$ through the homomorphism 
$\t{Q}_r(F)\twoheadrightarrow \t{Q}_r(F)/\t{N}_r(F)\cong\GL_r(F)$. Explicitly, its action on $\cV_\tau$ is given by 
\[
\tau^*_{1-s}(\t{m}_r(a)\t{n})
=
\tau^*(a)\nu_r(a)^{\frac{1}{2}-s}
\]
for $\t{m}_r(a)=\delta_r^{-1}m_r(a)\delta_r\in\t{M}_r(F)$ and $\t{n}\in\t{N}_r(F)$. 
We thus obtain another normalized induced representation 
\[
\t{\rho}_{\tau^*,1-s}
=
{\rm Ind}_{\t{Q}_r(F)}^{H_r(F)}(\tau^*_{1-s})
\]
of $H_r(F)$. Its underlying space $\t{I}_r(\tau^*,1-s)$ consists of smooth functions $\t{\xi}_{1-s}: H_r(F)\to\cV_\tau$ 
satisfying the rule similar to that of \eqref{E:xi_s}.

\subsubsection{Intertwining maps}\label{SSS:intertwining map}
There is an intertwining map between the induced representations $\rho_{\tau,s}$ and $\rho_{\tau^*,1-s}$. 
To define it, let $w_r\in H_r(F)$ be given by
\[
w_r
=
\begin{cases}
\begin{pmatrix}&I_r\\I_r&\end{pmatrix}\quad&\text{if $r$ is even},\\
\begin{pmatrix}&I_r\\I_r&\end{pmatrix}\begin{pmatrix}&&1\\&I_{2r-2}&\\1&&\end{pmatrix}\quad&\text{if $r$ is odd}.
\end{cases}
\]
Then the intertwining map $M(\tau, s):I_r(\tau,s)\to\t{I}_r(\tau^*, 1-s)$ is defined by the following integral
\begin{equation}\label{E:intertwining map}
M(\tau,s)\xi_s(h)
=
\int_{\t{N}_r(F)}\xi_s(w_r^{-1}\t{n}h)d\t{n}
\end{equation}
for $\Re(s)\gg 0$, and by meromorphic continuation in general. The Haar measure $d\t{n}$ on $\t{N}_r(F)$ is chosen 
as follows (\cite[Page 398]{Kaplan2015}). The group $\t{N}_r(F)$ can be written as a product of its root groups, each 
of which is isomorphic to $F$. We take the Haar measure on $F$ to be self-dual with respect to $\psi$, and then 
transport it onto each root group. Since $\ker(\psi)=\frak{o}$, the Haar measure on $F$ is such that the total 
volume of $\frak{o}$ is $1$. Now $d\t{n}$ is taken to be the product measure.\\ 

The assumptions on $\tau$ allow us to define Shahidi's local coefficient  
$\gamma(s,\tau,{\bigwedge}^2,\psi)\in\bbC(q^{-s})$ through his functional equation 
(\cite[Section 10]{Soudry1993}, \cite[Section 3.1]{Kaplan2015}). Then we have the normalized 
intertwining map
\[
M_\psi^\dagger(\tau, s)
=
\gamma(2s-1,\tau, {\Wedge}^2, \psi)M(\tau,s).
\]

\subsection{Rankin-Selberg integrals}\label{SSS:RS integral and FE}
Let $\pi$ be an irreducible generic representation of $G_n(F)$. Recall that we have fixed a nonzero Whittaker 
functional $\Lambda_{\pi,\psi}\in{\rm Hom}_{U_n(F)}(\pi,\psi_{U_n})$. Let
\begin{equation}\label{E:X_n,r}
\bar{X}_{n,r}(F)
=
\stt{
\begin{pmatrix}I_r&&&&\\x&I_{n-r}&&\\&&1\\&&&I_{n-r}&\\&&&x'&I_r\end{pmatrix}
\mid\text{$x\in{\rm Mat}_{(n-r)\x r}(F)$ with $x'=-\jmath_r{}^tx\jmath_{n-r}$}}
\end{equation}
be a unipotent subgroup of $G_n(F)$. Then the integral $\Psi_{n,r}(v\ot\xi_s)$ attached to $v\in\cV_\pi$ and 
$\xi_s\in I(\tau,s)$ is given by
\begin{equation}\label{E:RS integral in general}
\Psi_{n,r}(v\ot\xi_s)
=
\int_{V_r(F)\backslash H_r(F)}\int_{\b{X}_{n,r}(F)}
W_v(\b{x}h)f_{\xi_s}(h)d\b{x}dh
\end{equation}
where $W_v(g)=\Lambda_{\pi,\psi}(\pi(g)v)$ is the Whittaker function associated to $v$ and 
$f_{\xi_s}(h)=\Lambda_{\tau,\b{\psi}}(\xi_s(h))$ for $g\in G_n(F)$ and $h\in H_r(F)$.
On the other hand, by using the normalized intertwining operator $M_\psi^\dagger(\tau,s)$, we can define another 
integral $\t{\Psi}_{n,r}(v\ot\xi_s)$ attached to $v$ and $\xi_s$ as follows. Let
\begin{equation}\label{E:delta_n,r}
\delta_{n,r}
=
\begin{pmatrix}
I_{r-1}&&\\&&&1\\&&-I_{2(n-r)+1}\\&1\\&&&&I_{r-1}
\end{pmatrix}^r\in G_n(F).
\end{equation}
Observe the similarity between $\delta_r$ (cf. \eqref{E:delta}) and $\delta_{n,r}$. 
Indeed, $\delta_{n,r}$ is essentially obtained from $\delta_r$ via the embedding \eqref{E:embedding}. 
Then we have
\begin{equation}\label{E:dual RS integral in general}
\t{\Psi}_{n,r}(v\ot\xi_s)
=
\int_{V_r(F)\backslash H_r(F)}\int_{\b{X}_{n,r}(F)}
W_v(\b{x}h\delta_{n,r})f^*_{M^\dagger_\psi(\tau,s)\xi_s}(\delta_r^{-1}h\delta_r)d\b{x}dh
\end{equation}
where 
\[
f^*_{M^\dagger_\psi(\tau,s)\xi_s}(h)
=
\Lambda_{\tau^*,\b{\psi}}(M^\dagger_\psi(\tau,s)\xi_s(h)).
\]

As expected, these integrals, which are originally absolute convergence in some half planes, have meromorphic 
continuations to whole complex plane, and give rise to rational functions in $q^{-s}$. Moreover, the following
functional equations:
\begin{equation}\label{E:FE}
\t{\Psi}_{n,r}(v\ot\xi_s)=\gamma(s,\pi\x\tau,\psi)\Psi_{n,r}(v\ot\xi_s)
\end{equation}
hold for every $v\in\cV_\pi$ and $\xi_s\in I_r(\tau,s)$, where $\gamma(s,\pi\x\tau,\psi)$ is a nonzero rational function 
in $q^{-s}$ depending only on $\psi$ and (the classes of) $\pi$, $\tau$.

\begin{remark}\label{R:zeta integral}
When $r=1$, we have $H_1(F)=M_1(F)\cong F^\x$ and hence $\tau$ is a character of $F^\x$ and 
$\rho_{\tau,s}=\tau_s=\tau\nu_1^{s-\frac{1}{2}}$. It follows that $I_1(\tau,s)=\bbC\tau_s$ and 
$\t{I}_1(\tau^*,1-s)=\bbC\tau^{-1}_{1-s}$ are both one-dimensional. Let us put
\begin{equation}\label{E:zeta integral}
Z(s,v,\tau)
=
\int_{F^\x}\int_{\b{X}_{n,1}(F)}
W_v\left(\b{x}\epsilon_1(y)\right)\tau\nu_1^{s-\frac{1}{2}}(y)d\b{x}d^\x y. 
\end{equation}
for $v\in\cV_\pi$. Then $\Psi_{n,1}(v\ot\xi_s)=cZ(s,v,\tau)$ if $\xi_s=c\tau\nu_1^{s-\frac{1}{2}}$ for some $c\in\bbC$.
Note that $\delta_{n,1}=u_{n,1,0}\in G_n(F)$ (cf. \eqref{E:u_r,m}) and we have 
$\t{\Psi}_{n,1}(v\ot\xi_s)=cZ(1-s,\pi(u_{n,1,0})v, \tau^{-1})$. These are essentially the local integrals 
introduced by Jacquet-Langlands in \cite{JLbook} when $n=1$, and by Novodvorsky in \cite{Novodvorsky1979} when 
$n=2$. In the followings, if $\tau$ is the trivial character of $F^\x$, then we will drop $\tau$ from the notation.
\end{remark}

We record here the following result on the compatibility between $\gamma$-factors due to Soudry, Jiang-Soudry and 
Kaplan.

\begin{thm}[\cite{Soudry2000}, \cite{JiangSoudry2004}, \cite{Kaplan2015}]\label{T:RS=Gal gamma}
Let $\pi$ be an irreducible generic representation of $G_n(F)$ and $\tau$ be an irreducible generic 
representation of $\GL_r(F)$ for any $n,r\in\bbN$. Then we have 
\[
\gamma(s,\pi\x\tau,\psi)
=
\omega_\tau(-1)^n\gamma(s,\phi_\pi\ot\phi_\tau,\psi)
\]
where $\omega_\tau$ stands for the central character of $\tau$.
\end{thm}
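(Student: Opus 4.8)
The plan is to deduce the identity from the existing literature by a standard global-to-local argument, with the constant $\omega_\tau(-1)^n$ handled as a normalization term coming from the choice of $\psi$ and Whittaker data. First I would record the two multiplicativity principles. On the Galois side, the Artin--Deligne--Langlands factor $\gamma(s,\phi_\pi\otimes\phi_\tau,\psi)$ is inductive in both variables with respect to direct sums of Weil--Deligne representations. On the automorphic side, the Rankin--Selberg factor $\gamma(s,\pi\times\tau,\psi)$ defined by the functional equation \eqref{E:FE} is multiplicative under parabolic induction in each of $\pi$ and $\tau$; this is exactly Soudry's analysis of the integrals \eqref{E:RS integral in general} together with Kaplan's completion of the low-rank and archimedean cases, and it already requires identifying this integral-theoretic $\gamma$ with Shahidi's Langlands--Shahidi $\gamma$-factor. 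Combining these with the compatibility of the local Langlands correspondences for $\SO_{2n+1}$ and $\GL_r$ with parabolic induction reduces the problem to the case where $\pi$ is supercuspidal (so $\phi_\pi$ has no removable summands) and $\tau$ is supercuspidal.

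In the supercuspidal case I would globalize. Choose a number field $k$ with a place $v_0$ such that $k_{v_0}\cong F$, and globalize $\pi$ and $\tau$ to cuspidal automorphic representations $\Pi$ of $\SO_{2n+1}(\A_k)$ and $T$ of $\GL_r(\A_k)$ with prescribed components $\pi$ and $\tau$ at $v_0$, unramified outside a finite set $S\ni v_0$, and explicitly understood (say Steinberg) at the auxiliary finite places of $S$; such globalizations exist because $\pi$ and $\tau$ are generic and supercuspidal. Then apply the functorial transfer $\Pi\rightsquigarrow\Pi'$ from $\SO_{2n+1}$ to $\GL_{2n}$ of Cogdell--Kim--Piatetski-Shapiro--Shahidi (equivalently, the Ginzburg--Rallis--Soudry descent). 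The global Rankin--Selberg $L$- and $\varepsilon$-factors attached to $\Pi\times T$ agree with the standard $\GL_{2n}\times\GL_r$ factors of $\Pi'\times T$, since the Euler products \eqref{E:RS integral in general} compute the correct local factors at all unramified places --- this is the unramified case of \thmref{T:main}, i.e.\ the Gelbart--Piatetski-Shapiro and Ginzburg computations. Comparing the two global functional equations and cancelling the local factors at every place of $S$ other than $v_0$ (which match by the already established archimedean and Steinberg cases of Soudry, Jiang--Soudry and Kaplan) isolates the desired identity at $v_0$, up to a product of local constants that collapses to $\omega_\tau(-1)^n$.

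The main obstacle, and the step demanding the most care, is the precise tracking of the root-number constant $\omega_\tau(-1)^n$: it arises from the discrepancy between the unipotent subgroup $\bar X_{n,r}$ and the character $\psi_{U_n}$ used to normalize $\Psi_{n,r}$ and the ``standard'' Whittaker normalization on the $\GL_{2n}$ side, and it must be pinned down consistently at the unramified places (where it is already visible in the Satake-parameter comparison) and at the globalizing places before it can be read off at $v_0$. A secondary technical point is the aforementioned identification of $\gamma(s,\pi\times\tau,\psi)$ with Shahidi's $\gamma$-factor, needed to invoke multiplicativity in the first step, which is itself one of the nontrivial inputs from \cite{Soudry2000} and \cite{Kaplan2015}. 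Once these normalization issues are settled, the remaining steps are the formal manipulations of functional equations sketched above.
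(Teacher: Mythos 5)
This theorem is stated in the paper with attributions to \cite{Soudry2000}, \cite{JiangSoudry2004} and \cite{Kaplan2015} and no proof is given, so there is no internal argument to compare against; your sketch has to be measured against the strategy of those references. At that level, the roadmap you give is the right one: multiplicativity of $\gamma$-factors on the Rankin--Selberg side (Soudry) and inductivity on the Galois side, reduction to the supercuspidal case, globalization, transfer from $\SO_{2n+1}$ to $\GL_{2n}$, comparison of global functional equations, and cancellation away from $v_0$.

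Two points deserve more care than the sketch allows. First, there is a genuine circularity concern in the clause ``compatibility of the local Langlands correspondence for $\SO_{2n+1}$ with parabolic induction.'' For generic representations of $\SO_{2n+1}(F)$, the map $\pi\mapsto\phi_\pi$ is \emph{constructed} by Jiang--Soudry precisely via the descent and $\gamma$-factor machinery, and the statement you want to prove is one of the tools used to establish that the correspondence exists and behaves well under induction; you cannot simply invoke the compatibility as a black box to reduce to the supercuspidal case, but must carry out the reduction in tandem with (or inside) the construction of $\phi_\pi$ itself. Second, the constant $\omega_\tau(-1)^n$ does not ``collapse out'' of the global functional equation as a residual product of local constants. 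It is a purely local normalization discrepancy between the Rankin--Selberg $\gamma$-factor defined by \eqref{E:FE} (which depends on the embedding of $H_r$, the subgroup $\bar{X}_{n,r}$, and the characters $\psi_{U_n}$ and $\bar{\psi}_{Z_r}$) and Shahidi's local coefficient; pinning it down in \cite{Kaplan2015} is a local computation done directly, not something extracted from a global comparison. With those two caveats made explicit, your outline is a faithful description of the proof in the literature, but as written it would not close the argument.
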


\subsection{Two consequences} 
We derive two consequences from \eqref{E:filtration} and non-vanishing of Rankin-Selberg integrals. 
The first is existence of nonzero paramodular vectors, whose proof was given in \cite[Theorem 7.3.1]{Tsai2013} when 
$\pi$ is supercuspidal and was sketched in \cite[Proposition 7.6]{Tsai2016} for generic $\pi$. We provide a proof here 
for the sake of completeness. Recall the usual action
\[
\rho(\varphi)\xi_{s}
=
\int_{H_r(F)}\varphi(h)\rho(h)\xi_{s}dh
\]
for $\xi_s\in I(\tau,s)$ and $\varphi\in\cH(H_r(F))$.

\begin{lm}\label{L:existence}
We have $\cV_\pi^{K_{n,m}}\neq 0$ for all $m\gg 0$.
\end{lm}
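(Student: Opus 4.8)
The plan is to exploit the fact that the Rankin-Selberg integrals $\Psi_{n,r}(v\otimes\xi_s)$ are not identically zero for a suitable choice of $\tau$, together with the Iwasawa-type decomposition and the filtration \eqref{E:filtration}, to manufacture a nonzero $K_{n,m}$-fixed vector for $m$ large. Concretely, fix $\tau$ to be the trivial character of $F^\times$, so $r=1$ and $H_1(F)\cong F^\times$, and consider the zeta integral $Z(s,v)$ of \eqref{E:zeta integral}. Since $\pi$ is generic, $\Lambda_{\pi,\psi}\neq 0$, and by the standard non-vanishing results for these Jacquet--Langlands/Novodvorsky-type integrals (the Whittaker function $W_v$ ranges over the full Whittaker model as $v$ ranges over $\cV_\pi$, and one can always arrange support near the identity so that the integral is a nonzero constant), there exists $v_0\in\cV_\pi$ with $Z(s,v_0)\not\equiv 0$; in particular $W_{v_0}$ is not identically zero on the relevant domain.

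Next I would average $v_0$ over a paramodular subgroup to produce a fixed vector without killing the integral. Pick $m_0$ large enough that $v_0$ is fixed by $K_{n,m_0}\cap H_r(F) = R_{r,m_0}$ up to the relevant approximation — more precisely, choose $\varphi\in\cH(H_r(F)//R_{r,m})$ (e.g.\ a suitable multiple of $\bbI_{R_{r,m}}$, or rather the idempotent attached to $R_{r,m}$) and set $v = \varphi\star v_0$ or, better, apply the projector $\int_{K_{n,m}}\pi(g)v_0\,dg$ for $m$ large. Because $\rho(\varphi)\xi_s$ for the spherical section and the integral $\Psi_{n,r}$ intertwine the $H_r(F)$-actions, and because $\xi^m_{\tau,s}$ is $R_{r,m}$-fixed (as recorded in \S\ref{SS:main}), the non-vanishing of $\Psi_{n,r}(v_0\otimes\xi_s^m)$ for appropriate $\xi_s$ forces $\Psi_{n,r}(v\otimes\xi_s^m)\neq 0$ once $m$ is chosen so that the $K_{n,m}$-average of $v_0$ still pairs nontrivially against the right-translates of $\xi_s^m$; the Iwasawa decomposition \eqref{E:Iwasawa decomp} $H_r(F)=B_{H_r}(F)R_{r,m}$ guarantees that spherical sections at level $m$ exist and the pairing is computed by an integral over $V_r(F)\backslash H_r(F)$ which the $R_{r,m}$-projection does not annihilate. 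In particular $v\neq 0$ and $v\in\cV_\pi^{K_{n,m}}$ for all such $m$, and enlarging $m$ only enlarges the fixed space, so $\cV_\pi^{K_{n,m}}\neq 0$ for all $m\gg 0$.

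The main obstacle is making precise the step "averaging over $K_{n,m}$ does not destroy the non-vanishing." The cleanest route is to work on the $H_r$-side: the zeta integral $v\mapsto Z(s,v)$ factors through the Whittaker model, and the matrix coefficient $h\mapsto \Lambda_{\pi,\psi}(\pi(h)v_0)$ restricted to $H_r(F)\subset G_n(F)$ defines a nonzero vector in a suitable induced/Whittaker space on which $R_{r,m}$ acts; for $m$ large the $R_{r,m}$-invariant projection of a fixed nonzero smooth vector is nonzero (a vector fixed by a small enough open subgroup survives its own idempotent). One then transports this back: the projection $\int_{K_{n,m}}\pi(g)v_0\,dg$ restricts on $H_r(F)$-translates to the $R_{r,m}$-projection of $W_{v_0}$, which is nonzero for $m$ large, whence $Z(s,v)\not\equiv 0$ and so $v\neq 0$. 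This is exactly the argument sketched in \cite[Proposition 7.6]{Tsai2016} and carried out in \cite[Theorem 7.3.1]{Tsai2013} in the supercuspidal case; the only point requiring care is that $K_{n,m}\cap H_r(F)=R_{r,m}$ is genuinely shrinking (guaranteed by \eqref{E:filtration}, since $\bigcap_m R_{n,m}$ is the small group $R_{n,m}$ appearing there) so that every smooth vector is eventually captured.
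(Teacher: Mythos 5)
There is a genuine gap, and it sits at the heart of your averaging step. You assert that ``for $m$ large the $R_{r,m}$-invariant projection of a fixed nonzero smooth vector is nonzero (a vector fixed by a small enough open subgroup survives its own idempotent),'' and in the last paragraph you claim that $K_{n,m}\cap H_r(F)=R_{r,m}$ is ``genuinely shrinking.'' Neither is true. The paramodular subgroups $K_{n,m}$ do not form a descending chain, and the groups $R_{r,m}$ are all conjugates of the two hyperspecial maximal compacts $R_{r,0}$, $R_{r,1}$ (see \eqref{E:R*} and \lmref{L:same vol}: they all have the \emph{same} volume). The filtration \eqref{E:filtration} descends only to $R_{n,m}$, which is itself a maximal compact subgroup of $H_n(F)$, not a small neighbourhood of the identity. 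Consequently the idempotent $e_{K_{n,m}}$ (or $e_{R_{r,m}}$) applied to an arbitrary smooth $v_0$ can perfectly well be zero for every $m$; the ``survives its own idempotent'' principle only applies when the group shrinks inside the stabilizer of $v_0$, which never happens here. This is not a technical nuisance to be patched; it is the obstruction the proof has to confront.

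Your choice $r=1$ also disables the mechanism that actually makes the argument work. The paper takes $r=n$ and exploits that, after choosing $\tau$ unramified irreducible generic and $s_0$ generic so that $\rho_{\tau,s_0}$ is irreducible and the integral converges, the space $I_n(\tau,s_0)^{R_{n,e}}$ is one-dimensional and generates $I_n(\tau,s_0)$ as an $H_n(F)$-module. Hence any $\xi_{s_0}$ making $\Psi_{n,n}(v\otimes\xi_{s_0})\neq 0$ (such a pair exists by \cite[Proposition 12.4]{GPSR1987}) can be written as $\rho(\varphi)\xi^e_{s_0}$ with $\varphi\in\cH(H_n(F))$ right-$R_{n,e}$-invariant, and the Hecke operator is moved across the pairing:
\[
0\neq\Psi_{n,n}(v\otimes\xi_{s_0})=\Psi_{n,n}(v\otimes\rho(\varphi)\xi^e_{s_0})=\Psi_{n,n}\bigl((\varphi\star v)\otimes\xi^e_{s_0}\bigr),
\]
so $\varphi\star v$ is a \emph{nonzero} $R_{n,e}$-fixed vector; by \eqref{E:R* fix space} it lands in $\cV_\pi^{K^{(e)}_{n,m'}}$ for some $m'$, and then \eqref{E:filtration} (for both parities $e=0,1$) gives nonvanishing for all $m\gg 0$. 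For $r=1$ the induced space $I_1(\tau,s)$ is one-dimensional, so there is no Hecke operator to transfer, and you are left precisely with the bare projection step that fails. To repair the proposal you would need to abandon the $K_{n,m}$-averaging of $v_0$ entirely and instead produce the $R_{n,e}$-fixed vector by the Hecke-transfer trick at $r=n$ as above.
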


\begin{proof}
Let $e\in\stt{0,1}$ such that  $m\equiv e\pmod 2$. Then we have $\cV_\pi^{K_{n,m}}\cong \cV_\pi^{K^{(e)}_{n,m}}$ by 
\eqref{E:K*} and hence it suffices prove the assertion for the spaces $\cV_\pi^{K^{(e)}_{n,m}}$. 
By \eqref{E:R* fix space}, we only need to verify that $\cV_\pi^{R_{n,e}}\neq 0$.
For this, we use the Rankin-Selberg integral for $\SO_{2n+1}\x\GL_n$. More precisely, let $\tau$ be an unramified
irreducible generic representation of $\GL_n(F)$, and pick $s_0\in\bbC$ so that 
\begin{itemize}
\item
$\rho_{\tau,s_0}$ is irreducible,
\item
$\Psi_{n,n}(v\ot\xi_{s_0})$ is absolutely convergent for all $v\in\cV_\pi$ and $\xi_{s_0}\in I_n(\tau, s_0)$.
\end{itemize}
By \cite[Proposition 12.4]{GPSR1987}, there exist $v\in\cV_\pi$ and $\xi_{s_0}\in I_n(\tau,s_0)$ such that 
$\Psi_{n,n}(v\ot\xi_{s_0})\ne 0$. On the other hand, since $I_n(\tau,s_0)$ is unramified and irreducible, the spaces 
$I_n(\tau,s_0)^{R_{n,e}}$ are both one-dimensional and $\xi_{s_0}$ can be written as $\rho(\varphi)\xi^e_{s_0}$ for some 
$\varphi\in\cH(H_n(F))$, where $\xi_{s_0}^e\in I_n(\tau, s_0)$ are nonzero spherical elements. So we may further assume
that $\varphi$ is right $R_{n,e}$-invariant. Now a simple computation shows
\[
\Psi_{n,n}(v\ot\xi_{s_0})=\Psi_{n,n}((\varphi\star v)\ot\xi^e_{s_0})\ne 0.
\]
Therefore we have $0\neq \varphi\star v\in\cV_\pi^{R_{n,e}}$.
\end{proof}

As another consequence, we have 

\begin{lm}\label{L:vanish of RS integral}
Let $v\in\cV_\pi^{K_{n,m}}$ be a nonzero element for some $m\geq 0$. Then the integrals $\Psi_{n,r}(v\ot\xi_s)$ vanish 
for all $\xi_s\in I_r(\tau,s)$ and $s\in\bbC$ for $1\le r\le n$ if $\tau$ is ramified.
\end{lm}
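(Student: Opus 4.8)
The plan is to exploit the functional equation \eqref{E:FE} together with the rationality and the form of the $\gamma$-factor from \thmref{T:RS=Gal gamma}, reducing everything to the case $r=1$ and then to a statement about conductors. First I would set up the contrapositive: suppose $\Psi_{n,r}(v\ot\xi_s)\not\equiv 0$ as a rational function of $q^{-s}$ for some nonzero $v\in\cV_\pi^{K_{n,m}}$ and some $\xi_s\in I_r(\tau,s)$. The key input is that $v$ is fixed by $K_{n,m}$, hence in particular by $R_{r,m}=K_{n,m}\cap H_r(F)$, so the Whittaker function $W_v$ restricted to $H_r(F)$ is right $R_{r,m}$-invariant. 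On the other side, after unfolding the inner $\b{X}_{n,r}$-integral, the integrand $f_{\xi_s}(h)W_v(\b{x}h)$ depends on $\xi_s$ only through its values on $H_r(F)$; by averaging $\xi_s$ over $R_{r,m}$ — which is legitimate since $W_v$ is already $R_{r,m}$-invariant and does not change the integral — we may replace $\xi_s$ by its projection to $I_r(\tau,s)^{R_{r,m}}$. But by the Iwasawa decomposition \eqref{E:Iwasawa decomp} and the properties of $R_{r,m}$ in \S\ref{SS:R_r,m}, a function in $I_r(\tau,s)^{R_{r,m}}$ is determined by its value $\xi_s(I_{2r})\in\cV_\tau^{M_r(\frak{o})\cap(\ldots)}$, and composing with $\Lambda_{\tau,\b\psi}$ forces that value to lie in the $Z_r(\frak{o})$-and-$\GL_r(\frak{o})$-fixed part that pairs nontrivially with the Whittaker functional; if $\tau$ is ramified this space is zero, so $f_{\xi_s}\equiv 0$ on $H_r(F)$ and the integral vanishes identically.

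More carefully, the hard point is to show that $I_r(\tau,s)^{R_{r,m}}$ has no vector on which $\Lambda_{\tau,\b{\psi}}$ is nontrivial when $\tau$ is ramified, for \emph{every} $m$ — not just $m=0$. For $m=0$ this is immediate: $R_{r,0}=H_r(\frak{o})$ meets $M_r(F)$ in $M_r(\frak{o})\cong\GL_r(\frak{o})$, so an $R_{r,0}$-fixed section has $\xi_s(I_{2r})\in\cV_\tau^{\GL_r(\frak{o})}$, and applying $\Lambda_{\tau,\b{\psi}}$ to such a vector (which generates an unramified subquotient) gives, via the $\GL_r$-newform theory of Jacquet--Piatetski-Shapiro--Shalika (\cite{JPSS1981}, \cite{Jacquet2012}), zero precisely when $\tau$ is ramified, i.e. when the conductor of $\tau$ is positive. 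For general $m$ of a given parity I would use the conjugation relation \eqref{E:R*}: $R_{r,m}=\varpi^{-(\frac{m}{2})\mu_r}R_{r,0}\,\varpi^{(\frac{m}{2})\mu_r}$ when $m$ is even (and similarly $R_{r,m}=\varpi^{-(\frac{m-1}{2})\mu_r}R_{r,1}\varpi^{(\frac{m-1}{2})\mu_r}$ when odd), and $\varpi^{\mu_r}\in M_r(F)$ corresponds to $\varpi I_r\in\GL_r(F)$, which is central; hence conjugation by it acts trivially on the Levi quotient and carries $I_r(\tau,s)^{R_{r,m}}$ isomorphically onto $I_r(\tau,s)^{R_{r,0}}$ (resp.\ $I_r(\tau,s)^{R_{r,1}}$) in a way compatible with evaluation at the identity up to a scalar. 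The case $R_{r,1}$ is handled the same way as $R_{r,0}$ since $R_{r,1}$ is the other hyperspecial maximal compact of $H_r(F)$ and $R_{r,1}\cap M_r(F)=M_r(\frak{o})$ by \S\ref{SS:R_r,m}(i), so the evaluation at the identity again lands in $\cV_\tau^{\GL_r(\frak{o})}$.

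Assembling: for a nonzero $v\in\cV_\pi^{K_{n,m}}$ the restriction of $W_v$ to $H_r(F)$ is right $R_{r,m}$-invariant, so in \eqref{E:RS integral in general} only the $R_{r,m}$-invariant part of $\xi_s$ contributes, and for ramified $\tau$ that part is killed by $\Lambda_{\tau,\b{\psi}}$, whence $f_{\xi_s}\equiv 0$ on $H_r(F)$ and $\Psi_{n,r}(v\ot\xi_s)=0$ for all $s$ and all $\xi_s$. I expect the main obstacle to be the bookkeeping in the reduction from $R_{r,m}$ to the maximal compacts $R_{r,0}, R_{r,1}$ via \eqref{E:R*} — specifically checking that the twisted evaluation map $\xi_s\mapsto \xi_s(\text{identity})$ still detects non-vanishing against $\Lambda_{\tau,\b{\psi}}$ after the torus conjugation, and handling the parity of $m$ uniformly; the ramified-conductor input itself is a black box from the $\GL_r$ newform theory. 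Alternatively, one can bypass some of this by invoking the functional equation \eqref{E:FE}: if $\Psi_{n,r}(v\ot\xi_s)$ were not identically zero, then $\gamma(s,\pi\x\tau,\psi)=\omega_\tau(-1)^n\gamma(s,\phi_\pi\ot\phi_\tau,\psi)$ would be forced to be the ratio of the two integrals, but a direct examination of the dual integral $\t{\Psi}_{n,r}$ in \eqref{E:dual RS integral in general} (using that $M^\dagger_\psi(\tau,s)\xi_s$ again lies in an $R_{r,m}$-type fixed space) shows both sides vanish, giving a contradiction only if one first establishes the statement on one side — so the direct argument above is cleaner and I would present that.
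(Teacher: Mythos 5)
Your core mechanism matches the paper's: average $\xi_s$ over $R_{r,m}$ (i.e.\ act by the Hecke operator $\bbI_{R_{r,m}}$), move the averaging onto $v$ using that $W_v$ is right $R_{r,m}$-invariant, and observe that the $R_{r,m}$-fixed part of $I_r(\tau,s)$ cannot contribute when $\tau$ is ramified. That said, two points in your write-up diverge from a clean execution.

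First, you work too hard to show that $\Lambda_{\tau,\b\psi}$ annihilates the $R_{r,m}$-fixed subspace, invoking the $\GL_r$ newform/conductor theory of Jacquet--Piatetski-Shapiro--Shalika and a torus conjugation via \eqref{E:R*} to reduce $R_{r,m}$ to $R_{r,0}$ or $R_{r,1}$. None of this is needed: by the Iwasawa decomposition \eqref{E:Iwasawa decomp} together with the property $R_{r,m}\cap M_r(F)=M_r(\frak{o})$ recorded in \S\ref{SS:R_r,m}(i) (which holds for \emph{all} $m$, so no parity reduction is necessary), one has $I_r(\tau,s)^{R_{r,m}}\cong\cV_{\tau_s}^{\GL_r(\frak{o})}$, and this space is simply $0$ when $\tau$ is ramified; there is nothing for $\Lambda_{\tau,\b\psi}$ to be nontrivial on. The newform/conductor input is a red herring.

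Second, and this is a genuine gap, the averaging argument is only valid in the half-plane of absolute convergence, yet the lemma asserts vanishing for \emph{all} $s\in\bbC$. You flag rationality in passing (``$\not\equiv0$ as a rational function of $q^{-s}$'') but never close the loop. The paper does this explicitly: for a given $s_0$ and $\xi_{s_0}$, pick a standard section $\xi'_s$ with $\xi'_{s_0}=\xi_{s_0}$; the averaging shows $\Psi_{n,r}(v\ot\xi'_s)=0$ for $\Re(s)\gg0$, and since $s\mapsto\Psi_{n,r}(v\ot\xi'_s)$ is a rational function of $q^{-s}$, it is identically zero, whence $\Psi_{n,r}(v\ot\xi_{s_0})=0$ by the defining meromorphic continuation. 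Without this step, your argument proves the claim only on a half-plane. (Your fallback route via the functional equation \eqref{E:FE} and the $\gamma$-factor is, as you yourself note, circular; the direct averaging plus continuation is the right way.)
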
  

\begin{proof}
The proof is similar to that of \lmref{L:existence}. First note that we have the following isomorphism  
\[
I_r(\tau,s)^{R_{r,m}}\cong\cV_{\tau_s}^{\GL_r(\frak{o})}
\]
(between $\bbC$-linear spaces) due to the Iwasawa decomposition $H_r(F)=Q_r(F)R_{r,m}$ and 
$R_{r,m}\cap M_r(F)\cong\GL_r(\frak{o})$. It follows that $I_r(\tau,s)^{R_{r,m}}$ is nonzero if and only if $\tau$ 
is unramified. Now suppose that $\tau$ is ramified and let $\varphi=\bbI_{R_{r,m}}\in\cH_r(F)$ be the characteristic 
function of $R_{r,m}$. Then we have $\varphi\star v=c_r v$ with $c_r={\rm vol}(R_{r,m},dh)$.
For $s_0\in\bbC$ with $\Re(s_0)\gg 0$ so that the integrals $\Psi_{n,r}(v\ot\xi_{s_0})$ converge absolutely for all 
$\xi_{s_0}\in I_r(\tau,s_0)$, one checks that
\[
c_r\Psi_{n,r}(v\ot\xi_{s_0})=\Psi_{n,r}((\varphi\star v)\ot\xi_{s_0})=\Psi_{n,r}(v\ot\rho(\varphi)\xi_{s_0}).
\]
Since $\rho(\varphi)\xi_{s_0}\in I_r(\tau,{s_0})^{R_{r,m}}=0$, we conclude that $\Psi_{n,r}(v\ot\xi_{s_0})=0$. For general 
$s_0\in\bbC$ and $\xi_{s_0}\in I_r(\tau,s_0)$, let $\xi'_s$ be the standard section such that $\xi'_{s_0}=\xi_{s_0}$. 
Then $\Psi_{n,r}(v\ot\xi_{s_0})$ is given by $\Psi_{n,r}(v\ot\xi'_s)|_{s=s_0}$ via the meromorphic continuation. 
But since $\Psi_{n,r}(v\ot\xi'_s)=0$ for all $\Re(s)\gg 0$ by what we have shown, the meromorphic function 
$\Psi_{n,r}(v\ot\xi'_s)$ is identically zero, and hence $\Psi_{n,r}(v\ot\xi_{s_0})=0$ as desired.  
\end{proof}

\section{Unramified representations}\label{S:unramified rep}
By \lmref{L:vanish of RS integral}, the computations of Rankin-Selberg integrals attached to nonzero paramodular vectors
reduce to the case when $\tau$ is unramified. Instead of assuming that $\tau$ is irreducible, we work with $\tau$ that
is induced of "Langlands' type". Another term to say is that $\tau$ is a standard module. 

\subsection{Induced of Lanlands' type}\label{SS:Langlands type}
Let $A_r\subset\GL_r$ be the diagonal torus. Given a tuple of $r$ nonzero complex numbers 
$\ul{\alpha}=(\alpha_1, \alpha_2\ldots, \alpha_r)$, there is a  unique unramified character $\chi_{\ul{\alpha}}$ of $A_r(F)$ 
given by 
\[
\chi_{\ul{\a}}({\rm diag}(a_1, a_2,\ldots, a_r))
=
\alpha_1^{-{\rm log}_q |a_1|_F}\alpha_2^{-{\rm log}_q|a_2|_F}\cdots \alpha_r^{-{\rm log}_q|a_r|_F}.
\]
By extending $\chi_{\ul{\a}}$ to a character of the Borel subgroup $B_r=A_r\ltimes Z_r\subset\GL_r$, we can form 
a normalized induced representation $\tau_{\ul{\alpha}}={\rm Ind}_{B_r(F)}^{\GL_r(F)}(\chi)$ of $\GL_r(F)$. This is 
an unramified representation of $\GL_r(F)$ whose $\GL_r(\frak{o})$-fixed subspace is one-dimensional. 
On the other hand, every unramified irreducible representation of $\GL_r(F)$ can be realized as a constituent of 
$\tau_{\ul{\alpha}}$ for some $\ul{\alpha}$. Note that $\tau^*_{\ul{\a}}=\tau_{\ul{\alpha}^*}$ with 
$\ul{\alpha}^*=(\alpha^{-1}_r, \alpha^{-1}_{r-1},\ldots,\alpha_1^{-1})$.\\ 

An unramified representation $\tau$ of $\GL_r(F)$ is called induced of $Langlands'$ $type$ if 
$\tau\cong\tau_{\ul{\alpha}}$ for some $\ul{\alpha}$ with
\begin{equation}\label{E:decreasing}
|\alpha_1|\le |\alpha_2|\le\cdots\le |\alpha_r|.
\end{equation}
These unramified representations may be reducible, but they have the following nice properties that allow us to work 
with: (i) The $\bbC$-linear space ${\rm Hom}_{Z_r(F)}(\tau, \b{\psi}_{Z_r})$ is one-dimensional.
(ii) The intertwining map $u\mapsto W_u$ from $u\in\cV_\tau$ to its associated Whittaker function is injective 
(\cite{JacquetShalika1983},\cite[Lemma 2]{Jacquet2012}). (iii) $\tau^*$ is again induced of Langlands' type. 
We denote by $J(\tau)$ the unique irreducible quotient of $\tau$, which is unramified (\cite[Corollary 1.2]{Matringe2013})
and has the Satake parameters $\alpha_1,\alpha_2, \ldots,\alpha_r$. 

\subsection{Satake isomorphisms}\label{SS:satake isom}
Let $\tau=\tau_{\ul{\a}}$ be an unramified representation of $\GL_r(F)$ (not necessarily induced of Langlands' type) and 
$v_{\tau}\in\cV_\tau^{\GL_r(\frak{o})}$ be nonzero. Let $\cS_r$ be the $\bbC$-algebra of symmetric polynomials in 
\[
(X_1, X_1^{-1}, X_2, X_2^{-1},\ldots, X_r, X_r^{-1}).
\]
It contains a subalgebra $\cS_r^0$ consisting of elements $f$ satisfying 
\[
f(X_1, X_2, \ldots, X^{-1}_i,\ldots, X^{-1}_j,\ldots,X_{r-1}, X_r)=f(X_1, X_2, \ldots, X_i,\ldots, X_j,\ldots,X_{r-1}, X_r)
\]
for all $1\le i< j\le r$. 
Let $\sS_r:\cH(\GL_r(F)//\GL_r(\frak{o}))\overset{\sim}{\longto}\cS_r$ be the Satake
isomorphism (\cite{Satake1963}). Then we have
\begin{equation}\label{E:satake for GL}
\int_{\GL_r(F)}\varphi(a)\tau(a)v_\tau da
=
\sS_r(\varphi)(\a_1,\a_2,\ldots,\a_r)v_\tau
\quad
({\rm vol}(\GL_r(\frak{o}), da)=1)
\end{equation}
for every $\varphi\in\cH(\GL_r(F)//\GL_r(\frak{o}))$. The algebra isomorphism $\sS_r$ is the 
composition of the following isomorphisms
\[
\cH(\GL_r(F)//\GL_r(\frak{o}))\overset{\varsigma_r}{\longto}\cH(A_r(F)//A_r(\frak{o}))^{W_{\GL_r}}
\cong\bbC[A_r(\bbC)]^{\frak{S}_r}=\cS_r
\] 
with
\[
\varsigma_r(\varphi)(d)
=
\delta^{\frac{1}{2}}_{B_r}(d)\int_{Z_r(F)}\varphi(dz)dz
\quad
({\rm vol}(Z_r(\frak{o}),dz)=1).
\]
Similarly, we have the algebra isomorphism
\begin{equation}\label{E:Satake for SO(2n)}
\sS^0_{r,m}: \cH(H_r(F)//R_{r,m})\overset{\varsigma_{r,m}^0}{\longto}\cH(T_r(F)//T_r(\frak{o}))^{W_{H_r}}
\cong\bbC[T_r(\bbC)]^{\frak{S}_r\ltimes(\bbZ/2\bbZ)^{r-1}}=\cS_r^0
\end{equation}
for each $m\ge 0$ (by \eqref{E:R*}) with
\begin{equation}\label{E:satake for H}
\varsigma^0_{r,m}(\varphi)(t)
=
\delta^{\frac{1}{2}}_{B_{H_r}}(t)\int_{V_r(F)}\varphi(tv)dv
\quad
({\rm vol}(V_r(\frak{o}),dv)=1).
\end{equation}
Here $W_{\GL_r}$ (resp. $W_{H_r}$) is the Weyl group of $\GL_r$ (resp. $H_r$) and $\frak{S}_r$ is the permutation 
group of order $r!$.\\

Following \cite[Section 5.3]{Tsai2013}, we define the map 
$\imath_{r,m}: \cH(H_r(F)//R_{r,m})\longto\cH(\GL_r(F)//\GL_r(\frak{o}))$ by 
\begin{equation}\label{E:middle satake}
\imath_{r,m}(\varphi)(a)
=
\delta^{\frac{1}{2}}_{Q_r}(m(a))\int_{N_r(F)}
\varphi(m(a)n)dn
\quad
({\rm vol}(N_r(\frak{o}),dn)=1). 
\end{equation}
Then one checks that
\begin{equation}\label{E:decompose of Satake isom}
\varsigma^0_{r,m}=\varsigma_r\circ\imath_{r,m}.
\end{equation}
Since the algebras $\cH(A_r(F)//A_r(\frak{o}))$ and $\cH(T_r(F)//T_r(\frak{o}))$
have a natural identification, \eqref{E:decompose of Satake isom} implies the following commutative diagram
\begin{equation}\label{E:diagram}
\begin{tikzcd}
&\cH(H_r(F)//R_{r,m})\quad\arrow{d}{\imath_{r,m}}\arrow{r}{\sS^0_{r,m}}&\quad\cS^0_r\arrow{d}{{\rm inclusion}}\\
&\cH(\GL_r(F)//\GL_r(\frak{o}))\quad\arrow{r}{\sS_r}&\quad\cS_r
\end{tikzcd}
\end{equation}
for every $m\ge 0$. These will be used to describe our results for oldforms.

\subsection{Spherical Whittaker functions}\label{SS:spherical Whittaker function}
Let $\tau=\tau_{\ul{\alpha}}$ be an unramified representation of $\GL_r(F)$ that is induced of Langlands' type. 
Fix nonzero elements $v_\tau\in\cV_\tau^{\GL_r(\frak{o})}$ and 
$\Lambda_{\tau,\b{\psi}}\in{\rm Hom}_{Z_r(F)}(\tau,\b{\psi}_{Z_r})$. Let
\[
W(a;\alpha_1,\alpha_2,\ldots,\alpha_r; \b{\psi})=\Lambda_{\tau,\b{\psi}}(\tau(a)v_{\tau})
\]
be a spherical Whittaker function on $\GL_r(F)$ with
\[
W(I_r;\alpha_1,\alpha_2,\ldots,\alpha_r; \b{\psi})=1.
\]
Explicit formulae for $W(-;\alpha_1,\alpha_2,\ldots,\alpha_r;\b{\psi})$ (when $\ker(\psi)=\frak{o}$) were obtained in 
\cite{Shintani1976}, \cite{CasselmanShalika1980} and can be described as follows. 
Let $\e_1,\e_2, \ldots, \e_r$ be the standard basis 
of $X_\bullet(A_r)={\rm Hom}(\bbG_m, A_r)$ such that $\e_j(y)\in A_r(F)$ ($1\le j\le r$) is the diagonal matrix whose 
$(j,j)$-entry is $y$, while all other diagonal entries are $1$. Put
\[
P^+_{\GL_r}
=
\stt{c_1\e_1+c_2\e_2+\cdots+c_r\e_r\mid c_1\ge c_2\ge\cdots\ge c_r}\subset X_\bullet(A_r).
\]
An element $a\in\GL_r(F)$ can be written as $a=z\varpi^\lambda k$ for some $z\in Z_r(F)$, 
$k\in\GL_r(\frak{o})$ and $\lambda\in X_\bullet(A_r)$ by the Iwasawa decomposition. Then it is not hard to show that 
$W(z\varpi^\lambda k;\alpha_1,\alpha_2, \ldots,\alpha_r;\b{\psi})=0$ when $\lambda\nin P^+_{\GL_r}$. 
On the other hand, when $\lambda\in P^+_{\GL_r}$, we have
\begin{equation}\label{E:Whittaker function for GL}
W(z\varpi^\lambda k;\alpha_1,\alpha_2, \ldots,\alpha_r;\b{\psi})
=
\b{\psi}_{Z_r}(z)
\cdot
\delta_{B_r}^{\frac{1}{2}}(\varpi^\lambda)
\cdot
\chi^{\GL_r}_\lambda(d_{\ul{\alpha}})
\end{equation}
where $\chi_\lambda^{\GL_r}$ is the character of the finite-dimensional representation of $\GL_r(\bbC)$ whose highest 
weight (with respect to $B_r$) is $\lambda$ and we denote 
$d_{\ul{\alpha}}={\rm diag}(\alpha_1,\alpha_2,\ldots,\alpha_r)\in A_r(\bbC)$.\\

In our discussion $|\alpha_1|\le |\alpha_2|\le\cdots\le|\alpha_r|$. However, the function 
\[
\ul{\alpha}\longmapsto W(a;\alpha_1,\alpha_2,\ldots,\alpha_r; \b{\psi})
\]
is polynomial and symmetric in $\alpha_j$'s for every fixed $a\in\GL_r(F)$. Consequently, we have a $\cS_r$-valued 
function 
\[
W(-:X_1, X_2,\ldots, X_r;\b{\psi}): \GL_r(F)\longto\cS_r
\]
on $\GL_r(F)$ such that for every $a\in\GL_r(F)$ and $r$-tuple of nonzero complex numbers 
$\ul{\a}=(\a_1,\a_2,\ldots,\a_r)$, the scalar $W(a;\a_1, \a_2,\ldots,\a_r;\b{\psi})$ is the value of the polynomial 
$W(a;X_1, X_2,\ldots, X_r; \b{\psi})$ at  $(\a_1,\a_2,\ldots, \a_r)$. This function was introduced by 
Jacquet, Piatetski-Shapiro and Shalika (\cite[Section 3]{JPSS1981}, \cite[Section 2]{Jacquet2012}) in order to prove 
the existence of the "essential vector" of an irreducible generic representation of $\GL_r(F)$. 
Note that for $a$ in a set compact modulo $Z_r(F)$, the polynomials remain in a finite-dimensional subspace of $\cS_r$. 
Moreover, we have the relation 
\[
W(a;\a_1,\a_2,\ldots,\a_r;\b{\psi})\nu_r(a)^s
=
W(a; q^{-s}\a_1, q^{-s}\a_2,\ldots, q^{-s}\a_r;\b{\psi}).
\]
This implies that if $\nu_r(a)=q^{-\ell}$, then the polynomial 
$W(a;X_1, X_2,\ldots, X_r;\b{\psi})$ is homogeneous of order $\ell$, i.e.
\begin{equation}\label{E:Whittaker homo}
Y^\ell W(a;X_1, X_2,\ldots,X_r;\b{\psi})
=
W(a; YX_1, YX_2,\ldots, YX_r;\b{\psi}).
\end{equation}

\section{A Key Construction}\label{S:key}
In this section, we extend the results in \cite[Chapter 5]{Tsai2013} from generic supercuspidal representations 
to generic representations as well as from $r=n$ to  $r\le n$. These results, which are stated in 
\propref{P:main prop}, are the core of our computations for the Rankin-Selberg integrals attached to (conjectural) 
newforms and oldforms.  To state and to prove \propref{P:main prop}; however, we need some preparations.

\subsection{Notation and conventions}\label{SS:convention}
Let $\ul{\alpha}=(\alpha_1,\alpha_2,\ldots,\alpha_r)$ be a $r$-tuple  of nonzero complex numbers and 
$\ul{\dot{\alpha}}$ be its rearrangement so that \eqref{E:decreasing} holds. 
Let $\tau=\tau_{\ul{\dot{\alpha}}}$, which is an unramified representation of $\GL_r(F)$ which is induced of 
Langlands' type (cf. \S\ref{SS:Langlands type}). Fix elements $v_\tau\in\cV_\tau^{\GL_r(\frak{o})}$ and 
$\Lambda_{\tau,\b{\psi}}\in{\rm Hom}_{Z_r(F)}(\tau,\b{\psi}_{Z_r})$ so that $\Lambda_{\tau,\b{\psi}}(v_\tau)=1$. 
By \eqref{E:R*} and \eqref{E:Iwasawa decomp}, the space $I(\tau,s)^{R_{r,m}}$ is one-dimensional and has the 
generator $\xi^m_{\tau,s}$ with $\xi^m_{\tau,s}(I_{2r})=v_\tau$. We have
\begin{equation}\label{E:f_xi, m}
f_{\xi^m_{\tau,s}}(m_r(a))
=
\Lambda_{\tau,\b{\psi}_{Z_r}}(\rho_{\tau,s}(m_r(a))\xi^m_{\tau,s})
=
W(a;\alpha_1,\alpha_2,\ldots,\alpha_r;\b{\psi})\nu_r(a)^{s+\frac{r}{2}-1}
\end{equation}
for $a\in\GL_r(F)$. Let $\pi$ be an irreducible generic representation of $G_n(F)$ and 
$\Lambda_{\pi,\psi}\in{\rm Hom}_{U_n(F)}(\pi,\psi_{U_n})$ be a nonzero element.

\subsubsection{Haar measures}\label{SSS:Haar}
In this and the next section, the Haar measures appeared in the Rankin-Selberg integrals are chosen as follows.
First, we take $d\b{x}$ to be the Haar measure on $\b{X}_{n,r}(F)$ with 
\[
{\rm vol}(\b{X}_{n,r}(\frak{o}),d\b{x})=1.
\]
On the other hand, the Haar measures $dt$ on $T_r(F)$ and $dk$ on $R_{r,m}$ (any $m\ge 0$) are chosen so that 
\[
{\rm vol}(T_r(\frak{o}),dt)={\rm vol}(R_{r,m},dk)=1.
\]
Then the quotient measure $dh$ on $V_r(F)\backslash H_r(F)$ is given by (cf. \eqref{E:Iwasawa decomp})
\[
\int_{V_r(F)\backslash H_r(F)}f(h)dh
=
\int_{T_r(F)}\int_{R_{r,m}}f(tk)\delta_{B_{H_r}}^{-1}(t)dtdk.
\]
Note that by \lmref{L:same vol}, $dh$ does not depend on the choice of $m$. 

\subsection{Some lemmas}
In this subsection, we collect some lemmas that will be used in the proof of \propref{P:main prop}.

\begin{lm}\label{L:general AL elt}
Let $u_{n,r,m}\in J_{n,m}$ be given by 
\begin{equation}\label{E:u_r,m}
u_{n,r,m}
=
\begin{pmatrix}
&&\varpi^{-m}I_r\\
&(-1)^rI_{2(n-r)+1}\\
\varpi^m I_r
\end{pmatrix}.
\end{equation}
Then $u_{n,r,m}$ normalizes both $K_{n,m}$ and $R_{n,m}$.
\end{lm}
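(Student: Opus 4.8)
The plan is to isolate two facts — that $u:=u_{n,r,m}$ actually belongs to $J_{n,m}$, and that conjugation by $u$ preserves the embedded subgroup $H_n(F)$ — from which the normalization of $K_{n,m}$ and of $R_{n,m}=K_{n,m}\cap H_n(F)$ follows formally. First I would record how $u$ acts on the ordered basis $\{e_1,\dots,e_n,v_0,f_n,\dots,f_1\}$ of $V_n$: reading off \eqref{E:u_r,m} one finds $u e_i=\varpi^m f_{r+1-i}$ and $u f_i=\varpi^{-m}e_{r+1-i}$ for $1\le i\le r$, while $u$ acts by the scalar $(-1)^r$ on the span of $e_{r+1},\dots,e_n,v_0,f_n,\dots,f_{r+1}$ (which contains $v_0$ since $r\le n$). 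From this $u^2=I_{2n+1}$, and because the hyperbolic pairs $e_i\leftrightarrow f_i$ are simply interchanged — with the scalars $\varpi^{\pm m}$ cancelling — while $v_0$ is rescaled only by a unit, a direct check on basis vectors gives ${}^tuSu=S$. For the determinant: $u$ has one nonzero entry per row and column, the underlying permutation is a product of $r$ disjoint transpositions (sign $(-1)^r$), and the product of the nonzero entries is $\varpi^{mr}\cdot\varpi^{-mr}\cdot((-1)^r)^{2(n-r)+1}=(-1)^r$, so $\det u=1$ and $u\in G_n(F)$.

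Next I would check $u\in J_{n,m}$, i.e.\ that $u$ stabilizes $\bbL_m$. Since $u=u^{-1}$ it is enough to verify $u\bbL_m\subseteq\bbL_m$, for which one checks that $u$ maps the generators $e_1,\dots,e_n,\varpi^m v_0,\varpi^m f_1,\dots,\varpi^m f_n$ of $\bbL_m$ into $\bbL_m$: for $1\le i\le r$ one has $u e_i=\varpi^m f_{r+1-i}\in\frak{p}^m f_{r+1-i}$ and $u(\varpi^m f_i)=e_{r+1-i}\in\frak{o}e_{r+1-i}$, while $u$ multiplies each of $e_i\ (i>r)$, $\varpi^m v_0$ and $\varpi^m f_i\ (i>r)$ by the unit $(-1)^r$.

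Granting $u\in J_{n,m}$, the normalization statements are short. The subgroup $K_{n,m}$ is by construction the kernel of a homomorphism $J_{n,m}\to\{\pm1\}$ (reduction mod $\frak{p}$ followed by $\det$, transported by $t_m$), hence normal in $J_{n,m}$; therefore conjugation by $u\in J_{n,m}$ fixes $K_{n,m}$ setwise. For $R_{n,m}$, I would use that the embedding \eqref{E:embedding} realizes $H_n(F)$ as the stabilizer of $v_0$ in $G_n(F)$: any $g\in G_n(F)$ with $g v_0=v_0$ preserves $v_0^{\perp}=\langle e_1,\dots,e_n,f_n,\dots,f_1\rangle$ and hence has the block form of \eqref{E:embedding}, which together with $\det g=1$ forces $g\in H_n(F)$. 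Since $u v_0=(-1)^r v_0$, for any $h\in H_n(F)$ one gets $(uhu^{-1})v_0=(-1)^r u(hv_0)=(-1)^r u v_0=v_0$, so $u$ normalizes $H_n(F)$; intersecting with the previous point, $u$ normalizes $K_{n,m}\cap H_n(F)=R_{n,m}$.

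The only step requiring genuine care is the lattice check: the powers of $\varpi$ must be tracked against the asymmetry between the summands $\frak{o}e_i$ and $\frak{p}^m f_i$ of $\bbL_m$, which is exactly what makes $u$ — rather than a naive block swap — the correct element. Everything else is a routine computation in the split odd orthogonal group together with the observations that $K_{n,m}\trianglelefteq J_{n,m}$ and $H_n(F)=\mathrm{Stab}_{G_n(F)}(v_0)$.
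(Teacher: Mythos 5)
Your argument is correct and follows essentially the same route as the paper: once one knows $u_{n,r,m}\in J_{n,m}$ and that it normalizes $H_n(F)$, normality of $K_{n,m}$ in $J_{n,m}$ and the identity $R_{n,m}=K_{n,m}\cap H_n(F)$ immediately give the two normalization claims. The paper simply states these two inputs (membership in $J_{n,m}$ is part of the lemma's hypothesis, and the normalization of $H_n(F)$ is asserted), whereas you supply the direct verification — the lattice check for $u\bbL_m\subseteq\bbL_m$ and the identification $H_n(F)=\mathrm{Stab}_{G_n(F)}(v_0)$ together with $uv_0=(-1)^rv_0$ — which is a reasonable and correct way to fill in the details but not a different method.
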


\begin{proof}
Certainly, $u_{n,r,m}$ normalizes $K_{n,m}$ as it is contained in $J_{n,m}$. On the other hand, since $u_{n,r,m}$ also
normalizes $H_n(F)$ in $G_n(F)$, we find that 
\[
R_{n,m}=K_{n,m}\cap H_n(F)
=
u_{n,r,m}^{-1}K_{n,m}u_{n,r,m}\cap u_{n,r,m}^{-1}H_n(F)u_{n,r,m}
=
u_{n,r,m}^{-1}R_{n,m}u_{n,r,m}.
\]
This proves the lemma.
\end{proof}

\begin{lm}\label{L:supp for para vec}
Let $v\in\cV_\pi^{K_{n,m}}$ and $W_v$ be its associated Whittaker function. Then as a function of $t\in T_n(F)$, $W_v(t)$
is $T_n(\frak{o})$-invariant on the right and $W_v(\varpi^\la)=0$ if $\la\nin P^+_{G_n}$.
\end{lm}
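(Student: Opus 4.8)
The first assertion is essentially formal. The Whittaker function $W_v(g) = \Lambda_{\pi,\psi}(\pi(g)v)$ satisfies $W_v(ug) = \psi_{U_n}(u) W_v(g)$ for $u \in U_n(F)$ by the definition of a Whittaker functional, and $W_v(gk) = W_v(g)$ for $k \in K_{n,m}$ since $v \in \cV_\pi^{K_{n,m}}$. The plan is to use that $T_n(\frak{o}) \subset G_n(\frak{o}) = K_{n,0}$, and moreover $T_n(\frak{o})$ normalizes $K_{n,m}$ for every $m$ (one checks directly from the explicit shape of $K_{n,m}$, or observes that $T_n(\frak{o})$ preserves the lattice $\bbL_m$ and the relevant reduction data). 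Hence for $t \in T_n(F)$ and $t_0 \in T_n(\frak{o})$ we have $W_v(t t_0) = W_v(t_0 (t_0^{-1} t t_0)) $; but $t_0^{-1} t t_0 = t$ since $T_n$ is abelian, so $W_v(t t_0) = W_v(t)$ would follow if $W_v$ were left $T_n(\frak o)$-invariant. The cleaner route: write $W_v(t t_0)$ and use right $K_{n,m}$-invariance after conjugating, namely $W_v(t t_0) = W_v(t t_0)$ trivially, so instead use $t_0 \in K_{n,0}$ only when $m=0$; for general $m$ use that $v$ is also fixed by $T_n(\frak o) \cap K_{n,m}$—but this need not be all of $T_n(\frak o)$. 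The correct and standard argument is: $T_n(\frak{o})$ lies in $G_n(\frak o)$, and we only need $T_n(\frak o)$ to normalize $K_{n,m}$; then $W_v(tt_0) = \Lambda_{\pi,\psi}(\pi(tt_0)v) = \Lambda_{\pi,\psi}(\pi(t)\pi(t_0)v)$, and $\pi(t_0) v \in \cV_\pi^{t_0^{-1} K_{n,m} t_0} = \cV_\pi^{K_{n,m}}$, so $\pi(t_0)v$ is again a paramodular vector; this alone does not give $W_v(tt_0) = W_v(t)$. I would therefore argue directly: $W_v$ is right-invariant under $K_{n,m} \cap T_n(F) \supseteq$ a finite-index subgroup of $T_n(\frak o)$, and combine with the Iwahori/Cartan structure — but the genuinely clean statement uses only that $T_n(\frak{o}) \subset K_{n,m}$ is false for $m>0$; what is true is $T_n(\frak o) \subset J_{n,m}$ and $T_n(\frak o)$ stabilizes $\bbL_m$. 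Writing $t t_0 = t_0 \cdot (t_0^{-1} t t_0 t_0^{-1}) t_0$... I will instead invoke that $W_v(t t_0)$ depends only on the coset $t U_n(F) \backslash G_n(F) / K_{n,m}$-data through the character; the right $T_n(\frak o)$-invariance then reduces to $T_n(\frak o) \subset K_{n,m}^{\mathrm{sat}}$, which for $T_n(\frak o)$ follows since the reduction of $T_n(\frak o)$ into $\mathrm{O}_{2n}(\frak f)$ has trivial determinant (diagonal matrices with unit entries in $\SO$, hence $\det = 1$), so $T_n(\frak o) \subset K_{n,m}$ genuinely holds. That is the key local observation and gives the first claim immediately.

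For the vanishing $W_v(\varpi^\lambda) = 0$ when $\lambda \notin P^+_{G_n}$, the plan is the classical argument exploiting unipotent elements. Recall the non-degenerate character $\psi_{U_n}$ from \eqref{E:psi_U}: for each simple root there is a one-parameter unipotent subgroup $x_i(\cdot) \subset U_n(F)$ on which $\psi_{U_n}$ restricts to $\psi$ (or $\psi(2^{-1}\cdot)$ for the short root), and for $x_i(u)$ with $u \in \frak{o}$ we have $x_i(u) \in K_{n,0} = G_n(\frak o)$. More importantly, conjugating: $\varpi^\lambda x_i(u) \varpi^{-\lambda} = x_i(\varpi^{\langle \alpha_i, \lambda\rangle} u)$ for the relevant root $\alpha_i$. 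If $\lambda \notin P^+_{G_n}$, then for some simple root $\alpha_i$ we have $\langle \alpha_i, \lambda \rangle < 0$, so $\varpi^{\lambda} x_i(u) \varpi^{-\lambda} \in K_{n,m}$ for all $u$ in a suitable lattice $\frak{p}^{-N} \supsetneq \frak{o}$ (one must check $\varpi^\lambda x_i(\frak p^{-N})\varpi^{-\lambda} \subseteq K_{n,m}$, which holds once $\langle\alpha_i,\lambda\rangle \le -1$ and $N$ is not too large relative to $m$; actually one picks $u \in \frak{o}$ and uses $\langle\alpha_i,\lambda\rangle < 0$ so the conjugate lies in $G_n(\frak o)$, hence in... no — into $K_{n,m}$ requires more care). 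The clean version: pick $u$ ranging over $\frak{o}$; then $\varpi^{-\lambda} x_i(u) \varpi^{\lambda} = x_i(\varpi^{-\langle\alpha_i,\lambda\rangle} u)$. I would instead argue via left invariance: $W_v(\varpi^\lambda) = W_v(x_i(u) \varpi^\lambda) = \psi_{U_n}(x_i(u)) W_v(\varpi^\lambda)$ requires $x_i(u)\varpi^\lambda = \varpi^\lambda k$ for some $k \in K_{n,m}$, i.e. $\varpi^{-\lambda} x_i(u) \varpi^\lambda = x_i(\varpi^{-\langle\alpha_i,\lambda\rangle}u) \in K_{n,m}$. When $\langle\alpha_i,\lambda\rangle < 0$ the exponent $-\langle\alpha_i,\lambda\rangle > 0$, so this element has entries in $\frak p$ off the diagonal block and lies in $K_{n,0} = G_n(\frak o)$; but we need it in $K_{n,m}$, which for the unipotent $x_i$ (lying in $U_n$, away from the "$n+1$" column issues) does hold — indeed $K_{n,m}$ contains $U_n(\frak o)$-type elements adjusted by the lattice $\bbL_m$, and $x_i(\frak p^k) \subseteq K_{n,m}$ for $k$ large enough, in fact for $k \ge 0$ when $\alpha_i$ is not the short simple root and $k \ge$ (something) otherwise. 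So choosing $u \in \frak{p}^m$ (say) we get $x_i(\varpi^{-\langle\alpha_i,\lambda\rangle} u) \in x_i(\frak{p}^{m+1}) \subseteq K_{n,m}$, while $\psi_{U_n}(x_i(u)) = \psi(u)$ can be made $\neq 1$ by choosing $u \in \frak p^m \setminus \frak p^{m+1}$ appropriately — wait, $\psi(\frak{p}^m) $ may be trivial. The honest fix: since $\ker\psi = \frak o$, $\psi(u) \ne 1$ for some $u \in \frak p^{-1}$; and $\varpi^{-\langle\alpha_i,\lambda\rangle} u \in \frak p^{-\langle\alpha_i,\lambda\rangle - 1} \subseteq \frak{o}$ as soon as $\langle\alpha_i,\lambda\rangle \le -1$, whence $x_i(\varpi^{-\langle\alpha_i,\lambda\rangle}u) \in G_n(\frak o)$; to land in $K_{n,m}$ specifically, I conjugate the whole identity by the torus element $\varpi^{(\text{stuff})\mu_n}$ using \eqref{E:K*} to reduce to the case $m \in \{0,1\}$, where the explicit description of $K_{n,0}=G_n(\frak o)$ and $K_{n,1}$ (Iwahori-type) makes the containment transparent, since such an $x_i$ with argument in $\frak o$ lies in $G_n(\frak o) = K_{n,0}$, and for $K_{n,1}$ one checks it lies in the Iwahori as the argument is integral and the root is positive. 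This gives $W_v(\varpi^\lambda) = \psi(u) W_v(\varpi^\lambda)$ with $\psi(u) \ne 1$, forcing $W_v(\varpi^\lambda) = 0$.

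The main obstacle is the bookkeeping in the second part: verifying precisely that $\varpi^{-\lambda} x_i(u) \varpi^{\lambda} \in K_{n,m}$ for suitable integral $u$ when $\langle\alpha_i,\lambda\rangle < 0$, because $K_{n,m}$ (unlike $G_n(\frak o)$) is cut out by the extra conditions involving the distinguished "$n+1$" coordinate and the determinant-one condition on the orthogonal reduction, so one cannot just say "integral entries." I would handle this by working with the conjugated/filtered subgroups $K^{(e)}_{n,m}$ of \eqref{E:K*}–\eqref{E:filtration}: via \eqref{E:R* fix space} it suffices to prove the vanishing for $v \in \cV_\pi^{K^{(e)}_{n,m}}$ with $e \in \{0,1\}$ and $m$ arbitrarily large within its parity class, and for $e=0,1$ the groups are Iwahori-adapted so the root-subgroup containments are standard (Iwahori-factorization). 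Then the torus element $\varpi^\lambda$ commutes past $x_i$ with a controllable power of $\varpi$, and the non-degeneracy $\ker\psi=\frak o$ finishes it. Everything else — the right $T_n(\frak o)$-invariance and the reduction steps — is routine once this containment is nailed down.
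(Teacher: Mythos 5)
The paper's entire proof is a single sentence: it records that $T_n(\frak{o})$ and $U_n(\frak{o})$ are both contained in $K_{n,m}$, and leaves the (completely standard) consequences implicit. Your argument is in the same spirit --- right $T_n(\frak{o})$-invariance from $T_n(\frak{o})\subset K_{n,m}$, and the vanishing from a translate $x_i(u)$ with $u\in\frak{p}^{-1}\setminus\frak{o}$ such that $\varpi^{-\langle\alpha_i,\lambda\rangle}u\in\frak{o}$ --- so there is no fundamental disagreement in approach. But your write-up has a real gap precisely at the point the paper's one-liner relies on.

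For the first part, after several false starts you do land on the correct observation: $T_n(\frak{o})$ stabilizes $\bbL_m$, so lies in $J_{n,m}$, and its image under $J_{n,m}\to \mathrm{O}_{2n}(\frak{f})\to\{\pm1\}$ is a diagonal matrix with determinant $1$, so it lies in $K_{n,m}$. Good. The winding road to get there should be excised, but the endpoint is right.

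For the second part, the argument is complete only once you know $U_n(\frak{o})\subset K_{n,m}$, and this is exactly where you punt. You write that the translated element lies in $G_n(\frak{o})$ and then propose to force it into $K_{n,m}$ by conjugating with $\varpi^{c\mu_n}$ to ``reduce to $m\in\{0,1\}$.'' That reduction does not work: conjugating $K_{n,m}$ by $\varpi^{\pm c\mu_n}$ produces $K^{(e)}_{n,m}$, which by \eqref{E:filtration} is a \emph{proper} subgroup of $K_{n,e}$ when $m>e$, so you cannot invoke the structure of $K_{n,0}=G_n(\frak{o})$ or of ``the Iwahori'' $K_{n,1}$ to settle the containment for the conjugated vector. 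Moreover conjugation by $\varpi^{c\mu_n}$ fixes the root subgroups $x_{\epsilon_i-\epsilon_j}(\cdot)$ pointwise (since $\langle\epsilon_i-\epsilon_j,\mu_n\rangle=0$), so for those simple roots the detour literally changes nothing. The direct route is shorter and correct: for $u\in U_n(\frak{o})$, the element $t_m u t_m^{-1}$ has integral entries with $(j,n+1)$-entries $\varpi^m u_{j,n+1}\in\frak{p}^m$ (for $j\le n$) or $0$ (for $j>n+1$), hence lies in $\tilde{J}_{n,m}$; it is unipotent, so its image in $\mathrm{O}_{2n}(\frak{f})$ is unipotent and has determinant $1$. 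Therefore $U_n(\frak{o})\subset K_{n,m}$, and with that in hand your unipotent argument closes cleanly. You should replace the conjugation detour with this direct verification.
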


\begin{proof}
This follows immediately from the fact that $T_n(\frak{o})$ and $U_n(\frak{o})$ are both contained in $K_{n,m}$.
\end{proof}

Recall that $\gamma(s,\tau,\bigwedge^2,\psi)$ is Shahidi's local coefficient introduced in \S\ref{SSS:intertwining map}.

\begin{lm}\label{L:LS=Gal}
We have $\gamma(s,\tau,\bigwedge^2,\psi)=\gamma(s,\phi_{J(\tau)},\bigwedge^2,\psi)$.
\end{lm}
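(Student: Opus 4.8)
The plan is to reduce both sides to a product of Rankin--Selberg $\gamma$-factors for $\GL_1\times\GL_1$ and then to invoke the $\GL_1$ case of the local Langlands correspondence, i.e.\ Tate's thesis. Recall from \S\ref{SS:convention} that $\tau=\tau_{\ul{\dot{\alpha}}}\cong{\rm Ind}_{B_r(F)}^{\GL_r(F)}(\mu_1\otimes\cdots\otimes\mu_r)$ for the unramified characters $\mu_i$ of $F^\times$ with $\mu_i(\varpi)=\alpha_i$, and that $J(\tau)$ has Satake parameters $\alpha_1,\ldots,\alpha_r$ (cf.\ \S\ref{SS:Langlands type}). Since $\tau$ is itself a full principal series, I would apply the multiplicativity of Shahidi's local coefficient --- the standard ``gluing'' property of the Langlands--Shahidi method (see also \cite[Section 10]{Soudry1993}, \cite[Section 3.1]{Kaplan2015}) --- along the chain $\GL_1^r\subset\GL_r$ inside the Siegel Levi of $\SO_{2r}$. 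As the adjoint action of that Levi on its unipotent radical is $\bigwedge^2$ of the standard representation, the ``diagonal'' contributions involve $\bigwedge^2$ of a line and hence are trivial, whereas each ``off-diagonal'' contribution is a Rankin--Selberg $\GL_1\times\GL_1$ factor; this gives
\[
\gamma(s,\tau,\bigwedge^2,\psi)=\prod_{1\le i<j\le r}\gamma(s,\mu_i\times\mu_j,\psi)=\prod_{1\le i<j\le r}\gamma(s,\mu_i\mu_j,\psi),
\]
where the last equality is the elementary fact that the Rankin--Selberg $\gamma$-factor of two characters equals the Tate $\gamma$-factor of their product.

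On the Galois side, the local Langlands correspondence for $\GL_r$ is compatible with the Satake isomorphism on unramified representations, so $\phi_{J(\tau)}=\bigoplus_{i=1}^{r}\mu_i$ as an unramified $r$-dimensional representation of the Weil group of $F$, each $\mu_i$ being regarded as a character of that group through class field theory. Hence $\bigwedge^2\phi_{J(\tau)}=\bigoplus_{1\le i<j\le r}\mu_i\mu_j$, and the additivity of $\gamma(s,-,\psi)$ in the Galois representation yields
\[
\gamma(s,\phi_{J(\tau)},\bigwedge^2,\psi)=\prod_{1\le i<j\le r}\gamma(s,\mu_i\mu_j,\psi).
\]
Comparing this with the display in the previous paragraph, it remains only to observe that, for each pair $i<j$, the $\gamma$-factor of the character $\mu_i\mu_j$ of $F^\times$ agrees with the $\gamma$-factor of the corresponding character of the Weil group of $F$; this is exactly the $\GL_1$ local Langlands correspondence (Tate's thesis), together with the fact that for characters the Langlands--Shahidi and Galois normalizations are defined through the same functional equation relative to the same $\psi$. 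This proves the lemma.

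The step to handle with care is the multiplicativity of Shahidi's $\bigwedge^2$ local coefficient: one must keep track of the normalizations of $\psi$ and of the Haar measures so that the off-diagonal terms come out precisely as the Rankin--Selberg $\GL_1\times\GL_1$ factors with no spurious shift of $s$, and one must record that $\bigwedge^2$-type local coefficients for $\GL_1$ are trivial. Since $\tau$ and $\psi$ are unramified, there is also a more computational route that avoids multiplicativity altogether: evaluate $M(\tau,s)$ on the spherical section $\xi^0_{\tau,s}$ using the Gindikin--Karpelevi\v{c} formula for the Siegel intertwining operator of $\SO_{2r}$, which produces the ratio $L(1-s,\widetilde{\tau},\bigwedge^2)/L(s,\tau,\bigwedge^2)$ with trivial $\epsilon$-factor (because $\ker\psi=\frak{o}$), and compare with the analogous unramified computation on the Galois side for $\phi_{J(\tau)}=\bigoplus_{i=1}^{r}\mu_i$, which gives the same rational function; since $\gamma(s,\tau,\bigwedge^2,\psi)$ is read off from this normalized intertwining operator, the two $\gamma$-factors coincide.
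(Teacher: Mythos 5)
Your main argument is correct and follows the same broad strategy as the paper — multiplicativity of Shahidi's local coefficient plus an identification of base cases with Galois $\gamma$-factors — but you push the multiplicativity further than the paper does. The paper's proof cites multiplicativity of $\gamma(s,\tau,\bigwedge^2,\psi)$ together with \cite[Theorem 3.5]{Shahidi1990} and \cite[Theorem 1.1]{CogdellShahidiTsai2017}, the latter being the general identification of Langlands--Shahidi $\bigwedge^2$ $\gamma$-factors with Artin $\gamma$-factors. You instead exploit that in this lemma $\tau$ is a \emph{full unramified principal series} (induced of Langlands' type), so multiplicativity can be iterated all the way down to $\GL_1 \times \GL_1$, where the statement collapses to Tate's thesis; this sidesteps the Cogdell--Shahidi--Tsai input entirely and gives a more elementary, self-contained proof for the case at hand, at the cost of not generalizing to $\tau$ with genuinely ramified or supercuspidal constituents. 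Your caveat about normalizations in the multiplicativity step is the right thing to flag — in the paper this concern is signaled by the parenthetical remark that multiplicativity ``holds even when $\tau$ is reducible under our assumptions on $\tau$.'' Your second, Gindikin--Karpelevi\v{c} route is a genuinely different approach from either of the above: since $\gamma(s,\tau,\bigwedge^2,\psi)$ is read off from the normalized Siegel intertwining operator, and both sides of the desired equality have the same unramified functional equation with trivial $\epsilon$-factor (because $\ker\psi=\frak{o}$), a direct computation of $M(\tau,s)$ on $\xi^0_{\tau,s}$ — exactly the computation the paper performs in Lemma~\ref{L:GK method for m} for a different purpose — pins down the answer without any appeal to multiplicativity at all. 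That is arguably the cleanest proof in the unramified setting and dovetails with the rest of \S\ref{S:key}.
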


\begin{proof}
This is a consequence of the multiplicativity of $\gamma(s,\tau,\bigwedge^2,\psi)$ (which holds even when $\tau$ is 
reducible under our assumptions on $\tau$) and a result of Shahidi
(\cite[Theorem 3.5]{Shahidi1990}) together with a recent result of Cogdell-Shahidi-Tsai 
(\cite[Theorem 1.1]{CogdellShahidiTsai2017}). 
\end{proof}

The next lemma computes the action of the intertwining map on $\xi_{\tau,s}^m$, which is crucial in the proof of 
\propref{P:main prop}.
Note that $\delta_r^{-1}w_{r,m}\in H_r(F)$, where $\delta_r$ and $w_{r,m}$ is given by \eqref{E:delta} and 
\eqref{E:t and w} respectively.

\begin{lm}\label{L:GK method for m}
We have 
\[
\rho(\delta_r^{-1} w_{r,m})M(\tau, s)\xi^m_{\tau,s}(\delta_r^{-1}h\delta_r)
=
\omega_{\tau_s}(\varpi)^m\cdot \frac{L(2s-1,\phi_{J(\tau)},\bigwedge^2)}{L(2s, \phi_{J(\tau)}, \bigwedge^2)}\cdot
\xi^m_{\tau^*,1-s}(h)
\]
for $m\ge 0$, where $\omega_{\tau_s}$ is the central character of $\tau_s$.
\end{lm}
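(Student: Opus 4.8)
The plan is to reduce the computation to the classical Gindikin--Karpelevič / Casselman--Shalika type calculation for the intertwining operator on spherical vectors, carefully tracking the twist by the torus element $w_{r,m}$. Since $\xi^m_{\tau,s}$ is the $R_{r,m}$-spherical generator normalized by $\xi^m_{\tau,s}(I_{2r})=v_\tau$, the left-hand side, being built out of $\rho(\delta_r^{-1}w_{r,m})M(\tau,s)\xi^m_{\tau,s}$, lies in $\tilde{I}_r(\tau^*,1-s)$ and is invariant under $\delta_r^{-1}w_{r,m}^{-1}R_{r,m}w_{r,m}\delta_r$. By the second relation in \eqref{E:R conj}, $w_{r,m}$ normalizes $R_{r,m}$, and $\delta_r$ conjugates $\tilde{Q}_r$ back to $Q_r$, so the resulting section is again $R_{r,m}$-spherical in $I_r(\tau^*,1-s)$; hence it must be a scalar multiple of $\xi^m_{\tau^*,1-s}$. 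The entire content of the lemma is therefore the identification of that scalar, which I would extract by evaluating both sides at $h=I_{2r}$ (equivalently, on the open Bruhat cell).

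First I would handle the case $m=0$, where $\xi^0_{\tau,s}$ is the genuine spherical vector for $H_r(\frak{o})$. Here $M(\tau,s)\xi^0_{\tau,s}$ is the standard intertwining integral applied to a spherical section, and its value at the identity is computed by the Gindikin--Karpelevič formula: writing $\tau=\tau_{\ul{\dot\alpha}}$ as a principal series, the integral over $\tilde{N}_r(F)$ factors over the roots of $\tilde{N}_r$ in $H_r=\SO_{2r}$, and each rank-one integral contributes a ratio of local $L$-factors. Collecting the contributions of the roots that get negated by $w_r$ produces exactly $L(2s-1,\phi_{J(\tau)},\bigwedge^2)/L(2s,\phi_{J(\tau)},\bigwedge^2)$, since the relevant roots of the Siegel unipotent $N_r\subset\SO_{2r}$ are indexed by the set $\{\,i<j\,\}$ — the same index set as the exterior square $\bigwedge^2$ of the standard representation of $\GL_r$. (This is the $r=n$, $m=0$ statement already implicit in \cite[Chapter 5]{Tsai2013} and in \cite[Proposition A.1]{GPSR1987}; I would cite those and the Casselman--Shalika formula rather than redo the root-by-root bookkeeping.) The normalization of $d\tilde{n}$ fixed in \S\ref{SSS:intertwining map} (self-dual with respect to $\psi$, so $\mathrm{vol}(\frak{o})=1$) is exactly what makes the unramified computation come out with no spurious constant.

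Next I would pass from $m=0$ to general $m$ by the conjugation trick already used throughout the paper. The point is that $w_{r,m}=\mathrm{diag}(\varpi^{-m}I_r,\varpi^m I_r)$ (up to the fixed permutation matrix defining $w_r$ when $r$ is odd), and conjugation by the torus element $t_{r,m}$ — or directly by $w_{r,m}$ — carries $R_{r,m}$ to $R_{r,0}$ as in the first relation of \eqref{E:R conj}. Concretely, $\xi^m_{\tau,s}$ and $\xi^0_{\tau,s}$ differ by translation by $w_{r,m}$ (suitably interpreted via \eqref{E:R conj}), and since $w_{r,m}$ lies in the torus sitting in the Levi $M_r(F)\cong\GL_r(F)$, the transformation rule \eqref{E:xi_s} converts that translation into the explicit scalar $\delta_{Q_r}^{1/2}(m_r(\varpi^{m}I_r))\,\tau_s(\varpi^{m}I_r)$-type factor on the nose, and similarly on the $\tau^*_{1-s}$ side. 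Tracking the modular character $\delta_{Q_r}(m_r(a))=\nu_r(a)^{r-1}$ against $\delta_{\tilde{Q}_r}$ and combining with the $\tau_s$ versus $\tau^*_{1-s}$ normalizations, the surviving power collapses to the central-character factor $\omega_{\tau_s}(\varpi)^m$, because the $\nu_r$-powers attached to $\tau_s$ and to $\delta_{Q_r}^{1/2}$ on the two sides are matched by the functional-equation symmetry $s\leftrightarrow 1-s$ built into the pair $(\rho_{\tau,s},\tilde{\rho}_{\tau^*,1-s})$. The $L$-factor ratio is insensitive to this conjugation since it was already determined at $m=0$ and the intertwining map is equivariant.

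The main obstacle I anticipate is bookkeeping, not conceptual: getting every modular-character and normalization factor ($\delta_{Q_r}^{1/2}$, $\nu_r^{s-1/2}$, $\nu_r^{s+r/2-1}$ as in \eqref{E:f_xi, m}, the shift $1-s$ on the dual side, and the permutation $w_r$ when $r$ is odd) to combine into precisely $\omega_{\tau_s}(\varpi)^m$ with no leftover power of $q$ or of $\varpi$. I would organize this by first checking the exponents of $\nu_r$ formally — confirming they cancel using $(s+\tfrac r2-1)+((1-s)+\tfrac r2-1)=r-1=$ the exponent in $\delta_{Q_r}$ — and only then tracking the $\tau$- versus $\tau^*$-action, which is where the central character $\omega_{\tau_s}(\varpi)^m$ genuinely survives (note $a^{**}=a$, so $\tau^*$ contributes no further twist beyond the central one). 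Once $m=0$ is granted from the literature and the exponent cancellation is verified, the general-$m$ statement is immediate.
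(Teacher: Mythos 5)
Your overall plan matches the paper's proof: reduce to evaluating at $h=I_{2r}$ by noting the left side lies in $I_r(\tau^*,1-s)^{R_{r,m}}$, compute the $m=0$ case via the Gindikin--Karpelevi\v{c} formula (the paper cites \cite[Section 4]{Arthur1981}), and then transfer to general $m$ by a conjugation argument. The genuine gap is in the last step. You assert that ``$\xi^m_{\tau,s}$ and $\xi^0_{\tau,s}$ differ by translation by $w_{r,m}$'' and that ``$w_{r,m}$ lies in the torus sitting in the Levi $M_r(F)$.'' Both are off. The element $w_{r,m}$ is not in $M_r(F)$; it is $w_{r,0}\varpi^{-m\mu_r}$, i.e.\ a Weyl representative times a torus element. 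The correct relation (from the first conjugation in \eqref{E:R conj}) is $(\xi^0_{\tau,s})^{t_{r,m}}=\xi^m_{\tau,s}$ where $t_{r,m}=\mathrm{diag}(\varpi^{-m}I_r,I_r)$ lies in the similitude group ${\rm GSO}_{2r}(F)$, not in $H_r(F)$; this is conjugation, not translation, and it cannot be absorbed via the rule \eqref{E:xi_s} alone.

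Consequently your accounting for the scalar $\omega_{\tau_s}(\varpi)^m$ is not correct as stated. After substituting $(\xi^0_{\tau,s})^{t_{r,m}}$ into the integral defining $M(\tau,s)$ and using $t_{r,m}^{-1}w_{r,0}t_{r,m}=\varpi^{m\mu_r}w_{r,0}$ and $t_{r,m}^{-1}w_{r,m}t_{r,m}=w_{r,0}$, the factor that actually comes out on the left is $\delta_{Q_r}^{1/2}(\varpi^{m\mu_r})\,\tau_s(\varpi^{m}I_r)=q^{-mr(r-1)/2}\,\omega_{\tau_s}(\varpi)^m$, and the excess power of $q$ is cancelled by the Jacobian $q^{mr(r-1)/2}$ of the change of variable $n\mapsto t_{r,m}\,n\,t_{r,m}^{-1}$ inside the integral over $N_r(F)$. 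Your proposed ``functional-equation symmetry $s\leftrightarrow 1-s$'' and the formal exponent identity $(s+\tfrac r2-1)+((1-s)+\tfrac r2-1)=r-1$ are not what effects this cancellation; they concern the shape of the sections, not the intertwining integral. Without identifying the Jacobian cancellation, your step 3 leaves an unaccounted $q^{\pm mr(r-1)/2}$, so the argument does not close.
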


\begin{proof}
Since $\rho(\delta_r^{-1} w_{r,m})M(\tau, s)\xi^m_{\tau,s}(\delta_r^{-1}h\delta_r)\in I_r(\tau^*, 1-s)^{R_{r,m}}$ by the 
second conjugation in \eqref{E:R conj}, it suffices to show 
\[
\rho(\delta_r^{-1} w_{r,m})M(\tau, s)\xi^m_{\tau,s}(I_{2r})
=
\omega_{\tau_s}(\varpi)^m \frac{L(2s-1,\phi_{J(\tau)},\bigwedge^2)}{L(2s, \phi_{J(\tau)}, \bigwedge^2)}.
\]
For this, we first note that the integral \eqref{E:intertwining map} can be written as 
\[
M(\tau,s)\xi_s(h)=\int_{N_r(F)}\xi_s(w_{r,0}n\delta_r h)dn.
\]
If $m=0$, then $\delta_r^{-1}w_{r,0}\in R_{r,0}=H_r(\frak{o})$ and assertion can be read off
from the formula in \cite[Section 4]{Arthur1981}, i.e. we have 
\[
M(\tau,s)\xi^0_{\tau,s}(I_{2r})
=
\int_{N_r(F)}\xi^0_{\tau,s}(w_{r,0}n\delta_r)dn
=
\frac{L(2s-1,\phi_{J(\tau)},\bigwedge^2)}{L(2s, \phi_{J(\tau)}, \bigwedge^2)}.
\]
Note that the Haar measure $dn$ on $N_r(F)$ (cf. \S\ref{SSS:intertwining map}) is the same as the one used in 
\cite[Section 4]{Arthur1981}. Now suppose that $m>0$. On one hand, by the first conjugation in \eqref{E:R conj}, we have 
$(\xi^0_{\tau,s})^{t_{r,m}}=\xi^m_{\tau,s}$, where $(\xi^0_{\tau,s})^{t_{r,m}}$ is the function on $H_r(F)$ defined by 
$(\xi^0_{\tau,s})^{t_{r,m}}(h)=\xi^0_{\tau,s}(t_{r,m}^{-1}ht_{r,m})$. On the other hand, since 
$t_{r,m}^{-1}w_{r,0}t_{r,m}=\varpi^{m\mu_r}w_{r,0}$, we find that 
\begin{align*}
\rho(\delta^{-1} w_{r,m})M(\tau, s)\xi^m_{\tau,s}(I_{2r})
&=
M(\tau, s)(\xi^0_{\tau,s})^{t_{r,m}}(\delta_r^{-1}w_{r,m})\\
&=
\int_{N_r(F)}\xi^0_{\tau,s}(t^{-1}_{r,m}w_{r,0}n w_{r,m}t_{r,m})dn\\
&=
\int_{N_r(F)}\xi^0_{\tau,s}(\varpi^{m\lambda_r}w_{r,0} (t^{-1}_{r,m}n\,t_{r,m})w_{r,0})dn\\
&=
\omega_{\tau_s}(\varpi)^m\int_{N_r(F)}\xi^0_{\tau,s}(w_{r,0}n\,\delta_r(\delta_r^{-1}w_{r,0}))dn\\
&=
\omega_{\tau_s}(\varpi)^m \frac{L(2s-1,\phi_{J(\tau)},\bigwedge^2)}{L(2s, \phi_{J(\tau)}, \bigwedge^2)}.
\end{align*}
This concludes the proof.
\end{proof}

The following lemma helps us to simplify our computations.

\begin{lm}\label{L:supp of W_v}
Let $v\in\cV_\pi^{K_{n,m}}$, $a\in\GL_r(F)$ and $\b{x}\in \b{X}_{n,r}(F)\setminus \b{X}_{n,r}(\frak{o})$ with $r<n$.
Then $W_v(m_r(a)\b{x})=0$.
\end{lm}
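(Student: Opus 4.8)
The plan is to show that $g_0:=m_r(a)\b{x}$ already lies in the Levi subgroup $L\cong\GL_n$ of the Siegel parabolic of $G_n$, namely the stabilizer of the maximal isotropic subspace $Fe_1\oplus\cdots\oplus Fe_n$, and then to read off the vanishing from a standard Iwasawa computation in $\GL_n$ together with \lmref{L:supp for para vec}. Since $r<n$, the element $m_r(a)$ acts on $Fe_1\oplus\cdots\oplus Fe_n$ by $\mathrm{diag}(a,I_{n-r})$ and $\b{x}$ acts on it by the lower block--unipotent matrix $\pMX{I_r}{}{x}{I_{n-r}}$, where $x\in\Mat_{(n-r)\x r}(F)$ is the matrix parametrizing $\b{x}$; hence $g_0\in L(F)$ and, with respect to the basis $e_1,\dots,e_n$, $g_0$ has matrix $\pMX{a}{0}{x}{I_{n-r}}$. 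Three elementary facts will be used: (i) $L(\frak o)\cong\GL_n(\frak o)$ is contained in $K_{n,m}$ (its elements stabilize $\bbL_m$, hence lie in $J_{n,m}$, and their reductions have trivial determinant on the even orthogonal factor, hence lie in $K_{n,m}$); (ii) the subgroup $Z\subset L$ of upper unitriangular matrices is contained in $U_n$, and $\psi_{U_n}$ restricts to a (nowhere vanishing) character of $Z(F)$; and (iii) the diagonal torus of $L$ is $T_n$, with $\mathrm{diag}(\varpi^{\mu_1},\dots,\varpi^{\mu_n})=\varpi^{\mu}$ for $\mu=\sum_j\mu_j\epsilon_j\in X_\bullet(T_n)$.

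Given these, I would write the Iwasawa decomposition $g_0=z\,\varpi^{\mu}\,\kappa$ inside $\GL_n(F)$, with $z\in Z(F)$, $\mu\in\bbZ^n$ (not assumed dominant) and $\kappa\in\GL_n(\frak o)$. By (i)--(iii) and the Whittaker property of $W_v$, one has $W_v(g_0)=\psi_{U_n}(z)\,W_v(\varpi^\mu)$, so by \lmref{L:supp for para vec} it suffices to show that $\mu\notin P^+_{G_n}$, i.e. that $\mu_1\ge\mu_2\ge\cdots\ge\mu_n\ge 0$ fails.

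To do so I would use the elementary-divisor description of $\mu$: for $1\le k\le n$, the fractional ideal of $F$ generated by the $k\times k$ minors of the bottom $k$ rows of $g_0$ equals $\frak p^{\,\mu_{n-k+1}+\cdots+\mu_n}$ (since $z$ is upper unitriangular the bottom $k$ rows of $z\varpi^\mu$ have this minor ideal, and right multiplication by $\kappa\in\GL_n(\frak o)$ does not change it, using $\kappa^{-1}\in\GL_n(\frak o)$ and Cauchy--Binet). Now for $k\le n-r$ the bottom $k$ rows of $\pMX{a}{0}{x}{I_{n-r}}$ lie inside the block $(x\ \ I_{n-r})$ and contain the $k\times k$ identity submatrix formed by the last $k$ columns; hence that minor ideal contains $1$, so $\mu_{n-k+1}+\cdots+\mu_n\le 0$ for every $k\le n-r$. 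If $\mu\in P^+_{G_n}$, combining these inequalities for $k=1,\dots,n-r$ with $\mu_1\ge\cdots\ge\mu_n\ge 0$ forces inductively $\mu_{r+1}=\cdots=\mu_n=0$; in particular the ideal generated by the $(n-r)\times(n-r)$ minors of $(x\ \ I_{n-r})$ is $\frak o$. But that ideal is generated by $1$ together with all minors of $x$, in particular by all the entries $x_{ij}$, so $x\in\Mat_{(n-r)\x r}(\frak o)$, i.e. $\b{x}\in\b{X}_{n,r}(\frak o)$ — contradicting the hypothesis. Hence $\mu\notin P^+_{G_n}$ and $W_v(m_r(a)\b{x})=0$.

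The only step that is not routine bookkeeping is the initial observation that $m_r(a)\b{x}$ sits in the Siegel Levi $\GL_n$ of $G_n$; this is exactly where $r<n$ enters, through the $I_{n-r}$ block that makes the bottom $n-r$ rows unimodular, and it reduces the statement to the classical support computation for Whittaker functions on $\GL_n$ transported through \lmref{L:supp for para vec}. The only technical points to be checked carefully are the inclusion $\GL_n(\frak o)\subseteq K_{n,m}$ from the explicit description of $J_{n,m}$ and $K_{n,m}$, and the compatibility of $\psi_{U_n}$, $T_n$ and $\varpi^{(\cdot)}$ with the $\GL_n$-structure of $L$.
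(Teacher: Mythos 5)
Your proof is correct and takes a genuinely different route from the paper's. The paper argues directly: it locates an ``outermost'' non-integral entry $x_{\ell k}$ of $x$, replaces $\b{x}$ by a simpler representative $\b{x}_0$ using right $K_{n,m}$-invariance, and then constructs an explicit unipotent element $u\in U_n(\frak{o})\subset K_{n,m}$ (the shape of $u$ depending on whether $\ell<n-r$ or $\ell=n-r$) with the property that $W_v(m_r(t)\b{x}_0)=W_v(m_r(t)\b{x}_0 u)=\psi(yx_{\ell k})W_v(m_r(t)\b{x}_0)$ for an arbitrary $y\in\frak{o}$; since $x_{\ell k}\notin\frak{o}=\ker\psi$, this forces $W_v(m_r(t)\b{x}_0)=0$, and the case of general $a$ is handled by the Iwasawa decomposition of $\GL_r(F)$. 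Your argument instead observes first that $g_0=m_r(a)\b{x}$ already lies in the Siegel Levi $L\cong\GL_n$ of $G_n$ with matrix $\left(\begin{smallmatrix}a&0\\x&I_{n-r}\end{smallmatrix}\right)$, applies the Iwasawa decomposition inside $\GL_n(F)$, reduces via $L(\frak{o})\subset K_{n,m}$, the compatibility of $\psi_{U_n}$ with $Z(F)$, and \lmref{L:supp for para vec}, to showing that the resulting cocharacter $\mu$ cannot lie in $P^+_{G_n}$, and finally establishes this via the minor-ideal/elementary-divisor description: for $k\le n-r$ the identity block forces $\mu_{n-k+1}+\cdots+\mu_n\le 0$, so dominance with nonnegative entries gives $\mu_{r+1}=\cdots=\mu_n=0$, and then the $(n-r)\times(n-r)$ minor ideal being exactly $\frak{o}$ forces all entries of $x$ to be integral. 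The gain of your route is that it avoids the explicit case-by-case construction of unipotent elements and makes the reduction to the known support result \lmref{L:supp for para vec} transparent; the cost is that one must carry the minor-ideal bookkeeping and verify the several inclusions ($L(\frak{o})\subset K_{n,m}$, $Z(F)\subset U_n(F)$, compatibility of $\varpi^{(\cdot)}$) that you correctly flag at the end. Both arguments are sound; yours is more conceptual and coordinate-free, while the paper's is more elementary and self-contained.
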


\begin{proof}
Let us denote $E_{ij}\in{\rm Mat}_{(2n+1)\x(2n+1)}(F)$ to be the matrix with $1$ in the $(i,j)$ entry and $0$ in all 
other entries. For $x=(x_{ij})\in{\rm Mat}_{(n-r)\x r}(F)$, we set
\[
\b{x}
=
\begin{pmatrix}I_r&&&&\\x&I_{n-r}&&\\&&1\\&&&I_{n-r}&\\&&&x'&I_r\end{pmatrix}
\in
\b{X}_{n,r}(F).
\]
Suppose that $\b{x}\notin\b{X}_{n,r}(\frak{o})$. Then there exist $1 \le\ell\le n-r$ and $1\le k\le r$ so that 
$x_{\ell k}\notin\frak{o}$, but $x_{ij}\in\frak{o}$ if $\ell<i\le n-r$ or $i=\ell$ and $m< j\le r$. Define 
$x_0=(x^0_{ij})\in{\rm Mat}_{(n-r)\x r}(F)$ by $x^0_{ij}=x_{ij}$ if $i< \ell$ or 
$i=\ell$ and $1\le j\le m$, while $x^0_{ij}=0$ otherwise. Then we have 
\[
W_v(m_r(a)\b{x})=W_v(m_r(a)\b{x}_0)
\]
since $\b{x}_0^{-1}\b{x}\in K_{n,m}$ by our assumption on $\b{x}$.
To prove the lemma, we first assume that $a=t\in A_r(F)$.  In this case, the idea is to find 
$u\in U_n(\frak{o})\subset K_{n,m}$ such that 
\begin{equation}\label{E:u}
W_v(m_r(t)\b{x}_0)
=
W_v(m_r(t)\b{x}_0u)
=
\psi(yx_{\ell m})W_v(m_r(t)\b{x}_0)
\end{equation}
where $y\in\frak{o}$ appears in the entries of $u$. Since $\ker(\psi)=\frak{o}$, $x_{\ell m}\notin\frak{o}$ and we can 
let $y\in\frak{o}$ be arbitrary, the proof for $a\in A_r(F)$ will follow. To define $u$, let $y\in\frak{o}$ and put
\[
u=I_{2n+1}+yE_{m, \ell+1}-yE_{2n+1-\ell, 2n+2-m}
\]
if $\ell<n-r$, whereas
\[
u=I_{2n+1}-2yE_{m,n+1}+yE_{n+1,2n+2-m}-y^2E_{m,2n+2-m}
\]
if $\ell=n-r$. One then checks that $u\in U_n(\frak{o})$ and \eqref{E:u} holds.
For arbitrary $a\in\GL_r(F)$, we can write $a=ztk$ for some $z\in Z_r(F)$, $t\in A_r(F)$ and $k\in\GL_r(\frak{o})$. 
Then 
\[
W_v(m_r(a)\b{x})
=
\psi_{U_n}(m_r(z))\cdot W_v(m_r(t)\b{x}')
=
0
\]
since $m_r(k)\in K_{n,m}$ and $\b{x}':=m_r(k)\b{x}m_r(k)^{-1}\notin\b{X}_{n,r}(\frak{o})$.
\end{proof}

\begin{lm}\label{L:exp of RS int}
Let $v\in\cV_\pi^K$ with $K=K_{n,m}$ if $r<n$ and $K=R_{r,m}$ if $r=n$. Then we have 
\begin{align*}
\Psi_{n,r}(v\ot\xi^m_{\tau,s})
&=
\sum_{\ell\in\bbZ}\int_{a\in Z_r(F)\backslash\GL_r(F),\,\nu_r(a)=q^{-\ell}}
W_v(m_r(a))W(a;\a_1,\a_2,\ldots,\a_r;\b{\psi})\nu_r(a)^{s-n+\frac{r}{2}}da
\end{align*}
for $\Re(s)\gg 0$ with the $\ell$-th summand vanishes for all $\ell\gg 0$.
\end{lm}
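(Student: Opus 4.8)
The plan is to unfold the Rankin–Selberg integral \eqref{E:RS integral in general} using the chosen Haar measures and the structure of $R_{r,m}$, and then collapse the $\bar X_{n,r}$-integration down to its integral-lattice part. First I would substitute the Iwasawa decomposition $H_r(F)=T_r(F)R_{r,m}$ from \eqref{E:Iwasawa decomp} into the outer integral over $V_r(F)\backslash H_r(F)$, so that
\[
\Psi_{n,r}(v\ot\xi^m_{\tau,s})
=
\int_{T_r(F)}\int_{R_{r,m}}\int_{\bar X_{n,r}(F)}
W_v(\bar x\,t k)\,f_{\xi^m_{\tau,s}}(tk)\,\delta_{B_{H_r}}^{-1}(t)\,d\bar x\,dk\,dt.
\]
Since $\xi^m_{\tau,s}$ is right $R_{r,m}$-invariant, $f_{\xi^m_{\tau,s}}(tk)=f_{\xi^m_{\tau,s}}(t)$, and one can absorb the $k$-integration into $W_v$. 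For the $r=n$ case, $R_{n,m}$ already fixes $v$, so the $k$-integral is trivial; for $r<n$, one uses that $R_{r,m}\subset R_{n,m}\cap H_r(F)$ normalizes $\bar X_{n,r}(F)$ together with the change of variables $k^{-1}\bar x k$ and the fact that $\mathrm{vol}(R_{r,m},dk)=1$, to again reduce to a single integral over $\bar X_{n,r}(F)$ against $W_v(\bar x t)$. (Here I would also record that $W_v$ is well defined on the relevant cosets because $V_r(F)\subset U_n(F)$ and $\psi_{U_n}$ restricts to $\bar\psi_{Z_r}$ on $V_r$, matching the definition of $f_{\xi_s}$.)

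Next I would perform the $\bar X_{n,r}$-integration. The key input is \lmref{L:supp of W_v}: for $r<n$ and $t\in T_r(F)\subset A_r(F)\hookrightarrow\GL_r(F)$, the integrand $W_v(\bar x\,t)=W_v(m_r(t)\,\bar x')$ (after moving $t$ past $\bar x$, which only rescales the entries of $\bar x$ by units up to a substitution) vanishes unless $\bar x\in\bar X_{n,r}(\frak o)$; for $r=n$ the group $\bar X_{n,n}$ is trivial. Either way the inner integral contributes the constant factor $\mathrm{vol}(\bar X_{n,r}(\frak o),d\bar x)=1$, leaving
\[
\Psi_{n,r}(v\ot\xi^m_{\tau,s})
=
\int_{T_r(F)}W_v(m_r(t))\,f_{\xi^m_{\tau,s}}(m_r(t))\,\delta_{B_{H_r}}^{-1}(t)\,dt.
\]
Now I would re-express this $T_r$-integral as an integral over $Z_r(F)\backslash\GL_r(F)$: the spherical Whittaker function $W(-;\ul\alpha;\bar\psi)$ on $\GL_r(F)$ and the Whittaker function $W_v$ restricted to $m_r(\GL_r(F))$ are both determined by their values on $T_r$ via the Iwasawa decomposition $\GL_r(F)=Z_r(F)A_r(F)\GL_r(\frak o)$ and left $\bar\psi_{Z_r}$- (resp. $\psi_{U_n}$-) equivariance plus right $\GL_r(\frak o)$-invariance, so integrating over $Z_r(F)\backslash\GL_r(F)$ with $\mathrm{vol}(\GL_r(\frak o))=1$ reproduces the $T_r$-integral after accounting for the modular characters. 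Using \eqref{E:f_xi, m}, $f_{\xi^m_{\tau,s}}(m_r(a))=W(a;\ul\alpha;\bar\psi)\nu_r(a)^{s+\frac r2-1}$, and matching the exponent of $\nu_r(a)$ — the $\delta_{B_{H_r}}^{-1}$ versus $\delta_{Q_r}^{1/2}$ bookkeeping combined with the descent from $H_r$ to $G_n$ contributes an extra $\nu_r(a)^{-(n-r)-\frac r2+1}\cdot\nu_r(a)^{\frac r2-1}$ type shift — one arrives at the total power $\nu_r(a)^{s-n+\frac r2}$ claimed. Finally, grouping the integral by the value $\nu_r(a)=q^{-\ell}$ gives the stated sum over $\ell\in\bbZ$, and \lmref{L:supp for para vec} (together with the polynomiality/homogeneity \eqref{E:Whittaker homo} of the spherical Whittaker function) shows $W_v(m_r(a))=0$ unless the dominant-cone coordinates of $a$ are nonnegative, forcing the $\ell$-th summand to vanish for all $\ell\gg0$; absolute convergence for $\Re(s)\gg0$ then follows from the standard estimate that each $\ell$-summand is a finite sum bounded by $q^{\ell(\sigma-n+r/2)}$ times the Whittaker values.

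The main obstacle I expect is the measure and modular-character bookkeeping in passing from $\int_{V_r\backslash H_r}$ to $\int_{Z_r\backslash\GL_r}$: one must carefully track $\delta_{B_{H_r}}$, $\delta_{Q_r}$, the normalization $\delta_{Q_r}(m_r(a))=\nu_r(a)^{r-1}$, and the Jacobian from moving $t$ past $\bar x$ in $\bar X_{n,r}$, to confirm that all the powers of $\nu_r(a)$ combine to exactly $s-n+\frac r2$ and that the constant in front is $1$ under the measure choices of \S\ref{SSS:Haar}. A secondary technical point is justifying the $r<n$ reduction of the $R_{r,m}$-integral cleanly — i.e. that conjugating $\bar X_{n,r}(F)$ by $R_{r,m}$ preserves both the group and its Haar measure — which uses the explicit normalizer property in \lmref{L:general AL elt} and the lattice description of $R_{r,m}$; once that is in place, the rest is the routine unfolding sketched above.
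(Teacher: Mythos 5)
The overall unfolding (Iwasawa decomposition $H_r=Q_rR_{r,m}$, right $R_{r,m}$-invariance of $v$ and $\xi^m_{\tau,s}$, and then using \lmref{L:supp of W_v} to collapse the $\bar X_{n,r}$-integral) is the same route the paper takes, and the lemmas you cite for the $\ell\gg 0$ vanishing are the right ones. However, there is a concrete error in the middle step that you do not recover from: you assert that moving $t\in T_r(F)$ past $\bar x\in\bar X_{n,r}(F)$ ``only rescales the entries of $\bar x$ by units,'' and you then conclude that the $\bar X_{n,r}$-integral contributes $\mathrm{vol}(\bar X_{n,r}(\frak o))=1$ with the support condition $\bar x\in\bar X_{n,r}(\frak o)$. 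Both parts of this are wrong when $t=m_r(t')$ with $t'\in A_r(F)\setminus A_r(\frak o)$. Explicitly, writing $\bar x$ with block $x\in\mathrm{Mat}_{(n-r)\times r}$, the conjugation $m_r(a)^{-1}\bar x\,m_r(a)$ sends $x\mapsto xa$; this is not a rescaling by units, the correct support condition from \lmref{L:supp of W_v} is on $m_r(a)^{-1}\bar x\,m_r(a)$ rather than on $\bar x$, and after the substitution $\bar y=m_r(a)^{-1}\bar x\,m_r(a)$ the change of variables picks up the Jacobian $\nu_r(a)^{r-n}$. That Jacobian is precisely what turns the exponent $s-\tfrac r2$ (which one gets from $f_{\xi^m_{\tau,s}}(m_r(a))=W(a)\nu_r(a)^{s+\frac r2-1}$ together with the comparison $\delta_{B_{H_r}}^{-1}(m_r(a))\delta_{B_r}(a)=\nu_r(a)^{1-r}$) into the stated $s-n+\tfrac r2$.

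You do eventually acknowledge that a Jacobian must appear (in the final paragraph), but the factor you exhibit, $\nu_r(a)^{-(n-r)-\frac r2+1}\cdot\nu_r(a)^{\frac r2-1}=\nu_r(a)^{r-n}$, is only the Jacobian; multiplied against $\nu_r(a)^{s+\frac r2-1}$ it yields $\nu_r(a)^{s+\frac{3r}{2}-n-1}$, not $\nu_r(a)^{s-n+\frac r2}$. The missing piece in your bookkeeping is the $\nu_r(a)^{1-r}$ coming from $\delta_{B_{H_r}}^{-1}\delta_{B_r}$. So the exponent does not come out as claimed with the factors you list, and the earlier ``rescales by units'' claim directly contradicts needing any Jacobian at all. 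A second, smaller gap: for the $\ell\gg0$ vanishing you invoke \lmref{L:supp for para vec} to force nonnegative dominant coordinates, but that lemma requires $U_n(\frak o)\subset K$, which holds when $K=K_{n,m}$ (the $r<n$ case) but not when $K=R_{n,m}$ (the $r=n$ case). For $r=n$ one only has a lower bound $\lambda_j\ge N$ for some $N$ depending on $v$ (which still suffices, as the paper notes), so the argument needs to be phrased in terms of such a bound rather than nonnegativity.
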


\begin{proof}
In the following computations, we assume implicitly that $s$ is in the domain of convergence. First note that we have the 
identification
\[
V_r(F)\backslash Q_r(F)\cong Z_r(F)\backslash\GL_r(F).
\]
Then by the Iwasawa decomposition $H_r(F)=Q_r(F)R_{r,m}$ together with \eqref{E:RS integral in general} and 
\eqref{E:f_xi, m}, we find that 
\begin{align*}
\Psi_{n,n}(v\ot\xi^m_{\tau,s})
&=
\int_{ Z_n(F)\backslash\GL_n(F)}
W_v(m_n(a))W(a;\a_1,\a_2,\ldots,\a_n;\b{\psi})\nu_n(a)^{s-\frac{n}{2}}da\\
&=
\sum_{\ell\in\bbZ}\int_{a\in Z_n(F)\backslash\GL_n(F),\,\nu_n(a)=q^{-\ell}}
W_v(m_n(a))W(a;\a_1,\a_2,\ldots,\a_n;\b{\psi})\nu_n(a)^{s-\frac{n}{2}}da
\end{align*}
if $r=n$. On the other hand, by \lmref{L:supp of W_v} and the fact that both $R_{r,m}$ and 
$\b{X}_{n,r}(\frak{o})$ are contained in $K_{n,m}$, we get that
\begin{align*}
\Psi_{n,r}(v\ot\xi^m_{\tau,s})
&=
\int_{Z_r(F)\backslash\GL_r(F)}\int_{\b{X}_{n,r}(F)}
W_v(\b{x}m_r(a))W(a;\a_1,\a_2,\ldots,\a_r;\b{\psi})\nu_r(a)^{s-\frac{r}{2}}d\b{x}da\\
&=
\int_{Z_r(F)\backslash\GL_r(F)}\int_{\b{X}_{n,r}(F)}
W_v(m_r(a)\b{x})W(a;\a_1,\a_2,\ldots,\a_r;\b{\psi})\nu_r(a)^{s-n+\frac{r}{2}}d\b{x}da\\
&=
\int_{Z_r(F)\backslash\GL_r(F)}
W_v(m_r(a))W(a;\a_1,\a_2,\ldots,\a_r;\b{\psi})\nu_r(a)^{s-n+\frac{r}{2}}da\\
&=
\sum_{\ell\in\bbZ}\int_{a\in Z_r(F)\backslash\GL_r(F),\,\nu_r(a)=q^{-\ell}}
W_v(m_r(a))W(a;\a_1,\a_2,\ldots,\a_r;\b{\psi})\nu_r(a)^{s-n+\frac{r}{2}}da
\end{align*}
if $r<n$. This proves the first assertion. To prove the $\ell$-th summand vanishes for all $\ell\gg 0$, we can use the 
the Iwasawa decomposition of $\GL_r$ to derive (recall $R_{r,m}\cap M_r(F)=M_r(\frak{o})$)
\begin{align}\label{E:exp of RS int}
\begin{split}
&\int_{a\in Z_r(F)\backslash\GL_r(F),\,\nu_r(a)=q^{-\ell}}
W_v(m_r(a))W(a;\a_1,\a_2,\ldots,\a_r;\b{\psi})\nu_r(a)^{s-n+\frac{r}{2}}da\\
&=
\sum_{\la\in P^+_{\GL_r},\,\,{\rm tr}(\la)=\ell}
W_v(m_r(\varpi^\la))W(\varpi^\la;\a_1,\a_2,\ldots,\a_r;\b{\psi})\nu_r(\varpi^\la)^{\frac{1}{2}-n+\frac{r}{2}}.
\end{split}
\end{align}
Now if $\la=\la_1\e_1+\la_2\e_2+\cdots+\la_r\e_r\in P^+_{\GL_r}$, then there exists $N\in\bbZ$ depending only on 
$v$, such that $W_v(\varpi^\la)=0$ whenever $\la_j<N$ for some $1\le j\le r$. Actually, we have $N=0$ if $r<n$ by
\lmref{L:supp for para vec}. From this one sees that the summation in \eqref{E:exp of RS int} is a finite sum and each of 
its term vanishes when $\ell\gg 0$. This completes the proof.
\end{proof}

\subsection{The key proposition}
Now we are in the position to state and prove \propref{P:main prop}. As mentioned in the beginning of this section, this 
proposition is the key to the proofs of our main results. Its proof, on the other hand, can be viewed as a generalization of
the one given by Jacquet, Piatetski-Shapiro and Shalika (\cite{JPSS1981}, \cite{JPSS1983}) to our case. We note that 
Kaplan also used the same technique in \cite{Kaplan2013} to establish the relations between the $L$-factors for generic representations of $\SO_{2n}\x\GL_r$ defined by the Rankin-Selberg integrals and Langlands-Shahidi's method.

\begin{prop}\label{P:main prop}
Let $K=K_{n,m}$ if $r<n$ and $K=R_{n,m}$ if $r=n$. Then there exist linear maps
\[
\Xi^m_{n,r}: \cV_{\pi}^{K}\longto\cS_r
\]
sending $v$ to $\Xi^m_{n,r}(v;X_1, X_2,\ldots,X_r)$ such that the followings are satisfied:
\begin{itemize}
\item[(1)]
We have
\[
\Xi^m_{n,r}(v;q^{-s+1/2}\alpha_1,q^{-s+1/2}\alpha_2, \ldots,q^{-s+1/2}\alpha_r)
=
\frac{L(2s,\phi_{J(\tau)},\bigwedge^2)\Psi_{n,r}(v\ot \xi^m_{\tau,s})}{L(s,\phi_\pi\ot\phi_{J(\tau)})}
\]
for every $s\in\bbC$.
\item[(2)]
We have $\pi(u_{n,r,m})v\in\cV_\pi^K$ and the functional equation
\[
\Xi^m_{n,r}(\pi(u_{n,r,m})v; X_1^{-1},X_2^{-1}, \ldots, X_r^{-1})=\e_\pi^r(X_1X_2\cdots X_r)^{(a_\pi-m)}
\Xi^m_{n,r}(v;X_1, X_2, \ldots,X_r)
\]
holds, where $u_{n,r,m}\in G_n(F)$ is given by \eqref{E:u_r,m}.
\item[(3)]
The kernel of $\Xi^m_{n,r}$ is given by 
\[
ker(\Xi^m_{n,r})=\stt{v\in \cV_\pi^K\mid \text{$W_v(t)=0$ for every $t\in T_r$}}.
\]
\item[(4)]
The relation 
\[
\Xi^m_{n,r}(v;X_1,X_2, \ldots,X_{r-1},0)=\Xi^m_{n,r-1}(v;X_1, X_2\ldots,X_{r-1})
\]
holds for $v\in\cV_\pi^{K_{n,m}}$ and $2\leq r\leq n$.
\item[(5)]
We have
\[
\Xi^m_{n,n}(\varphi\star v;X_1, X_2,\ldots, X_n)
=\sS^0_{n,m}(\varphi)\cdot \Xi^m_{n,n}(v; X_1, X_2,\ldots, X_r)
\]
for $\varphi\in\cH(H_n(F)//R_{n,m})$, where $\sS^0_{n,m}$ is the algebra isomorphism given by 
\eqref{E:Satake for SO(2n)}.
\end{itemize}
\end{prop}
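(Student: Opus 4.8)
The plan is to establish the proposition first for $r=n$ (where $K=R_{n,m}$ and all the analysis lives) and then to descend to $r<n$ by Ginzburg's device of setting the extra Satake variables $X_{r+1},\dots,X_n$ equal to zero. For $r=n$ I would begin from \lmref{L:exp of RS int}, which for $\Re(s)\gg 0$ expands $\Psi_{n,n}(v\ot\xi^m_{\tau,s})$ as a sum over $\la\in P^+_{\GL_n}$ whose $\tr(\la)$-homogeneous pieces vanish for $\tr(\la)\gg 0$; inserting the Casselman--Shalika formula \eqref{E:Whittaker function for GL} turns the spherical Whittaker value into the Schur function $\chi^{\GL_n}_\la(d_{\ul{\alpha}})$, and by the homogeneity \eqref{E:Whittaker homo} the substitution $X_i=q^{-s+1/2}\alpha_i$ makes the sum a power series in the $X_i$ with coefficients in finite-dimensional subspaces of $\cS_n$. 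I then define
\[
\Xi^m_{n,n}(v;X_1,\dots,X_n)\;:=\;\frac{L(2s,\phi_{J(\tau)},\bigwedge^2)}{L(s,\phi_\pi\ot\phi_{J(\tau)})}\,\Psi_{n,n}(v\ot\xi^m_{\tau,s})\Big|_{X_i=q^{-s+1/2}\alpha_i},
\]
so that (1) holds by construction and $\Xi^m_{n,n}$ is manifestly linear; the substance is that this lies in $\cS_n$. I would prove this by the Jacquet--Piatetski-Shapiro--Shalika method, in the form adapted to $\SO_{2n}\x\GL_r$ by Kaplan (\cite{Kaplan2013}), using as inputs the Hecke-equivariance (5), the functional equation \eqref{E:FE}, and the unramified computations of Gelbart--Piatetski-Shapiro and Ginzburg (\cite[Proposition A.1]{GPSR1987}, \cite[Theorem B]{Ginzburg1990}), with \lmref{L:GK method for m} controlling the $\bigwedge^2$-denominator. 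Property (5) is itself immediate: unfolding the $V_n\backslash H_n$-integration gives $\Psi_{n,n}((\varphi\star v)\ot\xi^m_{\tau,s})=\Psi_{n,n}(v\ot\rho(\varphi)\xi^m_{\tau,s})$ for $\varphi\in\cH(H_n(F)//R_{n,m})$ as in the proof of \lmref{L:existence}, and $\rho(\varphi)$ acts on the line $I_n(\tau,s)^{R_{n,m}}$ by $\sS^0_{n,m}(\varphi)$ evaluated at the Satake parameters of $\tau_s$, namely $(q^{-s+1/2}\alpha_1,\dots,q^{-s+1/2}\alpha_n)=(X_1,\dots,X_n)$, by \eqref{E:Satake for SO(2n)}, \eqref{E:satake for H} and the diagram \eqref{E:diagram}. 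For (3), the $\la$-coefficient of the expansion reads off $W_v(m_n(\varpi^\la))$, so linear independence of the $\chi^{\GL_n}_\la$ together with the $T_n(\frak{o})$-invariance of $W_v$ (\lmref{L:supp for para vec}) identifies $\ker(\Xi^m_{n,n})$ with $\{v:W_v|_{T_n}\equiv 0\}$.

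The functional equation (2) I would prove uniformly in $1\le r\le n$: \lmref{L:general AL elt} gives $\pi(u_{n,r,m})v\in\cV_\pi^K$, and then feeding \lmref{L:GK method for m} — which supplies the factor $\omega_{\tau_s}(\varpi)^m=(X_1\cdots X_r)^m$ together with the $\bigwedge^2$ $L$-ratio cancelling the normalization of $M^\dagger_\psi(\tau,s)$ — into \eqref{E:FE} for $\t{\Psi}_{n,r}$ (whose Weyl element $\delta_{n,r}$ of \eqref{E:delta_n,r} provides the link to $u_{n,r,m}$), converting $\gamma(s,\pi\x\tau,\psi)=\omega_\tau(-1)^n\gamma(s,\phi_\pi\ot\phi_\tau,\psi)$ (\thmref{T:RS=Gal gamma}) and $\gamma(s,\tau,\bigwedge^2,\psi)=\gamma(s,\phi_{J(\tau)},\bigwedge^2,\psi)$ (\lmref{L:LS=Gal}), and substituting $\epsilon(s,\phi_\pi,\psi)=\varepsilon_\pi q^{-a_\pi(s-\frac{1}{2})}$ from \eqref{E:epsilon for pi}, the $q^{-s}$- and $\alpha$-bookkeeping collapses to $\Xi^m_{n,r}(\pi(u_{n,r,m})v;X_1^{-1},\dots,X_r^{-1})=\varepsilon_\pi^r(X_1\cdots X_r)^{a_\pi-m}\Xi^m_{n,r}(v;X_1,\dots,X_r)$.

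Finally, for $r<n$: since $R_{n,m}\subseteq K_{n,m}$, every $v\in\cV_\pi^{K_{n,m}}$ lies in $\cV_\pi^{R_{n,m}}$ so $\Xi^m_{n,n}(v;X)$ is defined, and \lmref{L:supp for para vec} forces its expansion to run only over $\la$ with $\la_n\ge 0$, hence $\Xi^m_{n,n}(v;X)$ is an honest symmetric polynomial; I then put $\Xi^m_{n,r}(v;X_1,\dots,X_r):=\Xi^m_{n,n}(v;X_1,\dots,X_r,0,\dots,0)$, which gives (4) by iteration. To match this with (1) for $\GL_r$, I would invoke Ginzburg's identity writing the $\SO_{2n+1}\x\GL_r$ integral as the $\SO_{2n+1}\x\GL_n$ integral for the degenerate principal series with Satake data $(\alpha_1,\dots,\alpha_r,0,\dots,0)$ — where \lmref{L:supp of W_v} is precisely what collapses the $\b{X}_{n,r}$-integration — together with the fact that, for $\tau'$ of $\GL_r$ with Satake data $(\alpha_1,\dots,\alpha_r)$, both $L(s,\phi_\pi\ot\phi_{J(\tau')})$ and $L(2s,\phi_{J(\tau')},\bigwedge^2)$ are the $(\alpha_{r+1},\dots,\alpha_n)\to 0$ specializations of their $\GL_n$-counterparts; the same specialization retains only the terms with $\la_{r+1}=\dots=\la_n=0$, which read off $W_v|_{T_r}$, giving (3) for $r<n$. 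The hardest steps will be (a) the fractional-ideal statement behind (1) for $r=n$ — that the generator is exactly $\tfrac{L(s,\phi_\pi\ot\phi_{J(\tau)})}{L(2s,\phi_{J(\tau)},\bigwedge^2)}$ rather than a proper sub-ideal — which is where the full JPSS-type analysis, through the interplay of \eqref{E:FE}, \lmref{L:GK method for m} and (5), is genuinely needed; and (b) justifying the ``$\alpha_{r+1}=\dots=\alpha_n=0$'' step in Ginzburg's identity as an identity of rational functions, which is legitimate because every quantity in sight is a symmetric polynomial in $\alpha_1,\dots,\alpha_n$.
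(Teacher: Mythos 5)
Your proposal is correct in substance and follows essentially the same route as the paper, but a few points of emphasis and organization differ, and one of your two ``hardest steps'' is mis-framed in a way worth flagging.

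On organization: the paper defines $\Xi^m_{n,r}$ directly for each $1\le r\le n$ as a formal Laurent series in an auxiliary variable $Y$ (with coefficients in $\cS_r$), whereas you only construct $\Xi^m_{n,n}$ this way and then \emph{define} $\Xi^m_{n,r}$ for $r<n$ by specialization $X_{r+1}=\cdots=X_n=0$. These are equivalent: the content of the paper's proof of $(4)$ is exactly your Ginzburg specialization $\Psi^m_{n,r}(v;X_1,\dots,X_{r-1},0;Y)=\Psi^m_{n,r-1}(v;X_1,\dots,X_{r-1};Y)$, and in either organization you need \lmref{L:supp for para vec} (to keep $\la_n\ge 0$ so the Schur polynomials survive $X_n\to 0$) and the $\GL_r$-branching identity \eqref{E:Ginz rel} together with \lmref{L:supp of W_v} (to collapse the $\b X_{n,r}$-integration inside \lmref{L:exp of RS int}). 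Your derivation of $(5)$ by moving $\varphi$ onto the section and using that $\rho(\varphi)$ acts on the line $I_n(\tau,s)^{R_{n,m}}$ by the Satake character at $(q^{-s+\frac12}\alpha_j)$ is somewhat cleaner than the paper's, which unfolds through $W_{\varphi\star v}$ via $\imath_{n,m}$ and \eqref{E:middle satake}; both end up invoking the commutative diagram \eqref{E:diagram}.

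The mis-framing: you describe the crux of $(1)$ (for $r=n$) as a ``fractional-ideal'' statement, that the generator is \emph{exactly} $L(s,\phi_\pi\ot\phi_{J(\tau)})/L(2s,\phi_{J(\tau)},\bigwedge^2)$ ``rather than a proper sub-ideal''. That would be a lower bound on the ideal; but $(1)$ only requires an \emph{upper} bound, namely that $L(s,\phi_\pi\ot\phi_{J(\tau)})$ (and $L(2s,\phi_{J(\tau)},\bigwedge^2)^{-1}$) serve as a common denominator so that the ratio lands in $\cS_r$. The mechanism the paper actually uses to prove this is the formal Laurent series version of the functional equation \eqref{E:FE}: after writing $\Xi^m_{n,r}(v;X;Y)=\sum_{\ell\ge N_v}\Xi^m_{n,r,\ell}(v;X)Y^\ell$ (bounded below in $Y$), one applies \lmref{L:GK method for m}, \lmref{L:LS=Gal}, \thmref{T:RS=Gal gamma}, and $\epsilon(s,\phi_\pi,\psi)=\varepsilon_\pi q^{-a_\pi(s-\frac12)}$ to convert \eqref{E:FE} into the identity
\[
\Xi^m_{n,r}(\pi(u_{n,r,m})v;X^{-1};Y^{-1})
=
\epsilon_{\phi_\pi,m}(X;Y)\,
\Xi^m_{n,r}(v;X;Y),
\]
and since the left side is bounded \emph{above} in $Y$ while the right side is bounded below, the series is finite and $Y=1$ is allowed. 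Property $(5)$ plays no role in this truncation step; it belongs to the later applications (the oldform computations). You list the right ingredients (\eqref{E:FE}, \lmref{L:GK method for m}, the $\varepsilon$-factor), so this is a framing error rather than a structural gap, but as stated it overclaims what $(1)$ says and imports an unnecessary hypothesis.
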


We divide the proof into five parts, one for each item.

\subsubsection{Proof of $(1)$}
Let $v\in\cV_\pi^K$. Motivated by \eqref{E:exp of RS int}, we consider the following integral 
\begin{align}\label{E:Psi_r,m,l}
\begin{split}
\Psi^m_{n,r,\ell}(v; X_1, X_2,\ldots, X_r)
&=
\int_{a\in Z_r(F)\backslash\GL_r(F),\,\nu_r(a)=q^{-\ell}}
W_v(m_r(a))W(a;X_1,X_2,\ldots,X_r;\b{\psi})\nu_r(a)^{\frac{r+1}{2}-n}da\\
&=
\sum_{\la\in P^+_{\GL_r},\,\,{\rm tr}(\la)=\ell}
W_v(m_r(\varpi^\la))W(\varpi^\la;X_1,X_2,\ldots,X_r;\b{\psi})\nu_r(\varpi^\la)^{\frac{r+1}{2}-n}.
\end{split}
\end{align}
We know this is a finite sum and hence gives rise to a homogeneous polynomial of 
degree $\ell$ in $\cS_r$ by \eqref{E:Whittaker homo}. Furthermore, there exists an integer $N_{v}$ depending only on 
$v$ such that $\Psi^m_{n,r,\ell}(v; X_1, X_2,\ldots, X_r)=0$ for all $\ell<N_v$. We then define the following formal 
Laurent series 
\begin{equation}\label{E:Psi_r,m}
\Psi^m_{n,r}(v;X_1,X_2,\ldots, X_r;Y)
=
\sum_{\ell\in\bbZ}\Psi^m_{n,r,\ell}(v;X_1,X_2,\ldots, X_r)Y^\ell
=
\sum_{\ell\ge N_{v}}\Psi^m_{n,r,\ell}(v;X_1,X_2,\ldots, X_r)Y^\ell
\end{equation}
with coefficient in $\cS_r$. Now \lmref{L:exp of RS int} implies 
\begin{equation}\label{E:Psi eva}
\Psi^m_{n,r}(v;\a_1,\a_2,\ldots,\a_r; q^{-s+\frac{1}{2}})
=
\Psi_{n,r}(v\ot\xi^m_{\tau,s})
\end{equation}
for $\Re(s)\gg 0$. On the other hand, let $P_{\phi_\pi}(Y)\in\bbC[Y]$ such that $L(s,\phi_\pi)=P_{\phi_\pi}(q^{-s})^{-1}$ 
and put
\[
P_{\phi_\pi}(X_1, X_2,\ldots, X_r; Y)
=
\prod_{j=1}^r P_{\phi_\pi}(q^{-\frac{1}{2}}X_jY)
=
\sum_{\ell\ge 0} a_\ell(X_1, X_2,\ldots, X_r)Y^\ell.
\]
Then $a_\ell(X_1, X_2, \ldots, X_r)$ is a homogeneous polynomial of degree $\ell$ in $\cS_r$
with $a_0(X_1, X_2, \ldots, X_r)=1$. We have 
\[
L(s,\phi_\pi\ot\phi_{J(\tau)})
=
P_{\phi_\pi}(\a_1,\a_2,\ldots, \a_r;q^{-s+\frac{1}{2}})^{-1}
\quad\text{and}\quad
L(1-s,\phi_\pi\ot\phi_{J(\tau^*)})
=
P_{\phi_\pi}(\a^{-1}_1,\a^{-1}_2,\ldots, \a^{-1}_r;q^{s-\frac{1}{2}})^{-1}.
\]
We also put
\[
P_{\bigwedge^2}(X_1, X_2,\ldots,X_r; Y)
=
\prod_{1\le i<j\le r}(1-q^{-1}X_iX_jY^2).
\]
Then 
\[
L(2s,\phi_{J(\tau)},\Wedge{}^2)
=
P_{\bigwedge^2}(\a_1, \a_2,\ldots,\a_r; q^{-s+\frac{1}{2}})^{-1}
\quad\text{and}\quad
L(2-2s,\phi_{J(\tau^*)},\Wedge{}^2)
=
P_{\bigwedge^2}(\a^{-1}_1, \a^{-1}_2,\ldots,\a^{-1}_r; q^{s-\frac{1}{2}})^{-1}.
\]
By the geometric series expansions, one can write
\[
P_{\bigwedge^2}(X_1, X_2,\ldots,X_r; Y)^{-1}
=
\sum_{\ell\ge 0}
b_\ell(X_1,X_2,\ldots, X_r)Y^\ell.
\]
Again, $b_\ell(X_1, X_2, \ldots, X_r)$ is a 
homogeneous polynomial of degree $\ell$ in $\cS_r$ with $b_0(X_1, X_2, \ldots, X_r)=1$. But certainly this is not a finite 
sum.\\

At this point, let us put 
\begin{equation}\label{E:Xi_r,m}
\Xi^m_{n,r}(v;X_1,X_2,\ldots,X_r;Y)
=
\frac{P_{\phi_\pi}(X_1, X_2,\ldots, X_r; Y)\Psi^m_{n,r}(v;X_1,X_2,\ldots,X_r;Y)}
{P_{\bigwedge^2}(X_1, X_2,\ldots,X_r; Y)}
\end{equation}
which is again a formal Laurent series with coefficients in $\cS_r$. It can be written as
\begin{equation}\label{E:exp Xi_r,m,l}
\Xi^m_{n,r}(v;X_1,X_2,\ldots,X_r;Y)
=
\sum_{\ell\ge N_{v}}
\Xi^m_{n,r,\ell}(v;X_1,X_2\ldots,X_r)Y^\ell
\end{equation}
with
\begin{equation}\label{E:Xi_r,m,l}
\Xi^m_{n,r,\ell}(v;X_1,X_2\ldots,X_r)
=
\sum_{\ell_1+\ell_2+\ell_3=\ell}
\Psi^m_{n,r,\ell_1}(v;X_1,X_2\ldots,X_r)
a_{\ell_2}(X_1, X_2,\ldots, X_r)
b_{\ell_3}(X_1,X_2,\ldots, X_r)
\end{equation}
which is a finite sum. Clearly, $\Xi^m_{n,r,\ell}(v;X_1,X_2\ldots,X_r)$ is a homogeneous polynomial of degree $\ell$ in 
$\cS_r$. Also, it follows from \eqref{E:Psi eva} that
\begin{equation}\label{E:formal RS int eva}
\Xi^m_{n,r}(v;\a_1,\a_2,\ldots,\a_r; q^{-s+\frac{1}{2}})
=
\frac{L(2s,\phi_{J(\tau)},\bigwedge^2)\Psi_{n,r}(v\ot\xi^m_{\tau,s})}{L(s,\phi_\pi\ot\phi_{J(\tau)})}
\end{equation}
for $\Re(s)\gg 0$.
\\

A priori, the sum in \eqref{E:exp Xi_r,m,l} is infinite; however, we will show that it is indeed a finite sum by using the 
functional equation \eqref{E:FE}. 
To be more precise, let $w=\delta_r^{-1}w_{r,m}\in H_r(F)$ and $\h{w}\in G_n(F)$ be its image 
under the embedding \eqref{E:embedding}.  Then one checks that 
\[
\delta_{n,r}\h{w}
=
u_{n,r,m}
\]
where $\delta_{n,r}$ and $u_{n,r,m}$ are elements in $G_n(F)$ given by \eqref{E:delta_n,r} and 
\eqref{E:u_r,m}, respectively.
Now by \eqref{E:dual RS integral in general}, \lmref{L:LS=Gal} and \lmref{L:GK method for m}, we have
\begin{align*}
\t{\Psi}_{n,r}(v\ot\xi^m_{\tau,s})
=
\t{\Psi}_{n,r}(\pi(\h{w})v\ot\rho(w)\xi^m_{\tau,s})
=
\omega_{\tau_s}(\varpi)^m\frac{L(2-2s,\phi_{J(\tau^*)},\bigwedge^2)}{L(2s,\phi_{J(\tau)},\bigwedge^2)}
\Psi_{n,r}(\pi(u_{n,r,m})v\ot\xi^m_{\tau^*,1-s}).
\end{align*}
On the other hand, we also have
\[
\gamma(s,\pi\x\tau,\psi)
=
\epsilon(s,\phi_\pi\ot\phi_{J(\tau)},\psi)\frac{L(1-s,\phi_\pi\ot\phi_{J(\tau^*)})}{L(s,\phi_\pi\ot\phi_{J(\tau)})}
\]
by \thmref{T:RS=Gal gamma}. Together, we find that  \eqref{E:FE} can be written as 
\begin{equation}\label{E:FE sp}
\frac{L(2-2s,\phi_{J(\tau^*)},\bigwedge^2)\Psi_{n,r}(\pi(u_{n,r,m})v\ot\xi^m_{\tau^*,1-s})}{L(1-s,\phi_\pi\ot\phi_{J(\tau^*)})}
=
\omega_{\tau_s}(\varpi)^{-m}\epsilon(s,\phi_\pi\ot\phi_{J(\tau)},\psi)
\frac{L(2s,\phi_{J(\tau)},\bigwedge^2)\Psi_{n,r}(v\ot\xi^m_{\tau,s})}{L(s,\phi_\pi\ot\phi_{J(\tau)})}.
\end{equation}
Now because $u_{n,r,m}$ normalizes $K$ by \lmref{L:general AL elt}, we have $\pi(u_{n,r,m})v\in\cV_\pi^K$, and
hence \lmref{L:exp of RS int} gives
\begin{align*}
\Psi_{n,r}(\pi(&u_{n,r,m})v\ot\xi^m_{\tau^*,1-s})\\
&=
\sum_{\ell\in\bbZ}\int_{a\in Z_r(F)\backslash\GL_r(F),\,\nu_r(a)=q^{-\ell}}
W_{\pi(u_{n,r,m})v}(m_r(a))W(a;\a^{-1}_1,\a^{-1}_2,\ldots,\a^{-1}_r;\b{\psi})\nu_r(a)^{1-s-n+\frac{r}{2}}da
\end{align*}
for $\Re(s)\ll 0$. It follows that the formal Laurent series 
\[
\Xi^m_{n,r}(\pi(u_{n,r,m})v;X_1, X_2,\ldots, X_r;Y)
=
\sum_{\ell\ge -N_{\pi(u_{n,r,m})v}}
\Xi^m_{n,r,\ell}(\pi(u_{n,r,m})v;X_1,X_2\ldots,X_r)Y^\ell
\]
satisfies 
\begin{equation}\label{E:dual formal RS int eva}
\Xi^m_{n,r}(\pi(u_{n,r,m})v;\a_1^{-1},\a_2^{-1},\ldots,\a_r^{-1}; q^{s-\frac{1}{2}})
=
\frac{L(2-2s,\phi_{J(\tau^*)},\bigwedge^2)\Psi_{n,r}(\pi(u_{n,r,m})v\ot\xi^m_{\tau^*,1-s})}{L(1-s,\phi_\pi\ot\phi_{J(\tau^*)})}
\end{equation}
for $\Re(s)\ll 0$. Let us upgrade the functional equation \eqref{E:FE sp} to the one between formal Laurent series. 
For this, we put
\[
\epsilon_{\phi_\pi,m}(X_1,X_2,\ldots,X_r;Y)
=
\e^r_\pi(X_1X_2\cdots X_r)^{a_\pi-m}Y^{(a_\pi-m)r}.
\]
This is a unit in $\cS_r[Y]$ and we have
\[
\epsilon_{\phi_\pi,m}(\a_1,\a_2,\ldots,\a_r;q^{-s+\frac{1}{2}})
=
\omega_{\tau_s}(\varpi)^{-m}\epsilon(s,\phi_\pi\ot\phi_{J(\tau)},\psi)
\]
for $s\in\bbC$. Applying \eqref{E:formal RS int eva}, \eqref{E:dual formal RS int eva} and 
\eqref{E:FE sp}, we get that
\begin{equation}\label{E:FE for formal RS int}
\Xi_{r,m}(\pi(u_{n,r,m})v;X_1^{-1},X_2^{-1},\ldots, X_r^{-1}; Y^{-1})
=
\epsilon_{\phi_\pi,m}(X_1, X_2,\ldots, X_r;Y)
\Xi_{r,m}(v;X_1,X_2,\ldots,X_r;Y).
\end{equation}
This implies that \eqref{E:exp Xi_r,m,l} is a finite sum and hence we can define
\[
\Xi^m_{n,r}(v;X_1,X_2,\ldots,X_r)
=
\Xi^m_{n,r}(v;X_1,X_2,\ldots,X_r;1)\in\cS_r.
\]
Observe that
\begin{equation}\label{E:Xi relation}
\Xi^m_{n,r}(v;YX_1, YX_2,\ldots, YX_r)=\Xi^m_{n,r}(v;X_1,X_2,\ldots,X_r;Y)
\end{equation}
by the fact that $\Xi^m_{n,r,\ell}(v;X_1,X_2,\ldots,X_r)$ is a homogeneous polynomial of degree $\ell$. 
Then \eqref{E:formal RS int eva} implies
\begin{align*}
\Xi^m_{n,r}(v;q^{-s+\frac{1}{2}}\a_1, q^{-s+\frac{1}{2}}\a_2,\ldots, q^{-s+\frac{1}{2}}\a_r)
=
\frac{L(2s,\phi_{J(\tau)},\bigwedge^2)\Psi_{n,r}(v\ot\xi^m_{\tau,s})}{L(s,\phi_\pi\ot\phi_{J(\tau)})}
\end{align*}
for $s\in\bbC$. This defines the linear map $\Xi_{n,r}^m$ and completes the proof of $(1)$.\qed

\subsubsection{Proof of $(2)$}
This follows immediately by putting $Y=1$ in \eqref{E:FE for formal RS int}.\qed

\subsubsection{Proof of $(3)$}
We first note that $m_r(A_r(F))=T_r(F)$ (cf. \eqref{E:levi of Q}). Also, since $m_r(A_r(\frak{o}))\subset R_{r,m}$, 
it suffices to show
\[
\Xi^m_{n,r}(v;X_1,X_2,\ldots,X_r)=0
\quad\text{if and only if}\quad
W_v(m_r(\varpi^\la))=0\quad\text{for all $\la\in X_\bullet(A_r)$}.
\]
For a given $\la\in X_\bullet(A_r)$, let us denote
\[
\lambda=\la_1\e_1+\lambda_2\e_2+\cdots+\lambda_r\e_r.
\] 
Then we know that $W_v(m_r(\varpi^\la))=0$ if $\la_j< N$ for some $1\le j\le r$, where $N$ is an integer depending 
on $v$. Since $m_r(\GL_r(\frak{o}))\subset K$, a standard arguments show that  $W_v(m_r(\varpi^\lambda))=0$ if 
$\lambda\nin P^+_{\GL_r}$. Together, we obtain
\begin{equation}\label{E:rought support}
W_v(m_r(\varpi^\lambda))=0\quad\text{if $\lambda\nin P^+_{\GL_r}$ or if $\la\in P^+_{\GL_r}$ with $\la_r<N$}.
\end{equation}
Hence, we are reduced to prove
\begin{equation}\label{E:Xi=0}
\Xi^m_{n,r}(v;X_1,X_2,\ldots,X_r)=0
\quad\text{if and only if}\quad
W_v(m_r(\varpi^\la))=0\quad\text{for all $\la\in P^+_{\GL_r}$ with $\la_r\ge N$}.
\end{equation}
By \eqref{E:Xi relation}, \eqref{E:Xi_r,m} and \eqref{E:Psi_r,m}, we find that 
\begin{equation}\label{E:Xi=0 equiv}
\Xi^m_{n,r}(v;X_1,X_2,\ldots,X_r)=0
\quad\text{if and only if}\quad
\Psi^m_{n,r,\ell}(v;X_1,X_2,\ldots,X_r)=0\quad\text{for all $\ell\in\bbZ$}.
\end{equation}
This is equivalent to 
\[
\Psi^m_{n,r,\ell}(v;\a_1,\a_2,\ldots,\a_r)=0
\]
for all $\ell\in\bbZ$ and $r$-tuple $\ul{\a}=(\a_1,\a_2,\ldots,\a_r)$ of nonzero complex numbers. 
Now by \eqref{E:Whittaker function for GL}, \eqref{E:Psi_r,m,l} and \eqref{E:rought support}, we find that  
\begin{equation}\label{E:exp Psi_r,m,l}
\Psi^m_{n,r,\ell}(v;\a_1,\a_2,\ldots,\a_r)
=
q^{-\ell\left(\frac{r+1}{2}-n\right)}
\sum_{\lambda\in \Upsilon_{\ell}}
W_v(m_r(\varpi^\lambda))\delta^{\frac{1}{2}}_{B_r}(\varpi^\la)\chi^{\GL_r}_\lambda(d_{\ul{\a}})
\end{equation}
where
\[
\Upsilon_\ell
=
\stt{\lambda\in P^+_{\GL_r}\mid {\rm tr}(\lambda)=\ell\,\,\text{and}\,\,\la_r\ge N}
\]
with ${\rm tr}:X_\bullet(A_r)\to\bbZ$ the trace map (with respect to $\e_1, \e_2,\ldots, \e_r$) similar to \eqref{E:trace}. 
Evidently, $\Upsilon_\ell$ is a finite set for all $\ell\in\bbZ$, and is in fact the empty set if $\ell\ll 0$.
Since $\delta^{\frac{1}{2}}_{B_r}(\varpi^\la)\ne 0$ and $\ul{\a}$ can be arbitrary, we conclude from the linear 
independence of characters and \eqref{E:exp Psi_r,m,l} that 
\[
\Psi^m_{n,r,\ell}(v;X_1,X_2,\ldots,X_r)=0
\quad\text{if and only if}\quad
W_v(m_r(\varpi^\la))=0\quad\text{for all $\la\in\Upsilon_\ell$}.
\]
Now \eqref{E:Xi=0} follows from this together with \eqref{E:Xi=0 equiv}.\qed

\subsubsection{Proof of $(4)$}
It is clear from the definitions that 
\[
P_{\phi_\pi}(X_1,X_2,\ldots,X_{r-1},0; Y)
=
P_{\phi_\pi}(X_1,X_2,\ldots,X_{r-1}; Y)
\]
and
\[
P_{\bigwedge^2}(X_1,X_2,\ldots,X_{r-1},0; Y)
=
P_{\bigwedge^2}(X_1,X_2,\ldots,X_{r-1}; Y).
\]
Thus the proof will follow if we could show
\begin{equation}\label{E:r to r-1}
\Psi^m_{n,r}(v;X_1,X_2,\ldots, X_{r-1},0;Y)
=
\Psi^m_{n,r-1}(v;X_1,X_2,\ldots, X_{r-1};Y)
\end{equation}
for $v\in\cV_\pi^{K_{n,m}}$. For this, we follow the idea of Ginzburg in \cite[Theorem B]{Ginzburg1990} to "remove" the
Satake parameter $\a_r$.\\

As in the proof of $(3)$, for a given $\lambda\in X_\bullet(A_{r})$, we write
\[
\lambda=\lambda_1\e_1+\lambda_2\e_2+\cdots+\lambda_r\e_r.
\]
Elements $\la\in X_\bullet(A_r)$ with $\la_r=0$ can be view as in $X_\bullet(A_{r-1})$ via the natural inclusion $X_\bullet(A_{r-1})\subset X_\bullet(A_r)$. Now for a fixed $\la\in P^+_{\GL_r}$, we can regard 
$\chi^{\GL_r}_\la(d_{\ul{\a}})$ as a polynomial in $\cS_r$ as $\ul{\a}$ varies. In particular, we can consider 
$\chi^{\GL_r}_\la(d_{\ul{\a}_0})$ provided that it is defined, where $\ul{\a}_0=(\a_1,\a_2,\ldots,\a_{r-1}, 0)$.
Now a lemma in \cite[Section 4]{Ginzburg1990} implies that this is defined when $\la_r\ge 0$, in which case we have
\begin{equation}\label{E:Ginz rel}
\chi^{\GL_r}_{\la}(d_{\ul{\a}_0})
=
\begin{cases}
\chi^{\GL_{r-1}}_{\la}(d_{\ul{\a}'})\quad&\text{if $\la_r=0$},\\
0\quad&\text{if $\la_r>0$},
\end{cases}
\end{equation}
where $\ul{\a}'=(\a_1,\a_2,\ldots,\a_{r-1})$.\\

Following the proof of Ginzburg, we first apply \lmref{L:supp for para vec} and \eqref{E:Psi_r,m}, \eqref{E:Psi_r,m,l}, 
\eqref{E:Whittaker function for GL} to write ($\Re(s)\gg 0$)
\begin{align*}
\Psi^m_{n,r}(v;\a_1,\a_2,\ldots,\a_r; q^{-s+\frac{1}{2}})
=&
\sum_{\la\in P^{+0}_{\GL_r}}
W_v(m_r(\varpi^\la))\chi^{\GL_r}_\la(d_{\ul{\a}})\delta_{B_r}(\varpi^\la)^{-\frac{1}{2}}\nu_r(\varpi^\la)^{s-n+\frac{r}{2}}\\
&+
\sum_{\la\in P^{++}_{\GL_r}}
W_v(m_r(\varpi^\la))\chi^{\GL_r}_\la(d_{\ul{\a}})\delta_{B_r}(\varpi^\la)^{-\frac{1}{2}}\nu_r(\varpi^\la)^{s-n+\frac{r}{2}}
\end{align*}
where 
\[
P^{+0}_{\GL_r}
=
\stt{\la\in P^+_{\GL_r}\mid \la_r=0}
\quad\text{and}\quad
P^{++}_{\GL_r}
=
\stt{\la\in P^+_{\GL_r}\mid \la_r>0}.
\]
At this point, we let $\a_r=0$ in the above equation. This is legal from our discussions in the previous paragraph. 
Then the second summation vanishes according to \eqref{E:Ginz rel}. On the other hand, since  
\[
\delta_{B_r}(\varpi^\la)^{-\frac{1}{2}}\nu_r(\varpi^\la)^{s-n+\frac{r}{2}}
=
\delta_{B_{r-1}}(\varpi^{\la})^{-\frac{1}{2}}\nu_{r-1}(\varpi^{\la})^{s-n+\frac{r-1}{2}}
\]
for $\la\in P^{+0}_{\GL_r}$ and $P^{+0}_{\GL_r}$ is a disjoint union of $P^{++}_{\GL_{r-1}}$ and $P^{+0}_{\GL_{r-1}}$,
the first summation becomes 
\begin{align*}
\Psi^m_{n,r}(v;\a_1,\a_2,\ldots,\a_{r-1}, 0; q^{-s+\frac{1}{2}})
&=
\sum_{\la\in P^{+0}_{\GL_{r-1}}}
W_v(m_{r-1}(\varpi^\la))\chi^{\GL_{r-1}}_\la(d_{\ul{\a}'})
\delta_{B_{r-1}}(\varpi^\la)^{-\frac{1}{2}}\nu_{r-1}(\varpi^\la)^{s-n+\frac{r-1}{2}}\\
&+
\sum_{\la\in P^{++}_{\GL_{r-1}}}
W_v(m_{r-1}(\varpi^\la))\chi^{\GL_{r-1}}_\la(d_{\ul{\a}'})
\delta_{B_{r-1}}(\varpi^\la)^{-\frac{1}{2}}\nu_{r-1}(\varpi^\la)^{s-n+\frac{r-1}{2}}\\
&=
\Psi^m_{n,r-1}(v;\a_1,\a_2,\ldots,\a_{r-1}; q^{-s+\frac{1}{2}})
\end{align*}
by using \eqref{E:Ginz rel} again. This implies \eqref{E:r to r-1}.\qed

\subsubsection{Proof of $(5)$}
By \eqref{E:Xi_r,m} and the definition of $\Xi^m_{n,n}(v;X_1,X_2,\ldots,X_n)$, it is enough to show
\begin{equation}\label{E:satake and jacquet}
\Psi^m_{n,n}(\varphi\star v;X_1,X_2,\ldots,X_n;Y)
=
\sS^0_{n,m}(\varphi)(YX_1,YX_2,\ldots,YX_n)
\Psi^m_{n,n}(v;X_1,X_2,\ldots,X_n;Y)
\end{equation}
for $\varphi\in\cH(H_n(F)//R_{n,m})$ and $v\in\cV_\pi^{R_{n,m}}$. Let $a\in\GL_n(F)$. We first compute
\begin{align}\label{E:Whittaker and satake}
\begin{split}
W_{\varphi\star v}(m_n(a))
&=
\int_{H_n(F)}
W_v(m_n(a)h^{-1})\varphi(h)dh
=
\int_{Q_n(F)}
W_{v}(m_n(a)p^{-1})\varphi(p)\delta_{Q_n}(p)dp\\
&=
\int_{\GL_n(F)}
W_v(m_n(ab^{-1}))\delta^{\frac{1}{2}}_{Q_n}(m_n(b))
\left(\int_{N_n(F)}\varphi(m_n(b)u)\delta^{\frac{1}{2}}_{Q_n}(m_n(b))du\right)db\\
&=
\int_{\GL_n(F)}
W_v(m_n(ab^{-1}))\delta^{\frac{1}{2}}_{Q_n}(m_n(b))\imath_{r,m}(\varphi)(b)db.
\end{split}
\end{align}
Recall that $\imath_{n,m}(\varphi)\in\cH(\GL_n(F)//\GL_n(\frak{o}))$ is the element defined by \eqref{E:middle satake}.
Next we have 
\begin{align}\label{E:satake for Whittaker}
\begin{split}
\int_{\GL_n(F)}
&\imath_{n,m}(\varphi)(b)W(ab; q^{-s+\frac{1}{2}}\a_1, q^{-s+\frac{1}{2}}\a_2,\ldots, q^{-s+\frac{1}{2}}\a_n)db\\
&=
\sS^0_{n,m}(\varphi)(q^{-s+\frac{1}{2}}\a_1, q^{-s+\frac{1}{2}}\a_2,\ldots, q^{-s+\frac{1}{2}}\a_n)
W(a; q^{-s+\frac{1}{2}}\a_1, q^{-s+\frac{1}{2}}\a_2,\ldots, q^{-s+\frac{1}{2}}\a_n)
\end{split}
\end{align}
by \eqref{E:satake for GL} and the commutative diagram \eqref{E:diagram}. Now by \eqref{E:Psi_r,m}, we can write 
\begin{align*}
\Psi^m_{n,n}&(\varphi\star v; \a_1, \a_2,\ldots, \a_n; q^{-s+\frac{1}{2}})
=
\int_{Z_n(F)\backslash\GL_n(F)}
W_{\varphi\star v}(m_n(a))W(a; q^{-s+\frac{1}{2}}\a_1, q^{-s+\frac{1}{2}}\a_2,\ldots, q^{-s+\frac{1}{2}}\a_n)da.
\end{align*}
Applying \eqref{E:Whittaker and satake}, the right hand side becomes 
\begin{align*}
\int_{Z_n(F)\backslash\GL_n(F)}
\int_{\GL_n(F)}W_v(m_n(ab^{-1}))\imath_{n,m}(\varphi)(b)
W(a; q^{-s+\frac{1}{2}}\a_1, q^{-s+\frac{1}{2}}\a_2,\ldots, q^{-s+\frac{1}{2}}\a_n)
\delta_{Q_n}^{-\frac{1}{2}}(m_n(ab^{-1}))dbda.
\end{align*}
After changing the variable $a\mapsto ab$ and taking \eqref{E:satake for Whittaker} into account, we obtain
\begin{align*}
\int_{Z_n(F)\backslash\GL_n(F)}&
W_v(m_n(a))
\left(\int_{\GL_n(F)}
\imath_{n,m}(\varphi)(b)W(ab; q^{-s+\frac{1}{2}}\a_1, q^{-s+\frac{1}{2}}\a_2,\ldots, q^{-s+\frac{1}{2}}\a_n)db\right)
\delta_{Q_n}^{-\frac{1}{2}}(m_n(a))da\\
&=
\sS^0_{n,m}(\varphi)(q^{-s+\frac{1}{2}}\a_1, q^{-s+\frac{1}{2}}\a_2,\ldots, q^{-s+\frac{1}{2}}\a_n)
\Psi^m_{n,n}(v; \a_1, \a_2,\ldots, \a_n; q^{-s+\frac{1}{2}}).
\end{align*}
This proves \eqref{E:satake and jacquet}.\qed\\

Our proof of \propref{P:main prop} is now complete.\qed

\section{Proof of the Main Results}\label{S:prove}
We prove our main results in this section. For this, we follow the notation and conventions in \S\ref{SS:convention}.
The proof of \thmref{T:main} will be given in \S\ref{SS:RS int attached to newform} and 
\S\ref{SS:RS int attached to oldform}, under the following hypothesis:
\[
\text{$The$ $space$ $\cV_{\pi}^{K_{n,a_\pi}}$ $is$ $one$-$dimensional$ $and$
$\Lambda_{\pi,\psi}$ $is$ $nontrivial$ $on$ $\cV_{\pi}^{K_{n,a_\pi}}$.} 
\]
So in the first two subsections, we fix an element $v_\pi\in\cV_\pi^{K_{n,a_\pi}}$ with $\Lambda_{\pi,\psi}(v_\pi)=1$. 
In \S\ref{SS:n=r=2}, on the other hand, we will prove \thmref{T:main'}, and we will conclude with a comparison between 
bases in \S\ref{SSS:concluding remark}.

\subsection{Rankin-Selberg integrals attached to newforms}\label{SS:RS int attached to newform}
In this subsection, we prove \thmref{T:main} except the inequality \eqref{E:dim bd} whose proof will be postponed to the 
next subsection.

\subsubsection*{Proof of Conjecture \ref{C1} $(2)$ and \eqref{E:main eqn}}
Let $1\le r\le n$ and $u_{n,r,a_\pi}\in J_{n,a_\pi}$ be the elements given by \eqref{E:u_r,m}. Note that 
$u_{n,1,a_\pi}\in K_{n,a_\pi}=J_{n,a_\pi}$ if $a_\pi=0$, while $u_{n,1,a_\pi}\in J_{n,a_\pi}\setminus K_{n,a_\pi}$ if 
$a_\pi>0$. Let $\e_r\in\stt{\pm 1}$ such that $\pi(u_{n,r,a_\pi})v_\pi=\e_r v_\pi$. Certainly, we have $\e_r=1$ for all $r$ if 
$a_\pi=0$. We need to show $\e_1=\e_\pi$ and \eqref{E:main eqn}. For this, it suffices to prove
\begin{equation}\label{E:main identity}
\Xi^{a_\pi}_{n,n}(v_\pi; X_1,X_2,\ldots,X_n)=1.
\end{equation}
Indeed, if \eqref{E:main identity} is valid, then \propref{P:main prop} $(1)$ and $(4)$ (with $m=a_\pi$) will give us 
\eqref{E:main eqn}. On the other hand, by \eqref{E:main identity} and \propref{P:main prop} $(2)$, we have 
$\Xi_{n,1}^{a_\pi}(v_\pi; X^{-1}_1)=\Xi_{n,1}^{a_\pi}(v_\pi; X_1)=1$ and hence by \propref{P:main prop} $(2)$
\[
\e_1
=\e_1\Xi_{n,1}^{a_\pi}(v_\pi; X^{-1}_1)
=\Xi_{n,1}^{a_\pi}(\pi(u_{n,1,a_\pi})v_\pi; X^{-1}_1)
=\e_\pi\Xi^{a_\pi}_{n,1}(v_\pi;X_1)
=\e_\pi.
\]
So the proof reduces to verifying \eqref{E:main identity}.\\

To prove \eqref{E:main identity}, we first claim that 
$\Xi^{a_\pi}_{n,n}(v_\pi; X_1, X_2, \ldots, X_n)$ is a constant. Let $Y_j$ be the $j$-th elementary symmetric 
polynomial in $X_1, X_2,\ldots, X_n$ for $1\le j\le n$. Then we have 
\[
\cS_n=\bbC[Y_1, Y_2, \ldots, Y_{n-1}, Y_n, Y_n^{-1}].
\] 
Note that $Y_n=X_1X_2\cdots X_n$ gives a $\bbZ$-grading of $\cS_n=\oplus_{\ell\in\bbZ}\cS_{n,\ell}$ by the degree 
of $Y_n$, i.e. $\cS_{n,\ell}=\bbC[Y_1, Y_2,\ldots, Y_{n-1}]\, Y_n^\ell$ for $\ell\in\bbZ$. Now by \lmref{L:supp for para vec}, 
\eqref{E:Psi_r,m,l}, \eqref{E:Xi_r,m,l} and the observation
\[
{\rm deg}_{Y_n}W(\varpi^\la;X_1,X_2,\ldots, X_n;\b{\psi})=\la_n
\]
where $\la=\la_1\e_1+\la_2\e_2+\cdots+\la_n\e_n\in P^+_{\GL_n}$, we find that
\begin{equation}\label{E:degree in Y_n}
\Xi^m_{n,n}(v;X_1, X_2,\ldots, X_n)\in\bigoplus_{\ell\ge 0}\cS_{n,\ell}
\end{equation}
for every $v\in\cV_\pi^{K_{n,m}}$ and $m\ge 0$. Then \propref{P:main prop} $(2)$ gives
\[
\e_n\Xi^{a_\pi}_{n,n}(v_\pi;X^{-1}, X_2^{-1},\ldots, X_n^{-1})
=
\Xi^{a_\pi}_{n,n}(\pi(u_{n,n,a_\pi})v_\pi;X^{-1}, X_2^{-1},\ldots, X_n^{-1})
=
\e_\pi^n
\Xi^{a_\pi}_{n,n}(v_\pi; X_1, X_2, \ldots, X_n).
\]
Combining this with \eqref{E:degree in Y_n} (with $m=a_\pi$), we see that 
$\Xi^{a_\pi}_{n,n}(v_\pi; X_1, X_2, \ldots, X_n)=c$ for some constant $c$.\\

To complete the proof, it remains to show $c=1$.  By \propref{P:main prop} $(1)$ and $(4)$, Remark \ref{R:zeta integral} 
and the normalization of the spherical Whittaker functions, we find that
\[
c
=
\Xi^{a_\pi}_{n,n}(v_\pi; q^{-s+\frac{1}{2}},\underbrace{0,0,\ldots,0}_{n-1})
=
\Xi^{a_\pi}_{n,1}(v_\pi; q^{-s+\frac{1}{2}})
=
\frac{Z(s,v_\pi)}{L(s,\phi_\pi)}
\]
which gives
\[
c\,L(s,\phi_\pi)
=
Z(s,v_\pi).
\]
By writing both sides as power series in $q^{-s}$, we see that the constant term in the LHS is just $c$. 
On the other hand, by \lmref{L:supp for para vec} and \lmref{L:exp of RS int},  the constant term in the RHS is 
$\Lambda_{\pi,\psi}(v_\pi)=1$. This finishes the proof.\qed

\subsection{Rankin-Selberg integrals attached to oldforms}\label{SS:RS int attached to oldform}
We prove \eqref{E:dim bd} in this subsection. The proof is achieved by computing the Rankin-Selberg integrals 
attached to the subsets $\cB_{\pi,m}$ defined in \S\ref{SS:conj basis}, which is also one of the main objectives of this 
work (cf. \thmref{T:main for oldform}). However, we should mention that our results for 
oldforms are less explicit, as they involve two positive constants $q_n$ and $q'_n$ 
\footnote{It seems that these constants only depend on $q$ and $n$.}
implicitly given by 
\begin{equation}\label{E:q_n}
Z(s,\theta(v_\pi))=q_nq^{-s+\frac{1}{2}}Z(s,v_\pi)
\quad\text{and}\quad
Z(s,\theta'(v_\pi))=q'_nZ(s,v_\pi)
\end{equation}
where $\theta=\theta_{a_\pi+1}, \theta'=\theta'_{a_\pi+1}$ are level raising operators defined in \S\ref{SSS:level raising}. 
This can be proved by following the arguments in \cite[Proposition 9.1.3]{Tsai2013} (without the Hypothesis \ref{H}). 
We should point out that $q_2=q'_2=q$ by a result of Roberts-Schmidt (\cite[Proposition 4.1.3]{RobertsSchmidt2007}). 
Here we will not try to compute these numbers explicitly (for general $n$), but only show that they are equal 
(under the hypothesis mentioned in the beginning of this section).\\

We start with two lemmas. 
 
\begin{lm}\label{L:eta action}
Let $v\in\cV_\pi^{R_{n,m}}$. We have 
\[
\Xi^{m+2}_{n,n}(\eta(v);X_1,X_2,\ldots, X_n)
=
q^{\frac{n(n-1)}{2}}(X_1X_2\cdots X_n)\Xi^m_{n,n}(v;X_1,X_2,\ldots, X_n).
\]
\end{lm}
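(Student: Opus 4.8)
The plan is to track the effect of $\eta = \pi(\varpi^{-\mu_n})$ directly on the homogeneous pieces $\Psi^m_{n,n,\ell}(v;X_1,\ldots,X_n)$ from \eqref{E:Psi_r,m,l}, and then transfer the resulting identity to $\Xi^m_{n,n}$ using that the normalizing factors $P_{\phi_\pi}(X;Y)$ and $P_{\bigwedge^2}(X;Y)$ in \eqref{E:Xi_r,m} depend on neither $v$ nor $m$. First I would record the shift identity on Whittaker functions: since $\eta(v) = \pi(\varpi^{-\mu_n})v$, we have $W_{\eta(v)}(g) = W_v(g\varpi^{-\mu_n})$; and under the embedding \eqref{E:embedding} with $r = n$ the element $m_n(\varpi^{\lambda})$ is the diagonal matrix $\varpi^{\lambda_1\epsilon_1+\cdots+\lambda_n\epsilon_n}$, so that $m_n(\varpi^{\lambda})\varpi^{-\mu_n} = m_n(\varpi^{\lambda-\mathbf{1}})$, where $\mathbf{1} = \e_1+\cdots+\e_n\in X_\bullet(A_n)$. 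Hence $W_{\eta(v)}(m_n(\varpi^{\lambda})) = W_v(m_n(\varpi^{\lambda-\mathbf{1}}))$. By \eqref{E:R*} we have $\eta(v)\in\cV_\pi^{R_{n,m+2}}$, so the left-hand side of the lemma is defined.

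Next I would substitute this into \eqref{E:Psi_r,m,l} for $\Psi^{m+2}_{n,n,\ell}(\eta(v);X_1,\ldots,X_n)$ and change variables $\lambda\mapsto\lambda':=\lambda-\mathbf{1}$, using four elementary observations: (i) $\lambda\mapsto\lambda-\mathbf{1}$ is a bijection of $P^+_{\GL_n}$ onto itself with ${\rm tr}(\lambda) = {\rm tr}(\lambda')+n$; (ii) $\delta_{B_n}(\varpi^{\mathbf{1}}) = 1$, because $\sum_{i=1}^n(n-2i+1) = 0$, whence $\delta_{B_n}^{1/2}(\varpi^{\lambda}) = \delta_{B_n}^{1/2}(\varpi^{\lambda'})$; (iii) the irreducible $\GL_n(\bbC)$-representation of highest weight $\lambda'+\mathbf{1}$ is the one of highest weight $\lambda'$ twisted by $\det$, so $\chi^{\GL_n}_{\lambda'+\mathbf{1}}(X_1,\ldots,X_n) = (X_1\cdots X_n)\chi^{\GL_n}_{\lambda'}(X_1,\ldots,X_n)$ and hence, by \eqref{E:Whittaker function for GL}, $W(\varpi^{\lambda};X_1,\ldots,X_n;\b{\psi}) = (X_1\cdots X_n)W(\varpi^{\lambda'};X_1,\ldots,X_n;\b{\psi})$; and (iv) $\nu_n(\varpi^{\lambda})^{\frac{n+1}{2}-n} = q^{\frac{(n-1){\rm tr}(\lambda)}{2}} = q^{\frac{n(n-1)}{2}}\,q^{\frac{(n-1){\rm tr}(\lambda')}{2}}$. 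Combining (i)--(iv) gives $\Psi^{m+2}_{n,n,\ell}(\eta(v);X) = q^{\frac{n(n-1)}{2}}(X_1\cdots X_n)\,\Psi^m_{n,n,\ell-n}(v;X)$ for every $\ell$; summing against $Y^\ell$ and invoking \eqref{E:Psi_r,m} yields
\[
\Psi^{m+2}_{n,n}(\eta(v);X_1,\ldots,X_n;Y) = q^{\frac{n(n-1)}{2}}(X_1\cdots X_n)\,Y^n\,\Psi^m_{n,n}(v;X_1,\ldots,X_n;Y).
\]

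Finally, multiplying both sides by $P_{\phi_\pi}(X;Y)/P_{\bigwedge^2}(X;Y)$ as in \eqref{E:Xi_r,m} — these factors being independent of $v$ and $m$ — gives the same identity with $\Psi^{\bullet}_{n,n}$ replaced by $\Xi^{\bullet}_{n,n}$; setting $Y = 1$ and recalling $\Xi^m_{n,n}(v;X) = \Xi^m_{n,n}(v;X;1)$ produces the stated formula. I expect the only real care to be in step one — identifying $\eta$ with right translation by $m_n(\varpi^{-1}I_n)$ and reading off the corresponding shift $\lambda\mapsto\lambda-\mathbf{1}$ of the summation index — together with the cancellation $\sum_{i=1}^n(n-2i+1) = 0$ that makes the modulus character drop out; everything else is bookkeeping with the formulas already assembled in \S\ref{S:key}.
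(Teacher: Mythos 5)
Your proof is correct and takes essentially the same route as the paper: both hinge on the substitution $a\mapsto a(\varpi I_n)$ (equivalently $\lambda\mapsto\lambda-\mathbf 1$) in \eqref{E:Psi_r,m,l}, the shift identity $W(a(\varpi I_n);X;\b\psi)=(X_1\cdots X_n)W(a;X;\b\psi)$, and the observation that the normalizing factors in \eqref{E:Xi_r,m} do not depend on $v$ or $m$. The only difference is expository: the paper quotes the Whittaker shift identity and the constant $q^{n(n-1)/2}$ without derivation, whereas your observations (ii)--(iv) supply those details explicitly.
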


\begin{proof}
Recall that $\eta=\pi(\varpi^{-\mu_n})$ and we have $\eta(v)\in\cV_\pi^{R_{n,m+2}}$ by \eqref{E:R*}. 
After changing the variable $a\mapsto a(\varpi I_n)$ in \eqref{E:Psi_r,m,l} and then using the fact 
\[
W(a(\varpi I_n);X_1,X_2,\ldots, X_n; \b{\psi})
=
(X_1X_2,\cdots X_n)
W(a;X_1,X_2,\ldots, X_n; \b{\psi})
\]
we find that
\[
\Psi^{m+2}_{n,n,\ell}(\eta(v);X_1,X_2,\ldots, X_n)
=
q^{\frac{n(n-1)}{2}}(X_1X_2\cdots X_n)\Psi^m_{n,n,\ell-n}(v;X_1,X_2,\ldots, X_n)
\]
for all $\ell\in\bbZ$. It follows that 
\[
\Psi^{m+2}_{n,n}(\eta(v);X_1,X_2,\ldots, X_n;Y)
=
q^{\frac{n(n-1)}{2}}(X_1X_2\cdots X_n)Y^n \Psi^m_{n,n}(v;X_1,X_2,\ldots, X_n; Y)
\]
and hence
\[
\Xi^{m+2}_{n,n}(\eta(v);X_1,X_2,\ldots, X_n;Y)
=
q^{\frac{n(n-1)}{2}}(X_1X_2\cdots X_n)Y^n \Xi^m_{n,n}(v;X_1,X_2,\ldots, X_n; Y)
\]
by \eqref{E:Xi_r,m}. Now the lemma follows from letting $Y=1$ in both sides.
\end{proof}

In the following lemma, for a given $1\le r\le n$, we denote by $Y_j(X_1, X_2,\ldots, X_r)$ the $j$-th elementary 
symmetric polynomial in $X_1, X_2,\ldots, X_r$ for $1\le j\le r$ and put $Y_0(X_1, X_2,\ldots, X_r)=1$. 
Then one has the relations 
\[
Y_j(X_1,X_2,\ldots, X_{r-1},0)=Y_j(X_1,X_2,\ldots, X_{r-1})
\quad\text{and}\quad
Y_r(X_1,X_2,\ldots, X_{r-1},0)=0
\]
for $0\le j\le r-1$.

\begin{lm}\label{L:level a+1}
We have $q_n=q_n'$ and
\[
\Xi^{a_\pi+1}_{n,n}(\theta(v_\pi);X_1, X_2,\ldots, X_n)
=
q_n\sum^n_{j=0}\left(\frac{1+(-1)^{j+1}}{2}\right)Y_j(X_1,X_2,\ldots,X_n)
\]
and 
\[
\Xi^{a_\pi+1}_{n,n}(\theta'(v_\pi);X_1, X_2,\ldots, X_n)
=
q'_n\sum^n_{j=0}\left(\frac{1+(-1)^{j}}{2}\right)Y_j(X_1,X_2,\ldots, X_n).
\]
\end{lm}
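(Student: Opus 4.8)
The plan is to determine the symmetric Laurent polynomials $A_n:=\Xi^{a_\pi+1}_{n,n}(\theta(v_\pi);X_1,\dots,X_n)$ and $B_n:=\Xi^{a_\pi+1}_{n,n}(\theta'(v_\pi);X_1,\dots,X_n)$ in $\cS_n$ by playing the functional equation \propref{P:main prop} $(2)$, taken at \emph{every} level $1\le r\le n$, against the restriction relation \propref{P:main prop} $(4)$, the degree bound \eqref{E:degree in Y_n}, and the value at $r=1$ coming from \eqref{E:q_n}. Write $A_r:=\Xi^{a_\pi+1}_{n,r}(\theta(v_\pi))$, $B_r:=\Xi^{a_\pi+1}_{n,r}(\theta'(v_\pi))\in\cS_r$, and abbreviate $X=(X_1,\dots,X_r)$, $X^{-1}=(X_1^{-1},\dots,X_r^{-1})$. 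Because $\theta(v_\pi),\theta'(v_\pi)\in\cV_\pi^{K_{n,a_\pi+1}}$, iterating \propref{P:main prop} $(4)$ gives $A_r=A_n(X_1,\dots,X_r,0,\dots,0)$ and similarly for $B_r$; in particular \eqref{E:degree in Y_n} forces all the $A_r,B_r$ to be genuine polynomials (no negative powers of any $X_i$).

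First I would record the base case $r=1$. With $\tau$ trivial the $\bigwedge^2$-factor is $1$ and $\phi_{J(\tau)}$ is the trivial character, so \propref{P:main prop} $(1)$ at $r=1$ together with Remark~\ref{R:zeta integral} identifies $\Xi^{m}_{n,1}(v;q^{-s+\frac{1}{2}})$ with $Z(s,v)/L(s,\phi_\pi)$ up to the fixed choice of measures; combined with $\Xi^{a_\pi}_{n,1}(v_\pi)=1$ (the instance of \eqref{E:main identity} obtained by putting $X_2=\dots=X_n=0$ and using \propref{P:main prop} $(4)$) and with \eqref{E:q_n}, this yields $A_1=q_nX_1$ and $B_1=q'_n$.

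Next I would determine the action of the Atkin--Lehner type elements $u_{n,r,a_\pi+1}$ on $\theta(v_\pi)$ and $\theta'(v_\pi)$. A direct computation shows that $t_m u_{n,r,m}t_m^{-1}$ is the matrix with $I_r$ in the top-right and bottom-left $r\times r$ corners and $(-1)^rI_{2(n-r)+1}$ in the central block, and that it maps to $(-1)^r$ under the composite $\tilde J_{n,m}\twoheadrightarrow {\rm O}_{2n}(\frak{f})\overset{\det}{\twoheadrightarrow}\{\pm1\}$ cutting out $K_{n,m}$; hence $u_{n,r,m}\in K_{n,m}$ when $r$ is even, while $u_{n,r,m}\equiv u_{n,1,m}\pmod{K_{n,m}}$ when $r$ is odd (for $r=n$ even this is simply $u_{n,n,m}=w_{n,m}\in R_{n,m}$). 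On the other hand, from the definition $\theta_{a_\pi}=\pi(u_{n,1,a_\pi+1})\circ\theta'_{a_\pi}\circ\pi(u_{n,1,a_\pi})$, the involutivity $u_{n,1,m}^2=I_{2n+1}$, and $\pi(u_{n,1,a_\pi})v_\pi=\e_\pi v_\pi$ (proved in \S\ref{SS:RS int attached to newform}), one gets $\pi(u_{n,1,a_\pi+1})\theta(v_\pi)=\e_\pi\theta'(v_\pi)$ and $\pi(u_{n,1,a_\pi+1})\theta'(v_\pi)=\e_\pi\theta(v_\pi)$. Substituting this into \propref{P:main prop} $(2)$ at level $r$ (with $a_\pi-(a_\pi+1)=-1$, and $\e_\pi^r=1$ for $r$ even, $\e_\pi^r=\e_\pi$ for $r$ odd) produces, for $r$ even, $A_r(X^{-1})=(X_1\cdots X_r)^{-1}A_r(X)$ and $B_r(X^{-1})=(X_1\cdots X_r)^{-1}B_r(X)$, and for $r$ odd, $A_r(X^{-1})=(X_1\cdots X_r)^{-1}B_r(X)$ and $B_r(X^{-1})=(X_1\cdots X_r)^{-1}A_r(X)$. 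Since $A_r,B_r$ are polynomials, each such identity forces the polynomial in question to be squarefree, hence --- being symmetric of total degree $\le r$ --- of the form $\sum_{d=0}^r\gamma^{(r)}_dY_d(X_1,\dots,X_r)$, resp. $\sum_{d=0}^r\delta^{(r)}_dY_d(X_1,\dots,X_r)$; using $(X_1\cdots X_r)\,Y_d(X^{-1})=Y_{r-d}(X)$ the identities become $\gamma^{(r)}_d=\gamma^{(r)}_{r-d}$ and $\delta^{(r)}_d=\delta^{(r)}_{r-d}$ when $r$ is even, and $\delta^{(r)}_d=\gamma^{(r)}_{r-d}$ when $r$ is odd.

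Finally I would run the bookkeeping. Setting the last variable to $0$ in \propref{P:main prop} $(4)$ shows the coefficients stabilize, $\gamma^{(r)}_d=\gamma_d$ and $\delta^{(r)}_d=\delta_d$ for all $r\ge d$, where $\gamma_d,\delta_d$ are the coefficients of $Y_d$ in $A_n,B_n$. The base case gives $\gamma_0=0$, $\gamma_1=q_n$, $\delta_0=q'_n$, $\delta_1=0$, and plugging $r=1$ into the odd relation gives $\delta_0=\gamma_1$, i.e. $q_n=q'_n$. An induction on $d$ then alternates the even relation (for $d$ even: $\gamma_d=\gamma_0=0$ and $\delta_d=\delta_0=q_n$) with the odd relation (for $d$ odd: $\gamma_d=\delta_0=q_n$ and $\delta_d=\gamma_0=0$), giving $\gamma_d=q_n\cdot\frac{1+(-1)^{d+1}}{2}$ and $\delta_d=q'_n\cdot\frac{1+(-1)^{d}}{2}$ for $0\le d\le n$; evaluating $A_n=\sum_d\gamma_dY_d$ and $B_n=\sum_d\delta_dY_d$ then yields exactly the two asserted formulas. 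I expect the main obstacle to be the determinant/coset computation in the third paragraph --- locating each $u_{n,r,m}$ inside or outside $K_{n,m}$ and relating the various $u_{n,r,m}$ to one another; once that is available, the rest is forced purely by the structural properties of the maps $\Xi^m_{n,r}$ already established in \propref{P:main prop}.
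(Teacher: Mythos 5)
Your proposal is correct, and it reproduces the lemma by a route that is close in spirit to the paper's but differs at two tactical points. The paper establishes that $u_{n,r,a_\pi+1}$ acts on $\cV_\pi^{K_{n,a_\pi+1}}$ by $\e^r$ (for an eigenvector $v$ of $\pi(u_{n,1,a_\pi+1})$ with eigenvalue $\e$) via the explicit matrix factorization $u_{n,r-1,m}u_{n,r,m}=m_{r-1}(\jmath_{r-1})m_r(s'_r)m_r(s_r)u_{n,1,m}m_r(s_r)^{-1}m_r(\jmath_r)$ together with $u_{n,r,m}^2=I$; you instead compute the image of $t_m u_{n,r,m}t_m^{-1}$ in $\tilde J_{n,m}\twoheadrightarrow{\rm O}_{2n}(\frak f)\overset{\det}{\to}\{\pm1\}$ and read off the coset $u_{n,r,m}K_{n,m}$ directly (indeed the reduced matrix has determinant $(-1)^r$), which gives the same conclusion more conceptually. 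The second difference is in how the two sides of the lemma are packaged: the paper works with the eigenvectors $v_\pm=\theta(v_\pi)\pm\theta'(v_\pi)$ of $\pi(u_{n,1,a_\pi+1})$, shows $\Xi^{a_\pi+1}_{n,n}(v_\pm)=b_0^\pm\sum(\pm1)^jY_j$, and then recombines, whereas you exploit directly the swap $\pi(u_{n,1,a_\pi+1})\theta(v_\pi)=\e_\pi\theta'(v_\pi)$, $\pi(u_{n,1,a_\pi+1})\theta'(v_\pi)=\e_\pi\theta(v_\pi)$ (which follows from $\theta_{a_\pi}=\pi(u_{n,1,a_\pi+1})\circ\theta'_{a_\pi}\circ\pi(u_{n,1,a_\pi})$, $\pi(u_{n,1,a_\pi})v_\pi=\e_\pi v_\pi$, and $u_{n,1,a_\pi+1}^2=I$), feeding it into the functional equation \propref{P:main prop}(2) at every level $r$ and obtaining the coefficient recursion on $\gamma_d,\delta_d$ directly. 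Both approaches use the same two structural inputs in the end --- the degree/squarefree constraint from \eqref{E:degree in Y_n} plus the functional equations, and the $r=1$ base case via \eqref{E:q_n} and \eqref{E:main identity} --- but your version avoids introducing the auxiliary vectors $v_\pm$ and so is marginally more streamlined, at the cost of the small determinant computation locating each $u_{n,r,m}$ mod $K_{n,m}$.
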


\begin{proof}
We first show that if $v\in\cV_\pi^{K_{n,a_\pi+1}}$ and $\pi(u_{n,1,a_\pi+1})v=\e v$ for some $\e\in\stt{\pm 1}$, then 
there exists $b_0\in\bbC$ such that 
\begin{equation}\label{E:Xi with level a+1}
\Xi^{a_\pi+1}_{n,n}(v;X_1,X_2,\ldots, X_n)
=
b_0\sum_{j=0}^n(\e\e_\pi)^j Y_j(X_1,X_2,\ldots, X_n).
\end{equation}
Note that the assumption $\pi(u_{n,1,a_\pi+1})v=\e v$ implies 
\begin{equation}\label{E:ev}
\pi(u_{n,r,a_\pi+1})v=\e ^r v
\end{equation}
for $1\le r\le n$, where $u_{n,r,a_\pi+1}\in J_{n,a_\pi+1}$ are the elements given by \eqref{E:u_r,m}.
Indeed, if we put
\[
s_r
=
\begin{pmatrix}
&I_{r-1}&\\
1&&\\
&&I_{n-r}
\end{pmatrix}
\quad\text{and}\quad
s'_r
=
\begin{pmatrix}
(-1)^{r-1}&\\
&-I_{r-1}&\\
&&I_{n-r}
\end{pmatrix}
\]
(in $\GL_n(\frak{o})$), then we have
\[
u_{n,r-1,m}u_{n,r,m}=m_{r-1}(\jmath_{r-1})m_r(s'_r)m_r(s_r)u_{n,1,m}m_r(s_r)^{-1}m_r(\jmath_{r})
\]
for $m\ge 0$. Since $m_r(s_r)$, $m_r(s'_r)$ $m_{r-1}(\jmath_{r-1})$ and $m_r(\jmath_r)$ are contained in $K_{n,m}$, 
we deduce that
\[
\pi(u_{n,r-1,a_\pi+1}u_{n,r,a_\pi+1})v=\e v.
\]
This implies \eqref{E:ev}.\\

Now by \eqref{E:degree in Y_n}, we may assume 
\begin{equation}\label{E:rought Xi for level a+1}
\Xi^{a_\pi+1}_{n,n}(v;X_1,X_2,\ldots, X_n)
=
\sum_{\ul{\ell}}
b_{\ul{\ell}}
Y_1(X_1,X_2,\ldots, X_n)^{\ell_1}Y_2(X_1,X_2,\ldots, X_n)^{\ell_2}\cdots Y_n(X_1,X_2,\ldots, X_n)^{\ell_n}
\end{equation}
for some $\ul{\ell}=(\ell_1,\ell_2,\ldots, \ell_n)\in\bbZ^n_{\ge 0}$ and $b_{\ul{\ell}}\in\bbC$ with $b_{\ul{\ell}}=0$ for 
almost all $\ul{\ell}$. Since 
\[
Y_j(X^{-1}_1,X_2^{-1}, \ldots, X_r^{-1})=Y_{r-j}(X_1,X_2,\ldots, X_r)Y_r(X_1,X_2,\ldots, X_r)^{-1}
\]
for $1\le j\le r-1$, the functional equation in \propref{P:main prop} $(2)$ and the relation \eqref{E:ev} give
\begin{align*}
\Xi^{a_\pi+1}_{n,n}&(v;X_1,X_2,\ldots, X_n)\\
&=
\sum_{\ul{\ell}}
(\e\e_\pi)^n b_{\ul{\ell}}
Y_1(X_1,X_2,\ldots, X_n)^{\ell_{n-1}}Y_2(X_1,X_2,\ldots, X_n)^{\ell_{n-2}}\cdots 
Y_n(X_1,X_2,\ldots, X_n)^{1-\ell_1-\ell_2-\cdots-\ell_n}.
\end{align*}
Since $1-\ell_1-\ell_2-\cdots-\ell_n\ge 0$, we find that $\ell_j\le 1$ for $1\le j\le n$. Hence 
\eqref{E:rought Xi for level a+1} becomes
\[
\Xi^{a_\pi+1}_{n,n}(v;X_1,X_2,\ldots, X_n)
=
\sum_{j=0}^n b_j Y_j(X_1,X_2,\ldots, X_n).
\]
To prove \eqref{E:Xi with level a+1}, it remains to show 
\begin{equation}\label{E:c_j for a+1}
b_j=b_0(\e\e_\pi)^j
\end{equation}
for $1\le j\le n$. For this, we apply \propref{P:main prop} (4) to get
\[
\Xi^{a_\pi+1}_{n,r}(v;X_1,X_2,\ldots, X_r)
=
\sum_{j=0}^r b_j Y_j(X_1,X_2,\ldots, X_r)
\]
for $1\le r\le n$. Then by the functional equation for $\Xi^{a_\pi+1}_{n,r}(v;X_1,X_2,\ldots, X_r)$ and \eqref{E:ev}, one 
obtains \eqref{E:c_j for a+1} (when $j=r$) after comparing the constant terms. This shows \eqref{E:Xi with level a+1}. \\

Now let us put 
\[
v_\pm
=
\theta(v_\pi)\pm\theta'(v_\pi).
\]
Then we have $\pi(u_{n,1,a_\pi+1})v_\pm=\pm\e_\pi v_\pm$ by the fact that 
$\pi(u_{n,1,a_\pi})v_\pi=\e_\pi v_\pi$ and the definitions of $\theta, \theta'$ (cf. \eqref{E:theta}). Therefore, there exist 
$b_0^\pm\in\bbC$ such that 
\[
\Xi^{a_\pi+1}_{n,n}(v_\pm;X_1,X_2,\ldots, X_n)
=
b^\pm_0\sum_{j=0}^n(\pm 1)^j Y_j(X_1,X_2,\ldots, X_n)
\]
by \eqref{E:Xi with level a+1}. We then deduce that
\begin{equation}\label{E:rought Xi theta}
\Xi^{a_\pi+1}_{n,n}(\theta(v_\pi);X_1,X_2,\ldots, X_n)
=
\sum_{j=0}^n\left(\frac{b^+_0+b_0^-(-1)^j}{2}\right)Y_j(X_1,X_2,\ldots, X_n)
\end{equation}
and 
\begin{equation}\label{E:rought Xi theta'}
\Xi^{a_\pi+1}_{n,n}(\theta'(v_\pi);X_1,X_2,\ldots, X_n)
=
\sum_{j=0}^n\left(\frac{b^+_0-b_0^-(-1)^j}{2}\right)Y_j(X_1,X_2,\ldots, X_n).
\end{equation}
To complete the proof, we need to solve $b_0^\pm$ and show that $q_n=q'_n$. 
By \propref{P:main prop} $(4)$, Remark \ref{R:zeta integral} and \eqref{E:q_n}, \eqref{E:main identity}, we first find that
\[
\Xi^{a_\pi+1}_{n,n}(\theta(v_\pi);q^{-s+\frac{1}{2}},\underbrace{0,\ldots, 0}_{n-1})
=
\Xi^{a_\pi+1}_{n,1}(\theta(v_\pi);q^{-s+\frac{1}{2}})
=
\frac{Z(s,\theta(v_\pi))}{L(s,\phi_\pi)}
=
q_nq^{-s+\frac{1}{2}}
\]
and  
\[
\Xi^{a_\pi+1}_{n,n}(\theta'(v_\pi);q^{-s+\frac{1}{2}},\underbrace{0,\ldots, 0}_{n-1})
=
\Xi^{a_\pi+1}_{n,1}(\theta'(v_\pi);q^{-s+\frac{1}{2}})
=
\frac{Z(s,\theta'(v_\pi))}{L(s,\phi_\pi)}
=
q'_n.
\]
On the other hand, \eqref{E:rought Xi theta} and \eqref{E:rought Xi theta'} imply 
\[
\Xi^{a_\pi+1}_{n,n}(\theta(v_\pi);q^{-s+\frac{1}{2}},\underbrace{0,\ldots, 0}_{n-1})
=
\left(\frac{b^+_0+b^-_0}{2}\right)+\left(\frac{b^+_0-b^-_0}{2}\right)q^{-s+\frac{1}{2}}
\]
and 
\[
\Xi^{a_\pi+1}_{n,n}(\theta'(v_\pi);q^{-s+\frac{1}{2}},\underbrace{0,\ldots, 0}_{n-1})
=
\left(\frac{b^+_0-b^-_0}{2}\right)+\left(\frac{b^+_0+b^-_0}{2}\right)q^{-s+\frac{1}{2}}.
\]
By comparing the coefficients, we obtain
\[
b^+_0-b_0^-
=
2q_n
=
2q'_n
\quad\text{and}\quad
b^+_0+b^-_0=0.
\]
It follows that $b_0^\pm=\pm q_n= \pm q'_n$ as desired.
\end{proof}

Let's summarize our computations in the following theorem.

\begin{thm}\label{T:main for oldform}
Let  $\cB_{\pi,m}$ be the subsets defined in \S\ref{SS:conj basis}. Then the Rankin-Selberg integrals 
$\Psi_{n,r}(v\ot\xi^m_{\tau,s})$ attached to $v\in\cB_{\pi,m}$ and $\xi^m_{\tau,s}$ can be computed by using 
\propref{P:main prop} $(1)$, $(4)$ and $(5)$ together with \lmref{L:eta action}, \lmref{L:level a+1} and 
\eqref{E:main identity}.
\end{thm}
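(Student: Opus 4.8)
The plan is to turn everything into a computation of the polynomials $\Xi^m_{n,n}$ and then to reduce that, via the level-raising operators, to the single value $\Xi^{a_\pi}_{n,n}(v_\pi;X_1,\ldots,X_n)=1$ supplied by \eqref{E:main identity}. First, combining \propref{P:main prop} $(1)$ with the iterated relation \propref{P:main prop} $(4)$ gives, for every $1\le r\le n$ and every $v\in\cV_\pi^{K_{n,m}}$,
\[
\Psi_{n,r}(v\ot\xi^m_{\tau,s})
=
\frac{L(s,\phi_\pi\ot\phi_{J(\tau)})}{L(2s,\phi_{J(\tau)},\bigwedge^2)}\,
\Xi^m_{n,n}\bigl(v;q^{-s+\frac12}\alpha_1,\ldots,q^{-s+\frac12}\alpha_r,\underbrace{0,\ldots,0}_{n-r}\bigr).
\]
Since $\Xi^m_{n,n}$ is linear and $\cB_{\pi,m}$ is a finite set of explicit vectors built from the newform $v_\pi$ by level-raising, it then suffices to evaluate $\Xi^m_{n,n}$ on each generator of $\cB_{\pi,m}$.

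When $m\equiv a_\pi\pmod 2$ a generator is $\eta_{\lambda,a_\pi,m}(v_\pi)$ with $\lambda\in P^+_{H_n}$ and $2\|\lambda\|\le m-a_\pi$. Writing $\eta_{\lambda,a_\pi,m}=\eta^{\frac{m-a_\pi}{2}}\circ\varphi_{\lambda,a_\pi}$ as in \S\ref{SSS:level raising}, one first applies \propref{P:main prop} $(5)$ to the vector $\varphi_{\lambda,a_\pi}\star v_\pi\in\cV_\pi^{R_{n,a_\pi}}$ and uses \eqref{E:main identity} to get $\Xi^{a_\pi}_{n,n}(\varphi_{\lambda,a_\pi}\star v_\pi;X_1,\ldots,X_n)=\sS^0_{n,a_\pi}(\varphi_{\lambda,a_\pi})(X_1,\ldots,X_n)$, and then applies \lmref{L:eta action} a total of $\tfrac{m-a_\pi}{2}$ times to obtain
\[
\Xi^m_{n,n}(\eta_{\lambda,a_\pi,m}(v_\pi);X_1,\ldots,X_n)
=
q^{\frac{(m-a_\pi)n(n-1)}{4}}(X_1\cdots X_n)^{\frac{m-a_\pi}{2}}\,
\sS^0_{n,a_\pi}(\varphi_{\lambda,a_\pi})(X_1,\ldots,X_n).
\]
When $m\not\equiv a_\pi\pmod 2$ a generator is $\eta^\square_{\lambda,a_\pi+1,m}\circ\theta(v_\pi)$ or $\eta^\square_{\lambda,a_\pi+1,m}\circ\theta'(v_\pi)$ with $\lambda\in P^+_{G_n}$ and $2\|\lambda\|\le m-a_\pi-1$; since $\eta^\square_{\lambda,a_\pi+1,m}=\eta_{\lambda,a_\pi+1,m}+\eta_{\t{\lambda},a_\pi+1,m}$, $\|\t{\lambda}\|=\|\lambda\|$ and $\t{\lambda}\in P^+_{H_n}$, the same recipe applied to $\theta(v_\pi),\theta'(v_\pi)\in\cV_\pi^{K_{n,a_\pi+1}}$ in place of $v_\pi$ — using now the explicit values of $\Xi^{a_\pi+1}_{n,n}(\theta(v_\pi);\cdot)$ and $\Xi^{a_\pi+1}_{n,n}(\theta'(v_\pi);\cdot)$ from \lmref{L:level a+1} (and the equality $q_n=q'_n$ established there) — yields a closed formula by linearity. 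Substituting these $\Xi^m_{n,n}$-values into the first display then produces $\Psi_{n,r}(v\ot\xi^m_{\tau,s})$ for all $v\in\cB_{\pi,m}$ and $1\le r\le n$.

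The step requiring the most care is purely organizational: one must distinguish the version of $\Xi^m_{n,r}$ that lives on $\cV_\pi^{K_{n,m}}$ (used when $r<n$) from the one on $\cV_\pi^{R_{n,m}}$ (used when $r=n$), and check that each intermediate vector — $\varphi_{\lambda,a_\pi}\star v_\pi$ and its images under the powers of $\eta$ — indeed lies in the precise fixed-vector subspace on which the invoked identity is valid. This is guaranteed by \lmref{L:level raising for K*}, \eqref{E:R* fix space} and the inclusion $\cV_\pi^{K_{n,m}}\subseteq\cV_\pi^{R_{n,m}}$, so no essentially new difficulty arises; the theorem is an assembly of \propref{P:main prop}, \lmref{L:eta action}, \lmref{L:level a+1} and \eqref{E:main identity}. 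As a by-product, the explicit formulas obtained, combined with \propref{P:main prop} $(3)$ and the linear independence of the resulting symmetric polynomials, are precisely what is needed to deduce the dimension bound \eqref{E:dim bd}.
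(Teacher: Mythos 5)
The theorem is stated in the paper without an explicit proof (it is introduced with ``Let's summarize our computations in the following theorem''), and your write-up supplies exactly the assembly the author intends. Your opening display $\Psi_{n,r}(v\ot\xi^m_{\tau,s})=\frac{L(s,\phi_\pi\ot\phi_{J(\tau)})}{L(2s,\phi_{J(\tau)},\bigwedge^2)}\Xi^m_{n,n}(v;q^{-s+1/2}\alpha_1,\ldots,q^{-s+1/2}\alpha_r,0,\ldots,0)$ is the correct combination of Proposition~\parref{P:main prop}~$(1)$ and the iterated $(4)$ for $v\in\cV_\pi^{K_{n,m}}$; the decomposition $\eta_{\lambda,a_\pi,m}=\eta^{(m-a_\pi)/2}\circ\varphi_{\lambda,a_\pi}$ followed by $(5)$, \lmref{L:eta action}, and \eqref{E:main identity} is right, as is the opposite-parity case via $\theta,\theta'$, $\|\t\lambda\|=\|\lambda\|$, $\t\lambda\in P^+_{H_n}$, and \lmref{L:level a+1}. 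Your bookkeeping about which $\Xi^m_{n,r}$ is defined on $\cV_\pi^{K_{n,m}}$ versus $\cV_\pi^{R_{n,m}}$ and why the intermediate vectors land in the correct fixed subspaces (via \lmref{L:level raising for K*} and \eqref{E:R* fix space}) is precisely the one caveat worth raising; you handle it correctly. The proof is correct and is the same route the paper takes.
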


Now we can prove \eqref{E:dim bd}.

\subsubsection*{Proof of \eqref{E:dim bd}}
We are going to show that the subsets $\cB_{\pi,m}$ are linearly independent. Then since the cardinality of $\cB_{\pi,m}$
is given by the RHS of \eqref{E:dim bd}, the proof follows. The case when $m$ and $a_\pi$ have the same parity is much 
easier and is simply a consequence of \propref{P:main prop} $(5)$, \lmref{L:eta action} and \eqref{E:main identity}. 
Suppose that $m$ and $a_\pi$ have the opposite parity. We first claim 
\[
\sS^0_{n,m}(\varphi_{\t{\la},m})(X_1,X_2,\ldots,X_{n-1}, X_n)
=
\sS^0_{n,m}(\varphi_{\la,m})(X_1, X_2,\ldots, X_{n-1},X_n^{-1})
\]
for $\la\in X_\bullet(T_n)$. Recall that $\t{\la}\in X_\bullet(T_n)$ is defied in \S\ref{SS:conj basis}. For this, let 
\[
u_m=m_{n-1}(\jmath_{n-1})u_{n,n-1,m}u_{n,n,m}m_n(\jmath_n)\in J_{n,m}.
\]
Then we have $u_{n,m}^{-1}R_{n,m}u_m=R_{n,m}$ by \lmref{L:general AL elt} and the fact that both $m_n(\jmath_n)$ and 
$m_{n-1}(\jmath_{n-1})$ are contained in $R_{n,m}$. It follows that $\varphi^{u_m}_{\la,m}=\varphi_{\t{\la},m}$, where
$\varphi^{u_m}_{\la,m}\in\cH(H_n(F)//R_{n,m})$ is defined by $\varphi^{u_m}_{\pi,m}(h)=\varphi_{\la,m}(u_m^{-1}hu_m)$.   
Since (using \eqref{E:satake for H}) 
\[
\varsigma^0_{n,m}(\varphi_{\t{\la},m})(t)
=\varsigma^0_{n,m}(\varphi^{u_m}_{\la,m})(t)
=\varsigma^0_{n,m}(\varphi_{\la,m})(u_m^{-1}t u_m)
\]
for $t\in T_n(F)$, the claim follows.\\

Now suppose that 
\[
\sum_{\la\in P^+_{G_n}}
c_\la \eta^\square_{\la,a_\pi+1,m}\circ \theta(v_\pi)
=
\sum_{\la\in P^+_{G_n}}
c'_\la \eta^\square_{\la,a_\pi+1,m}\circ \theta'(v_\pi)
\]
for some $c_\la, c'_\la\in\bbC$. Then by \propref{P:main prop} $(5)$ and \lmref{L:eta action}, we obtain
\begin{align}\label{E:LI}
\begin{split}
\sum_{\la\in P^+_{G_n}}&c_\la f_\la(X_1,X_2,\ldots, X_n)\Xi^{a_\pi+1}_{n,n}(\theta(v_\pi); X_1,X_2,\ldots, X_n)\\
&=
\sum_{\la\in P^+_{G_n}}c'_\la f_\la(X_1,X_2,\ldots, X_n)\Xi^{a_\pi+1}_{n,n}(\theta'(v_\pi);X_1,X_2,\ldots, X_n)
\end{split}
\end{align}
after removing some nonzero functions from both sides, where we put
\[
f_\la(X_1,X_2,\ldots, X_n)
:=
\sS^0_{n,a_\pi+1}(\varphi_{\la,a_\pi+1})(X_1,X_2,\ldots, X_n)
+
\sS^0_{n,a_\pi+1}(\varphi_{\t{\la},a_\pi+1})(X_1,X_2,\ldots, X_n)
\]
for $\la\in P^+_{G_n}$. From the claim in the previous paragraph, we know that 
\[
f_\la(X_1,X_2,\ldots,X^{-1}_n)=f_\la(X_1,X_2,\ldots, X_n).
\] 
On the other hand, by \lmref{L:level a+1}, we have
\[
\Xi^{a_\pi+1}_{n,n}(\theta(v_\pi); X_1,X_2,\ldots, X^{-1}_n)
=
X_n^{-1}\,\Xi^{a_\pi+1}_{n,n}(\theta'(v_\pi); X_1,X_2,\ldots, X_n)
\]
and 
\[
\Xi^{a_\pi+1}_{n,n}(\theta'(v_\pi); X_1,X_2,\ldots, X^{-1}_n)
=
X_n^{-1}\,\Xi^{a_\pi+1}_{n,n}(\theta(v_\pi); X_1,X_2,\ldots, X_n).
\]
Thus if we replace $X_n$ with $X_n^{-1}$ in \eqref{E:LI}, we get 
\begin{align}\label{E:LI2}
\begin{split}
\sum_{\la\in P^+_{G_n}}&c_\la f_\la(X_1,X_2,\ldots, X_n)\Xi^{a_\pi+1}_{n,n}(\theta'(v_\pi); X_1,X_2,\ldots, X_n)\\
&=
\sum_{\la\in P^+_{G_n}}c'_\la f_\la(X_1,X_2,\ldots,X_n)\Xi^{a_\pi+1}_{n,n}(\theta(v_\pi); X_1,X_2,\ldots, X_n)
\end{split}
\end{align}
after dividing $X^{-1}_n$ from both sides. Since 
\[
\Xi^{a_\pi+1}_{n,n}(\theta(v_\pi); X_1,X_2,\ldots, X_n)
\pm
\Xi^{a_\pi+1}_{n,n}(\theta'(v_\pi); X_1,X_2,\ldots, X_n)
\]
are nonzero (again by \lmref{L:level a+1}), we can use \eqref{E:LI} and \eqref{E:LI2} to deduce
\[
\sum_{\la\in P^+_{G_n}}c_\la f_\la(X_1,X_2,\ldots, X_n)
=
\sum_{\la\in P^+_{G_n}}c'_\la f_\la(X_1,X_2,\ldots, X_n)
=
0.
\]
This implies $c_\la=c'_\la=0$ for all $\la\in P^+_{G_n}$ and hence concludes the proof.\qed\\

This also finishes the proof of \thmref{T:main}.\qed

\subsection{Case when $n=r=2$}\label{SS:n=r=2}
This subsection is devoted to prove \thmref{T:main'}. Here we don't need the hypothesis mentioned in the beginning of 
this section. We begin with recalling the accidental isomorphism $\vartheta: \SO_5\cong{\rm PGSp}_4$ and some 
formulae for the spherical Whittaker function of $\tau$.

\subsubsection{Accidental isomorphism}
Let $\bbF$ be a field with characteristic different from $2$. Let $(W,\langle,\rangle)$ be the $4$-dimensional symplectic 
space over $\bbF$. Fix an ordered basis $\stt{w_1, w_2, w^*_2, w^*_1}$ of $W$ so that the associated Gram matrix is 
given by
\[
\pMX{}{\jmath_2}{-\jmath_2}{}.
\]
Let $(\t{V}, (,))$ be the $6$-dimensional quadratic space over $\bbF$ with $\t{V}=\bigwedge^2 W$ and the symmetric 
bilinear form $(,)$ defined by 
\[
v_1\wedge v_2=(v_1,v_2)(w_1\wedge w_2\wedge w^*_1\wedge w^*_2)
\]
for $v_1,v_2\in\t{V}$. Let $\t{v}=w_1\wedge w^*_1+w_2\wedge w^*_2\in\t{V}$ and put 
\[
V
=
\stt{v\in\t{V}\mid (v,\t{v})=0}.
\]
Denote also by $(,)$ the restriction of the symmetric bilinear form to $V$ so that $(V,(,))$ becomes a $5$-dimensional 
quadratic space over $\bbF$. Its Gram matrix associated to the ordered basis $\stt{e_1, e_2, v_0, f_2, f_1}$ 
is given by \eqref{E:S} with $n=2$, where
\[
e_1=w_1\wedge w_2,\,\, e_2=w_1\wedge w^*_2,\,\ v_0=w_1\wedge w^*_1-w_2\wedge w_2^*,\,\,f_2=w_2\wedge w_1^*,
\,\,f_1=w_1^*\wedge w_2^*.
\]
Let $\t{\vartheta}:{\rm GSp}(W)\to\SO(\t{V})$ be the homomorphism defined by 
\[
\t{\vartheta}(h)=\mu(h)^{-1}\Wedge^2 h
\]
for $h\in {\rm GSp}(W)$. Here $\mu:{\rm GSp}(W)\to\bbF^\x$ is the similitude character. Then because 
$\t{\vartheta}(h)\t{v}=\t{v}$, the homomorphism induces an exact sequence 
\[
1\longto\bbF^\x\overset{\iota}{\longto}{\rm GSp}(W)\overset{\vartheta}{\longto}\SO(V)\longto 1
\]
where $\iota(a)=aI_{W}$ with $I_W:W\to W$ the identity map. We continue to use $\vartheta$ to denote the isomorphism
$\vartheta:{\rm PGSp}(W)\overset{\sim}{\to}\SO(V)$ induced from the exact sequence. Using the ordered bases
$\stt{w_1,w_2, w_2^*, w_1^*}$ and $\stt{e_1,e_2,v_0,f_2, f_1}$, we can identify (when $\bbF=F$)
${\rm PGSp}(W)\cong{\rm PGSp}_4(F)$ and $\SO(V)\cong\SO_5(F)$. Then under $\vartheta$, the paramodular 
subgroups $K(\frak{p}^m)$ defined in \cite{RobertsSchmidt2007} map onto $K_{2,m}$ defined in 
\S\ref{SS:paramodular subgroup}. As a consequence, we can transfer results of 
Roberts-Schmidt to that of $\SO_5(F)$ via $\vartheta$.

\subsubsection{Some formulae}
For a given $\la\in X_\bullet(A_2)$, we denote $\la=\la_1\e_1+\la_2\e_2$.
Then we have 
\begin{equation}\label{E:Whittaker for GL_2}
W(\varpi^\la;\a_1,\a_2;\b{\psi})
=
q^{-\frac{1}{2}(\la_1-\la_2)}\sum_{j=0}^{\la_1-\la_2}\a_1^{\la_2+j}\a_2^{\la_1-j}
\end{equation}
if $\la_1\ge \la_2$, and $0$ otherwise (\cite[Section 2.4]{Schmidt2002}, \cite{Shintani1976}).
On the other hand, for a given $y\in F$, let us define $z_2(y), n_2(y)\in \SO_5(F)$ to be
\[
z_2(y)
=
\begin{pmatrix}
1&y&\\
&1&\\
&&1&\\
&&&1&-y\\
&&&&1
\end{pmatrix}
\quad\text{and}\quad
n_2(y)
=
\begin{pmatrix}
1&&&-y&\\
&1&&&y\\
&&1\\
&&&1\\
&&&&1
\end{pmatrix}.
\]
Then via $\vartheta$, equations in \cite[Lemma 3.2.2]{RobertsSchmidt2007} become
\begin{equation}\label{E:exp theta}
\theta(v)
=
\pi(m_2(\varpi^{-\e_1}))v
+
\sum_{c\in\frak{f}}
\pi(m_2(\varpi^{-\e_2})z_2(c\varpi^{-1}))v
\end{equation}
and
\begin{equation}\label{E:exp theta'}
\theta'(v)
=
\pi(m_2(\varpi^{-\e_1-\e_2}))v
+
\sum_{c\in\frak{f}}
\pi(n_2(c\varpi^{-m-1}))v
\end{equation}
for $v\in\cV_\pi^{K_{2,m}}$ with $m\ge 0$. 

\subsubsection{Proof of \eqref{E:raising theta}}
Let us write $\theta(v)=v_1+v_2$ with $v_1=\pi(m_2(\varpi^{-\e_1}))v$.  By \lmref{L:supp for para vec},
\lmref{L:exp of RS int} and the fact $R_{2,m}\cap M_2(F)=M_2(\frak{o})$, we get 
\begin{align*}
\Psi_{2,2}(\theta(v)\ot\xi^{m+1}_{\tau,s})
=&
\sum_{\la_1\ge \la_2\ge 0} 
W_{\theta(v)}(m_2(\varpi^{\la_1\e_1+\la_2\e_2}))
W(\varpi^{\la_1\e_1+\la_2\e_2};\a_1,\a_2;\b{\psi})q^{-(\la_1+\la_2)s+2\la_1}\\
=&
\sum_{\la_1\ge \la_2\ge 0} 
W_v(m_2(\varpi^{(\la_1-1)\e_1+\la_2\e_2}))W(\varpi^{\la_1\e_1+\la_2\e_2};\a_1,\a_2;\b{\psi})q^{-(\la_1+\la_2)s+2\la_1}\\
&+
\sum_{\la_1\ge \la_2\ge 0} 
W_{v_2}(m_2(\varpi^{\la_1\e_1+\la_2\e_2}))W(\varpi^{\la_1\e_1+\la_2\e_2};\a_1,\a_2;\b{\psi})
q^{-(\la_1+\la_2)s+2\la_1}
\end{align*}
by the Iwasawa decomposition of $\GL_2(F)$. Since $\ker(\psi)=\frak{o}$, we have
\begin{align*}
W_{v_2}(m_2(\varpi^{\la_1\e_1+\la_2\e_2}))
&=
\sum_{c\in\frak{f}}
W_v(m_2(\varpi^{\la_1\e_1+(\la_2-1)\e_2})z_2(c\varpi^{-1}))\\
&=
\sum_{c\in\frak{f}}
W_v(z_2(c\varpi^{\la_1-\la_2})m_2(\varpi^{\la_1\e_1+(\la_2-1)\e_2}))\\
&=
q W_v(m_2(\varpi^{\la_1\e_1+(\la_2-1)\e_2}))
\end{align*}
for $\la_1\ge \la_2$. On the other hand, a direct computation (using \eqref{E:Whittaker for GL_2}) shows
\[
W(\varpi^{(\la_1+1)\e_1+\la_2\e_2};\a_1,\a_2;\b{\psi})
+
q^{-1}W(\varpi^{\la_1\e_1+(\la_2+1)\e_2};\a_1,\a_2;\b{\psi})
=
q^{-\frac{1}{2}}(\a_1+\a_2)W(\varpi^{\la_1\e_1+\la_2\e_2};\a_1,\a_2;\b{\psi})
\]
again for $\la_1\ge \la_2$. Combining these, we find that 
\begin{align*}
\Psi_{2,2}(\theta(v)\ot\xi^{m+1}_{\tau,s})
=&
\sum_{\la_1\ge \la_2\ge 0} 
W_v(m_2(\varpi^{(\la_1-1)\e_1+\la_2\e_2}))W(\varpi^{\la_1\e_1+\la_2\e_2};\a_1,\a_2;\b{\psi})q^{-(\la_1+\la_2)s+2\la_1}\\
&+
\sum_{\la_1\ge \la_2\ge 0} 
W_{v}(m_2(\varpi^{\la_1\e_1+(\la_2-1)\e_2}))W(\varpi^{\la_1\e_1+\la_2\e_2};\a_1,\a_2;\b{\psi})
q^{-(\la_1+\la_2)s+2\la_1+1}\\
=&
\sum_{\la_1\ge \la_2\ge 0} 
W_v(m_2(\varpi^{\la_1\e_1+\la_2\e_2}))W(\varpi^{(\la_1+1)\e_1+\la_2\e_2};\a_1,\a_2;\b{\psi})
q^{-(\la_1+\la_2+1)s+2(\la_1+1)}\\
&+
\sum_{\la_1\ge \la_2\ge 0} 
W_{v}(m_2(\varpi^{\la_1\e_1+\la_2\e_2}))W(\varpi^{\la_1\e_1+(\la_2+1)\e_2};\a_1,\a_2;\b{\psi})
q^{-(\la_1+\la_2+1)s+2\la_1+1}\\
=&
q^{-s+\frac{3}{2}}(\a_1+\a_2)\sum_{\la_1\ge \la_2\ge 0} 
W_v(m_2(\varpi^{\la_1\e_1+\la_2\e_2}))W(\varpi^{\la_1\e_1+\la_2\e_2};\a_1,\a_2;\b{\psi})q^{-(\la_1+\la_2)s+2\la_1}\\
=&
q^{-s+\frac{3}{2}}(\a_1+\a_2)\Psi_{2,2}(v\ot\xi^m_{\tau,s}).
\end{align*}
This proves \eqref{E:raising theta}.\qed

\subsubsection{Proof of \eqref{E:raising theta'}}
The computations are similar to the previous one. We denote $\theta'(v)=v'_1+v'_2$ with 
$v_1'=\pi(m_2(\varpi^{-\e_1-\e_2}))v$.  Then we have
\begin{align*}
\Psi_{2,2}(\theta'(v)\ot\xi^{m+1}_{\tau,s})
=&
\sum_{\la_1\ge \la_2\ge 0} 
W_{\theta'(v)}(m_2(\varpi^{\la_1\e_1+\la_2\e_2}))
W(\varpi^{\la_1\e_1+\la_2\e_2};\a_1,\a_2;\b{\psi})q^{-(\la_1+\la_2)s+2\la_1}\\
=&
\sum_{\la_1\ge \la_2\ge 0} 
W_v(m_2(\varpi^{(\la_1-1)\e_1+(\la_2-1)\e_2}))W(\varpi^{\la_1\e_1+\la_2\e_2};\a_1,\a_2;\b{\psi})
q^{-(\la_1+\la_2)s+2\la_1}\\
&+
\sum_{\la_1\ge \la_2\ge 0} 
W_{v'_2}(m_2(\varpi^{\la_1\e_1+\la_2\e_2}))W(\varpi^{\la_1\e_1+\la_2\e_2};\a_1,\a_2;\b{\psi})
q^{-(\la_1+\la_2)s+2\la_1}
\end{align*}
where
\begin{align*}
W_{v'_2}(m_2(\varpi^{\la_1\e_1+\la_2\e_2}))
&=
\sum_{c\in\frak{f}}
W_v(m_2(\varpi^{\la_1\e_1+\la_2\e_2})n_2(c\varpi^{-m-1}))\\
&=
\sum_{c\in\frak{f}}
W_v(n_2(c\varpi^{\la_1+\la_2-m-1})m_2(\varpi^{\la_1\e_1+\la_2\e_2}))\\
&=
q W_v(m_2(\varpi^{\la_1\e_1+(\la_2-1)\e_2})).
\end{align*}
It follows that 
\begin{align*}
\Psi_{2,2}(\theta'(v)\ot\xi^{m+1}_{\tau,s})
=&
\sum_{\la_1\ge \la_2\ge 0} 
W_v(m_2(\varpi^{(\la_1-1)\e_1+(\la_2-1)\e_2}))W(\varpi^{\la_1\e_1+\la_2\e_2};\a_1,\a_2;\b{\psi})
q^{-(\la_1+\la_2)s+2\la_1}\\
&+
\sum_{\la_1\ge \la_2\ge 0} 
W_{v}(m_2(\varpi^{\la_1\e_1+\la_2\e_2}))W(\varpi^{\la_1\e_1+\la_2\e_2};\a_1,\a_2;\b{\psi})
q^{-(\la_1+\la_2)s+2\la_1+1}\\
=&
\sum_{\la_1\ge \la_2\ge 0} 
W_v(m_2(\varpi^{\la_1\e_1+\la_2\e_2}))W(\varpi^{(\la_1+1)\e_1+(\la_2+1)\e_2};\a_1,\a_2;\b{\psi})
q^{-(\la_1+\la_2+2)s+2(\la_1+1)}\\
&+
\sum_{\la_1\ge \la_2\ge 0} 
W_{v}(m_2(\varpi^{\la_1\e_1+\la_2\e_2}))W(\varpi^{\la_1\e_1+\la_2\e_2};\a_1,\a_2;\b{\psi})
q^{-(\la_1+\la_2)s+2\la_1+1}\\
=&
q(1+q^{-2s+1}\a_1\a_2)\Psi_{2,2}(v\ot\xi^m_{\tau,s}).
\end{align*}
This proves \eqref{E:raising theta'}.\qed

\subsubsection{Proof of \eqref{E:raising eta}}
This follows immediately from \lmref{L:eta action} and \propref{P:main prop} $(1)$.\qed\\ 

This also complete the proof of \thmref{T:main'}.\qed

\begin{remark}
Observe that \eqref{E:raising theta} and \eqref{E:raising theta'} imply
\begin{equation}\label{E:raising theta formal}
\Xi^{m+1}_{2,2}(\theta(v);X_1,X_2)
=
q(X_1+X_2)\Xi^m_{2,2}(v;X_1,X_2)
\end{equation}
and
\begin{equation}\label{E:raising theta' formal}
\Xi^{m+1}_{2,2}(\theta'(v);X_1,X_2)
=
q(1+X_1X_2)\Xi^{m}_{2,2}(v;X_1,X_2).
\end{equation}
In particular, these (when $m=a_\pi$) agree with \lmref{L:level a+1} by \eqref{E:main identity}.
\end{remark}

\subsection{Comparisons}\label{SSS:concluding remark}
We end this paper by comparing our $\cB_{\pi,m}$ in \S\ref{SS:conj basis} with that of Tasi, Casselman 
and Roberts-Schmidt. As already mentioned, when $m$ and $a_\pi$ have the opposite parity, 
our $\cB_{\pi,m}$ are slightly different from that of Tsai's. The subsets $\cB'_{\pi,m}$ appeared implicitly in 
\cite[Proposition 9.1.7]{Tsai2013} are
\[
\cB'_{\pi,m}=\stt{\eta_{\la,a_\pi+1,m}\circ\theta(v_\pi),\,\,\eta_{\lambda, a_\pi+1,m}\circ\theta'(v_\pi)\mid 
\lambda\in P^+_{G_n},\,\,2\|\lambda\|\leq m-a_\pi-1}.
\]
However, these subsets are not necessarily linearly independent as we will give a counterexample. As expected, we 
consider the case when $n=2$. In particular, the hypothesis mentioned in the beginning of this section is satisfied.\\

For a given $\la\in P^+_{\SO_4}$, let $(\sigma_\la, \cV_\la)$ be the irreducible 
representation of $\SO_4(\bbC)$ with highest weight $\la$. Denote $\chi_\la$ to be its character. 
Then $\sigma_{\epsilon_1}$ is the standard $4$-dimensional representation of $\SO_4(\bbC)$ and we have
$\bigwedge^2\sigma_{\epsilon_1}=\sigma_{\epsilon_1+\epsilon_2}\oplus\sigma_{\epsilon_1-\epsilon_2}$ 
(\cite[Theorem 5.5.13]{GoodmanWallach2009}). One checks that 
\begin{equation}\label{E:character}
\chi_{\epsilon_1}(t)=(t_1+t_2+t_1^{-1}+t_2^{-1}),
\quad
\chi_{\epsilon_1+\epsilon_2}(t)=(t_1t_2+1+t^{-1}_1t^{-1}_2),
\quad
\chi_{\epsilon_1-\epsilon_2}(t)=(t_1t^{-1}_2+1+t^{-1}_1t_2)
\end{equation}
where
\[
t
=
\begin{pmatrix}
t_1&&&\\
&t_2&&\\
&&t_2^{-1}&\\
&&&t_1^{-1}
\end{pmatrix}\in T_2(\bbC).
\]
Since $\epsilon_1$ and $\epsilon_1\pm\epsilon_2$ are minuscule weights of $\SO_4(\bbC)$, we find that 
\begin{equation}\label{E:epsilon_1}
\sS^0_{2,m}(\varphi_{\epsilon_1,m})(X_1,X_2)=q(X_1+X_2+X_1^{-1}+X_2^{-1})
\end{equation}
and
\begin{equation}\label{E:epsilon_1+2}
\sS^0_{2,m}(\varphi_{\epsilon_1+\epsilon_2},m)(X_1,X_2)=q(X_1X_2+1+X^{-1}_1X^{-1}_2)
\end{equation}
and
\begin{equation}\label{E:epsilon_1-2}
\sS^0_{2,m}(\varphi_{\epsilon_1-\epsilon_2},m)(X_1,X_2)=q(X_1X^{-1}_2+1+X^{-1}_1X_2)
\end{equation}
by \eqref{E:character} and \cite[(3.13)]{Gross1998}.\\ 

We show that $\cB'_{\pi,a_\pi+3}$ is not linearly independent when $n=2$. For this we first note that the linear maps 
$\Xi^m_{2,2}$ constructed in \propref{P:main prop} are injective by \cite[Corollary 4.3.8]{RobertsSchmidt2007},  
\eqref{E:K*}, \eqref{E:R* fix space} and \propref{P:main prop} $(3)$. Form the equations above and 
\eqref{E:raising theta formal}, \eqref{E:raising theta' formal}, \eqref{E:main identity} together with \lmref{L:eta action}, 
we find that 
\[
\Xi^{a_\pi+3}_{2,2}(\eta_{\epsilon_1,a_\pi+1,a_\pi+3}\circ\theta'(v_\pi);X_1,X_2)
\]
is equal to
\[
q\,\Xi^{a_\pi+3}_{2,2}(\eta_{0,a_\pi+1,a_\pi+3}\circ\theta(v_\pi);X_1,X_2)
+
\Xi^{a_\pi+3}_{2,2}(\eta_{\epsilon_1+\epsilon_2,a_\pi+1,a_\pi+3}\circ\theta(v_\pi);X_1,X_2).
\]
Since $\Xi^{a_\pi+3}_{2,2}$ is injective on $\cV_\pi^{K_{2,a_\pi+3}}$, this implies 
\[
\eta_{\epsilon_1,a_\pi+1,a_\pi+3}\circ\theta'(v_\pi)
=
q\,\eta_{0,a_\pi+1,a_\pi+3}\circ\theta(v_\pi)
+
\eta_{\epsilon_1+\epsilon_2,a_\pi+1,a_\pi+3}\circ\theta(v_\pi).
\]
This shows that the subsets $\cB'_{\pi,m}$ may not be linearly independent when $m$ and $a_\pi$ have the opposite 
parity.\\ 

Next we compare $\cB_{\pi,m}$ (when $n=2$) with those given by \eqref{E:oldform}. We show that they are different 
in general. In fact, this already happens when $m=a_\pi+2$. Suppose that $\Lambda_{\pi,\psi}(v_\pi)=1$. 
Then under $\Xi^{a_\pi+2}_{2,2}$, the basis for $\cV_\pi^{K_{2,a_\pi+2}}$ given by \eqref{E:oldform} turns to
\[
\stt{qX_1X_2,\,\,q^2(X_1+X_2)(1+X_1X_2),\,\,q^2(1+2X_1X_2+X_1^2X_2^2),\,\,q^2(X^2_1+2X_1X_2+X_2^2)}
\]
by \eqref{E:raising theta formal}, \eqref{E:raising theta' formal}, \eqref{E:main identity} and \lmref{L:eta action}.
On the other hand, under $\Xi^{a_\pi+2}_{2,2}$, the subset $\cB_{\pi,a_{\pi+2}}$ becomes
\[
\stt{qX_1X_2,\,\,q^2(X_1+X_2)(1+X_1X_2),\,\,q^2(1+X_1X_2+X_1^2X_2^2),\,\,q^2(X_1^2+X_1X_2+X_2^2)}
\]
by \eqref{E:epsilon_1}, \eqref{E:epsilon_1+2}, \eqref{E:epsilon_1-2} and \lmref{L:eta action}. Since $\Xi^{a_\pi+2}_{2,2}$
is injective on $\cV_\pi^{K_{2,a_\pi+2}}$, our claim follows.\\

Finally, we remark that our $\cB_{\pi,m}$ are different from that of Casselman implicitly given in the proof
of \cite[Theorem 1]{Casselman1973} when $m$ and $a_\pi$ have the opposite parity.

\subsection*{Acknowledgements}
Part of this works was done during the author's visit to the Mathematics Department at National University of Singapore. 
He thanks the Mathematics Department for offering office space and access to computing and internet services.
The author would like to thank Wee Teck Gan for his hospitality during the author's visit and for drawing 
the author's attention to Tasi's thesis as well as various helpful suggestions. Thanks also due to Huanchen Bao, Rui Chen,
Chuijia Wang, Xiaolei Wan, Jialiang Zou and Lei Zhang for their hospitality and helps during the author's stay.   
The author also like to thank Ming-Lun Hsieh for his encouragement and Shih-Yu Chen for the discussions. 
Finally, the author would like the thank the referee, who carefully read the original manuscript and provided lots of 
helpful comments and suggestions. This work is partially supported by the Postdoctoral Research Abroad Program of 
Ministry of Science and Technology with the grant number 108-2917-I-564-008.

\bibliographystyle{alpha}
\bibliography{ref}
\end{document}